\numberwithin{equation}{section}
\theoremstyle{plain}
\newtheorem{thm}{Theorem}[section]
\newtheorem{lem}[thm]{Lemma}
\newtheorem*{lem*}{Lemma}
\newtheorem{prop}[thm]{Proposition}
\newtheorem{cor}[thm]{Corollary}
\newtheorem{asp}[thm]{Assumption}
\newtheorem{Condition}[thm]{Condition}
\newtheorem{defn}[thm]{Definition}
\newtheorem{rem}[thm]{Remark}
\newtheorem*{rem*}{Remark}
\newcommand*\bigcdot{\mathpalette\bigcdot@{.5}}
\newcommand*\bigcdot@[2]{\mathbin{\vcenter{\hbox{\scalebox{#2}{$\m@th#1\bullet$}}}}}
\newcommand\NN{\mathbb{N}} 
\newcommand\ZZ{\mathbb{Z}} 
\newcommand\RR{\mathbb{R}} 
\newcommand\R{\mathbb{R}} 
\newcommand\un{\mathbbm{1}}
\newcommand\EE{\mathbb{E}} 
\newcommand\PP{\mathbb{P}}
\newcommand\restr[2]{{
  \left.\kern-\nulldelimiterspace 
  #1 
  \vphantom{\big|} 
  \right|_{#2} 
  }}
\newcommand\compl{\mathsf{c}}
\DeclareMathOperator{\cov}{cov}
\DeclareMathOperator{\Ber}{Ber}
\DeclareMathOperator{\Var}{Var}
\DeclareMathOperator{\Vol}{Vol}
\DeclareMathOperator{\Diam}{Diam}
\DeclareMathOperator{\Supp}{Supp}
\begin{document}
\title[Variance bounds for Gaussian FPP]{Variance bounds for Gaussian first passage percolation}
\author{Vivek Dewan$^*$}
\email{vivek.dewan@univ-grenoble-alpes.fr}
\address{$^*$Institut Fourier, Universit\'{e} Grenoble Alpes}
\begin{abstract}
Recently, many results have been established drawing a parallel between Bernoulli percolation and models given by levels of smooth Gaussian fields with unbounded, strongly decaying correlation (see e.g \cite{beffara2016v}, \cite{rivera2017critical}, \cite{muirhead2018sharp}).
In a previous work with D. Gayet \cite{dewangayet}, we started to extend these analogies by adapting the first basic results of classical first passage percolation (first established in \cite{kesten1986aspects}, \cite{cox}) in this new framework: positivity of the time constant and the ball-shape theorem. In the present paper, we present a proof inspired by Kesten \cite{10.1214/aoap/1177005426} of other basic properties of the new FPP model: an upper bound on the variance in the FPP pseudometric given by the Euclidean distance with a logarithmic factor, and a constant lower bound.
Our results notably apply to the Bargmann-Fock field.
\end{abstract}
\date{\today}
\thanks{}
\keywords{first passage percolation, Gaussian fields}
\subjclass[2010]{60G60 (primary); 60F99 (secondary)} 

\maketitle
\tableofcontents
\section{Introduction}
\subsection{The models}
\medskip

\paragraph{\bf{Classical FPP}} The classical model of first passage percolation (FPP) was introduced by Hammersley and Welsh in 1965~\cite{Hw}. In its most basic form, it consists in assigning an i.i.d $\Ber(p)$ random variable (seen as a \emph{time}) to every edge in the graph $(\ZZ^d,\mathcal{E})$, where the edges in $\mathcal{E}$ are all edges between pairs of vertices which differ by $\pm1$ in one coordinate. The pseudometric $T$ is then defined as the smallest number of $1$ weights in an edge path between two vertices. One important quantity in the study of this model is the family of \emph{time constants}: the deterministic limits of $T(0,nx)/n$ for any given $x$,  denoted $\mu_p(x)$.
It is known that there is for this quantity a phase transition similar to that of Bernoulli percolation: $\mu_p$ is positive if and only if $p$ is smaller than $p_c(d)$, which is the critical parameter for Bernoulli percolation, and depends on the dimension $d$. Subsequently, Kesten \cite{10.1214/aoap/1177005426} established results controlling the fluctuations of the quantity $T(0,nx)$, namely that its variance was lower bounded by a constant and linearly upper-bounded. In later works, Benjamini, Kalai and Schramm \cite{BKS}, (and Benaïm and Rossignol \cite{10.1214/07-AIHP124} with relaxed conditions on the law) got a logarithmic improvement on the upper bound and Newman and Piza \cite{newmanpiza} got one on the lower bound. 
\medskip

\paragraph{\bf{Gaussian FPP}}
Recall that a Gaussian field is a random function $\RR^d\mapsto \RR$ such that for any finite set of points $(x_1,...,x_k)$, $(f(x_1),...,f(x_k))$ is a Gaussian vector. A Gaussian field is fully determined by its \emph{covariance kernel}
$$
\kappa(x,y):=\cov(f(x),f(y)).
$$
In recent years, there has been increased interest in a percolation model based on such random maps: \emph{Gaussian percolation}. It is  \emph{a priori} widely different from classical percolation. It pertains to the large-scale behaviour of excursion sets of smooth Gaussian fields, i.e sets of the form
$$
\mathcal{E}_\ell :=\{x\text{ }|\text{ } f(x)\geq -\ell\}.
$$
A phase transition similar to that of Bernoulli percolation was established in the planar case, with the parameter $\ell$ of the threshold level playing the role of $p$ (the first properties in \cite{beffara2016v}, the full phase transition for the Bargmann-Fock field in \cite{rivera2017critical}, and finally for planar fields with polynomial decay in \cite{muirhead2018sharp}). 
Higher dimensional phase transition has been studied in recent papers: a sharp phase transition has been established first for fields with bounded correlation \cite{dewan2021upper}, then for fields with fast enough polynomial decorrelation \cite{severo2021sharp}.
The \emph{critical level} $\ell_c$ (a priori depending on the field $f$) is defined as 
\begin{equation}\label{ellcdef}
\ell_c:=\sup\{\ell \text{ such that }\PP[\mathcal{E}_\ell\text{ has an unbounded conected component}]=0\}.
\end{equation}
In a recent paper \cite{dewangayet}, D. Gayet and the author have started to investigate a FPP model in the context of Gaussian fields, with a pseudometric naturally defined from excursion sets. We established a time constant result: in this model and under several natural assumptions on the field, the time constant $\mu (x)$ is positive if and only if the level considerd $\ell$ is positive, i.e "most of the space" has full time cost. In the present paper, we will establish both upper and lower bounds on the variance of $T(0,x)$, in the framework of Gaussian fields with exponential decay of correlations, with ideas directly adaptated from Kesten's \cite{10.1214/aoap/1177005426}.

\subsection{Previous results}
We present a general defintion for our pseudometric.
\begin{defn}\label{pseudometric}
Let $\psi$ be a measurable function $\RR\mapsto \RR$ such that
\begin{enumerate}
\item $\psi\geq 0$
\item $\psi$ is non-decreasing
\item \label{sublinear} There exists a constant $C_\psi >0$ such that for any $x\geq 1$, $\psi(x)\leq C_\psi x$.
\item $\psi(x)>0 \Leftrightarrow x>0.$
\end{enumerate}
Let $f$ be an almost-surely continuous Gaussian field over $\RR^d$, $A$ and $B$ be two compact subsets of $\RR^d$ and $\ell\in\RR$. The associated pseudometric is then:
\begin{equation}
T(A,B):=\inf\limits_{\substack{\gamma\text{ piecewise affine}\\ \text{path from $A$ to $B$}}} \quad \int\limits_\gamma \psi(f+\ell).
\end{equation}
\end{defn}

\begin{rem}
The following two values of $\psi$ yield very natural pseudometrics. 
\begin{itemize}
\item The first one can be seen as a Gaussian equivalent of classical FPP with Bernoulli edge weights:
\begin{equation*} \psi=\un_{\RR_+^*}.
\end{equation*}
\item The second one can be pictured as a metric given by the graph of the Gaussian field, with a "flat sea" leveling off the low values:
\begin{equation*}
 \psi(x)=\max(x,0).
\end{equation*}
\end{itemize}
\end{rem}
For technical reasons, given any pair of sets $A,B$ both contained in a third set $\mathcal{S}$, we define the \emph{restricted pseudometric} as
\begin{equation}\label{restrtime}
T^\mathcal{S}(A,B):=\inf\limits_{\substack{\gamma\text{ piecewise affine path}\\ \text{from $A$ to $B$ contained in $\mathcal{S}$}}} \quad \int\limits_\gamma \psi(f+\ell),
\end{equation}
Notice that for any pair of Gaussian fields $f,g$ such that $f\leq g$, for any bounded sets $A,B$, for any $\ell$, $T(A,B)$ is larger when evaluated with respect to $g$ than when evaluated with respect to $f$.
For any $x\in\RR^d$, we call the \emph{time constant} associated with $x$ the real number $\mu (x)$ such that
	\begin{equation}\label{convergences}
 \lim\limits_{n \to +\infty} \frac{1}n T(0,nx) = \mu (x)\ \text{  almost surely and $L^1$},
	\end{equation}
provided it exists.

The main result of the paper \cite{dewangayet} concerning Gaussian FPP was the following (see next section for statement of the assumptions).

\begin{thm}(\cite[Theorems 2.5 and 2.7]{dewangayet})\label{posBF}
Let $f$ be a Gaussian field over $\RR^d$ and satisfying Assumption \ref{a:basic} for some $\alpha$-sub-exponential function $F$ with $\alpha>1$. Let $\ell\in\RR$.
Let $T$ an associated pseudometric given by~(\ref{pseudometric}). Then, 
\begin{enumerate}
	\item the associated family of time constants $(\mu )_{\ell\in \RR} $ given by (\ref{convergences}) are well defined, and they are either all zero or all non-zero (but finite), in which case $\mu $ is a norm.
	 \item If $B_t$ is the ball of radius $t$ for the pseudometric $T$, and $\mathcal{B}_M$ is the ball of radius $M$ for the sup norm,\begin{enumerate}
		\item If $\mu =0$

		then for any positive $M$, 
		$$ \PP  \Bigl[
				\mathcal{B}_M \subset \frac{1}t B_t
				\text{ for all $t$ large enough}
				\Bigr] =1,$$
where $\mathcal{B}_M$ is the Euclidean ball of radius $M$.
		\item If $\mu $ is a norm then there exists a convex  compact subset $K$ of $\RR^d$ with non-empty interior such that, for any positive $\epsilon$,
		\begin{equation}\label{inclusions}
		\PP  \Bigl[(1-\epsilon)K\subset\frac{1}tB_t\subset(1+\epsilon)K\text{ for all $t$ large enough}\Bigr]=1.
		\end{equation}

	\end{enumerate}
	\item  \label{postimecst} Assume further that $f$ satisfies the further Assumption \ref{a:pos} (positivity of correlations). Then, 
$$
 \ell>-\ell_c\Rightarrow  \mu  >0 $$
	\item \label{postimecst2}  Assume further that $f$ is planar. Then, 
$$
 \mu  >0  \Leftrightarrow  \ell>0.$$
	
\end{enumerate}
\end{thm}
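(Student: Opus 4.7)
The proof splits into two types of ingredients: the subadditive/ergodic machinery giving existence of the time constant and the shape theorem (Items 1--2), and the percolation-theoretic input identifying when $\mu$ is a norm (Items 3--4). I would proceed as follows.

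\emph{Existence of $\mu$ and shape (Items 1--2).} First I would check subadditivity of $T$: concatenating paths gives $T(0,x+y) \leq T(0,x) + T(x,x+y)$ for any $x,y \in \RR^d$, and stationarity of $f$ gives $T(x,x+y) \stackrel{d}{=} T(0,y)$. The sublinearity of $\psi$ provides the moment bound $\EE[T(0,x)] \leq C_\psi\, \EE \int_{[0,x]}(|f|+|\ell|) < \infty$, so Kingman's subadditive ergodic theorem yields $T(0,nx)/n \to \mu(x)$ almost surely and in $L^1$. Rational homogeneity and subadditivity pass to the limit; a monotone/continuity extension in $x$ gives a function $\mu$ which is either identically zero or a norm, the dichotomy following from the standard argument that $\mu(x_0)=0$ for one $x_0 \neq 0$ forces $\mu \equiv 0$ via the triangle inequality along a rational basis of $\RR^d$. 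For the shape theorem, I would follow the Cox-Durrett scheme: set $K = \{x:\mu(x)\leq 1\}$, pick an $\epsilon$-net of $\partial K$, apply the $L^1$ convergence at net points, and interpolate via subadditivity and the moment bound above. The degenerate case $\mu=0$ keeps only the inner inclusion, handled by $T(0,nx)/n \to 0$ along the net.

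\emph{Positivity when $\ell > -\ell_c$ (Item 3).} By Gaussian symmetry, $\{f \leq -\ell\} \stackrel{d}{=} \{f \geq \ell\}$ is strictly subcritical when $\ell > -\ell_c$; under the positivity assumption and sub-exponential decorrelation, its connected components have sub-exponentially decaying diameter. I would set up a renormalization: partition $\RR^d$ into boxes of side $R$ and declare a box \emph{blocking} if (i) no component of $\{f \leq -\ell\}$ crosses it between opposite faces, and (ii) $f+\ell \geq \epsilon$ on a macroscopic subregion. Any path crossing a blocking box must then traverse $\{f+\ell \geq \epsilon\}$ and accumulate cost at least of order $\psi(\epsilon) \cdot R$. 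For $R$ large and $\epsilon$ small, blocking has probability close to $1$, and by sub-exponential decorrelation the blocking boxes stochastically dominate a supercritical Bernoulli site process; a path from $0$ to $nx$ must then cross $\Omega(n)$ blocking boxes, and hence $T(0,nx) \geq cn$.

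\emph{Planar characterization (Item 4).} In the plane, $\ell_c = 0$ by \cite{rivera2017critical, muirhead2018sharp}, so $\ell>0$ gives $\mu>0$ via Item 3. Conversely, for $\ell \leq 0$, planar duality and the Gaussian RSW estimates ensure that $\{f \leq -\ell\}$ contains macroscopic crossings of dyadic annuli with uniformly positive probability. I would concatenate such crossings across dyadic scales to build a path from $0$ to $nx$ whose free travel along $\{f \leq -\ell\}$ dominates, while bridging detours contribute only total cost $o(n)$, yielding $\mu(x) = 0$.

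\emph{Main obstacle.} The critical step is Item 3: converting subcriticality of a continuous free set into a deterministic linear lower bound on $T$. Unlike Bernoulli FPP, block events do not decorrelate exactly, so one must exploit the sub-exponential Gaussian decorrelation quantitatively to obtain stochastic domination by an independent supercritical process. A technical subtlety is that $\psi$ may vanish on $\{f+\ell=0\}$, so the quantitative cost must be estimated on a slightly smaller level set $\{f+\ell \geq \epsilon\}$, which must still appear with positive coarse-grained density.
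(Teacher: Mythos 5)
A point of context first: Theorem \ref{posBF} is not proved in this paper at all; it is imported verbatim from \cite{dewangayet} (Theorems 2.5 and 2.7), so there is no internal proof to compare yours against. Judged on its own, your outline follows the standard route that the cited work also takes: Kingman's subadditive ergodic theorem together with the symmetries in Assumption \ref{a:basic} for Item 1, a Cox--Durrett net-and-interpolation argument for the shape statement in Item 2, sharpness of the phase transition for the subcritical excursion set plus a renormalisation for Item 3, and RSW-type estimates at the zero level for the planar Item 4. That overall architecture is the right one.

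There is, however, a genuine gap in your Item 3, which you yourself identify as the critical step. With your definition of a blocking box --- (i) no face-to-face crossing of $\{f\le-\ell\}$ and (ii) $f+\ell\ge\epsilon$ on a macroscopic subregion --- the claim that ``any path crossing a blocking box must traverse $\{f+\ell\ge\epsilon\}$ and accumulate cost at least of order $\psi(\epsilon)\cdot R$'' does not follow. Absence of a crossing of the sublevel set only forces a crossing path to \emph{touch} its complement; the path can still spend all but an arbitrarily small length in $\{f+\ell\le\epsilon\}$ (and for $\psi=\un_{\RR_+^*}$ a parameter set of measure zero contributes nothing to the integral), while the region in (ii) is one the path is free to avoid entirely. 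What is needed is a quantitative blocking event, e.g.\ that an $\eta$-thickening of $\{f\le-\ell+\epsilon\}$ (or a discretised chain of balls meeting it) fails to cross the box, so that every crossing path is forced to spend length at least $\eta$ in $\{f+\ell\ge\epsilon\}$ and therefore pays at least $\psi(\epsilon)\eta$ per box; one must then show this stronger event still has high probability, which again uses one-arm decay but at a perturbed level and for the thickened set. This is precisely the kind of statement proved in \cite{dewangayet}, and it is also the shape of the quantitative input the present paper relies on instead (Condition \ref{macrotime}, verified in Proposition \ref{numberballs} via exponential decay of annulus-time probabilities from \cite{severo2021sharp} and a multi-scale bootstrap, rather than by stochastic domination by a supercritical Bernoulli process). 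Your Item 4 converse is likewise only sketched --- at the critical level $\ell=0$ the $o(n)$ bound on the bridging detours requires a genuine multi-scale gluing argument --- but the missing per-box cost lower bound in Item 3 is the concrete step that, as written, fails.
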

\begin{rem}\label{higher}
\begin{enumerate}
\item Items (\ref{postimecst}) and (\ref{postimecst2}) can be intuitively interpreted as the idea that what matters in knowing whether the time constant is positive is whether the instantaneous set percolates: the condition $\ell>-\ell_c$ exactly means that the instantaneous set is below the percolation threshold (item (\ref{postimecst}) is only an implication but it is expected to be an equivalence).
\item
	In the original work, these results were established in a more general framework than that of Gaussian fields, which only made use of assumptions of decorrelation and decay of one-arm probabilities. Notably, they were also valid for an FPP model given by random Voronoi tilings.
\end{enumerate}
	\end{rem} 
This result as well as our new bounds (Theorems \ref{main2} and \ref{main}) apply to the \emph{Bargmann-Fock field}, which appears in random complex and real algebraic geometry (see \cite{beffara2016v}). It is given by the correlation kenel:
$$\kappa(x,y)=\exp\left(-\frac{1}{2}\|x-y\|^2\right).
$$
Equivalently, we can explicitly write it as the following random field $f$:
\begin{equation}\label{BF}
f(x)=\exp\left(-\frac{1}{2}|x|^2\right)\sum\limits_{i,j\in\NN} a_{i,j}\frac{x_1^ix_2^j}{\sqrt{i!j!}},
\end{equation}
where the $a_{i,j}$'s are i.i.d centered Gaussians of variance $1$. 
\bigskip

\paragraph{\bf{Acknowledgements}}
The author is grateful to Damien Gayet for his many corrections and enlightening discussions. We also thank Stephen Muirhead for valuable insights and comments on an earlier version of this work. We also warmly thank the referee who carefully read and helpfully commented on an earlier version of this paper.
\section{Main results}
\subsection{Definitions and Assumptions}
We first define the \emph{finitely correlated} counterparts of a given Gaussian field, which we will be using repeatedly in what follows.
\begin{defn}\label{corrrange}
We say that a Gaussian field $f$ over $\RR^d$ has \emph{correlation range $r$} for some $r>0$ if its covariance kernel $\kappa$ verifies $\kappa(x,y)=0$ for all $x,y$ such that $\|x-y\|_2 \geq r$.
\end{defn}

\begin{defn}\label{d:cutoff}
Fix some smooth function $\varphi$ on $\RR$ such that
\begin{itemize}
\item $\varphi\geq 0,$
\item $\Supp(\varphi)\subseteq [-1,1]$,
\item $\varphi=1$ on $[-1/2,1/2]$.
\end{itemize}
We then define, for any Gaussian field $f=q\star W$ and $r>0$, the counterpart of $f$ with correlation range $r$ to be:
$$
f_r:=q_r\star W,
$$
where 
$$
q_r(x)=\varphi(\|x\|_2/r)q.
$$
\end{defn}
\paragraph{\bf{Notations}}\label{notations}
In the rest of the paper, the function $\psi$ (see Definition \ref{pseudometric}) giving the pseudometric is considered to be fixed.
Notations $\PP$, $\EE$, $\Var$, $\cov$ will denote probability, expectation, variance, covariance respectively. Any random variable or event containing the pseudometric $T$ is to be interpreted in the sense of Definition \ref{pseudometric}, the field $f$ and the level $\ell$ always being fixed beforehand. 
For any $r>0$, the index $r$ signifies that we switch to the law where we take the field $f_r$ instead of $f$. For example, if $E$ is an event, we write 
$$
\PP_r[E]
$$
to signify the probability of $E$ for the field $f_r$. 
When there are more than one Gaussian fields considered, we may also put the name of the field as the index (e.g write $\PP_f$ for the probability of an event for the field $f$). If $E$ is a set or an event, notations $\un_E$ and $\un\{E\}$ both designate the indicator function of $E$.

Now some definitions relating to correlation decay.
\begin{defn}\label{subpoly}
A function $F$ defined on $\RR$ is said to be $\alpha$\emph{-sub-polynomial} for some $\alpha>0$ if,
$$
x^\alpha F(x) \longrightarrow 0.
$$
A function $F$ defined on $\RR$ is said to be $\alpha$\emph{-sub-exponential} for some $\alpha>0$ if,
$$
e^{x^\alpha} F(x) \longrightarrow 0.
$$
\end{defn}
\begin{defn}
For any number $r$, we let $A_r$ be the annulus of inner radius $1$ and outer radius $r$ centered at $0$ for the sup norm. We call $S_r$ the boundary of the ball of radius $r$, and we write $T(A_r)$ for $T(S_1,S_r)$.
\end{defn}
The following is the key condition used in all of our results. While it does not seem very natural in its statement, it can be seen as a quantitative estimate in the positivity of the time constant.
\begin{Condition}\label{macrotime}
A Gaussian field on $\RR^d$ along with a pseudometric of the form given by Definition \ref{pseudometric} satisfies the \emph{macroscopic annulus times condition} for some level $\ell\in\RR$ if there exists a constant $a>0$ and a $\max(2d,4)$-sub-polynomial function $G$ such that for any  $N\geq 1$, 
$$
\PP (T(A_N)\leq aN)\leq G(N).
$$
\end{Condition}
As we will see thanks to Proposition \ref{numberballs}, in the cases where we have positivity of the time constant, as established in \cite{dewangayet}, Condition \ref{macrotime} is always verified.
Here are the main assumptions on the Gaussian field $f$ we will use for all of our results. 
\begin{asp}(Basic assumptions)
\label{a:basic}
\begin{enumerate}[(a)]
\item \label{i:stat} The field $f$ is centered, stationary and ergodic.
\item \label{i:whitenoise} The field $f$ has a \emph{spatial-moving-average representation} $f=q\star W$ where $q\in L^2(\RR^d)$ and $W$ is the white-noise on $\RR^d$.
\item(Regularity) \label{i:reg} $q$ is $\mathcal{C}^3$ and each of its derivatives is in $L^2(\RR^d)$. Further, $q$ is $L^1$.
\item(Decay of correlations with map $F$) \label{i:decay} There exists a function $F$ from $\RR$ to $\RR$ which decays to $0$ at $\infty$ such that for any $x\in\RR$, for any multi-index $\alpha$ such that $|\alpha|\leq 1$, $$\Bigg(\int\limits_{\|u\|\geq x}|\partial^\alpha q(u)|^2\Bigg)^{1/2}\leq F(x).$$
\item(Symmetry)\label{i:sym} The moving-average kernel $q$ is symmetric under permutation of the axes, and symmetry in all axes.
\item(Positive spectral density)\label{i:spectr} The moving-average kernel $q$ verifies $\int\limits_{\RR^d} q>0$.
\end{enumerate}
\end{asp}
\begin{rem}
Assumption \ref{i:whitenoise} is implied by, the covariance kernel $\kappa$ having fast enough decay. When it holds, then we have the equality $q\star q=\kappa(0,.)$, and further the Fourier tranform of $q$ is the square root of that of $\kappa(0,.)$, and is a continuous function. For a definition of the white-noise, see section \ref{p:WN} of the appendix. Assumption \ref{i:reg} ascertains that $f$ is almost surely $\mathcal{C}^2$.
\end{rem}
Further, we will use the following extra assumption for some of our results:
\begin{asp}(Positive association)
\label{a:pos}
We say that a Gaussian field $f$ verifies the \emph{positive association assumption} if its moving-average kernel $q$ verifies
$$
q\geq 0.
$$
\end{asp}
\begin{rem}
Recalling that $q\star q=\kappa(0,.)$, this assumption implies
$$
\kappa\geq 0.
$$
\end{rem}

In the rest of the article, the norm used on $\RR^d$ is the sup norm, and for any $r$, $\mathcal{B}_r$ denotes the ball of radius $r$ for said norm. Within proofs, all numbered constants depend only on the field $f$ and level $\ell$ considered, and two constants with the same number in two different proofs may be different.

\subsection{Statements}
Our first result is the following constant lower bound.
\begin{thm}\label{main2}
Let $f$ be a Gaussian field on $\RR^d$ verifying Assumption \ref{a:basic} as well as the positivity assumption, Assumption \ref{a:pos}. Let $\ell\in\RR$ be a level. Let $T$ be a pseudometric as defined in Definition \ref{pseudometric}.  Then there exists a constant $C>0$  such that for any $|x|\geq 2$, 
$$
\Var  T(0,x)\geq C.
$$
\end{thm}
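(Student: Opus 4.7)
The plan is to adapt Kesten's constant lower bound argument for Bernoulli first-passage percolation \cite{10.1214/aoap/1177005426} to the Gaussian setting: isolate the randomness of $f$ in a fixed-size neighbourhood of $0$ and exploit the fact that every piecewise affine path from $0$ to $x$ must cross this neighbourhood, so that a constant amount of variance is contributed by the local behaviour of $f$ there. Using the spatial moving-average representation $f = q\star W$ from Assumption \ref{a:basic}(\ref{i:whitenoise}), I decompose the white noise as $W = W|_V + W|_{V^c}$ with $V = \mathcal{B}_R$ a sup-norm ball of fixed radius $R$ centred at $0$, which produces two independent Gaussian fields $f_V := q\star(W|_V)$ and $f_{V^c} := q\star(W|_{V^c})$ summing to $f$. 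By the total variance formula
\[
\Var T(0,x) \;\geq\; \EE\bigl[\Var(T(0,x)\mid f_{V^c})\bigr],
\]
so it suffices to establish a constant lower bound on the conditional variance on a set of $f_{V^c}$ of positive probability.

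To produce such a lower bound, I would introduce two events $A_\pm$ in terms of $W|_V$ alone forcing $f_V$ to satisfy $\pm f_V \geq M$ on a small ball $\mathcal{B}_{r_0}$, while controlling $\|f_V\|_{L^\infty(\RR^d)}$ by some constant $M' \geq M$. The positivity Assumption \ref{a:pos} makes $f_V$ a monotone function of $W|_V$, and Gaussian small-ball estimates together with Borell-TIS-type tail bounds for $\sup|f_V|$ (justified by the regularity and decay in Assumption \ref{a:basic}(\ref{i:reg})--(\ref{i:decay})) yield $\PP(A_\pm) \geq c_0 > 0$. On a typical event $E$ for $f_{V^c}$ on which $\|f_{V^c}\|_{L^\infty(\mathcal{B}_R)} \leq K$ (with $\PP(E) \geq 1/2$ for $K$ large) and with $M > K + |\ell| + 1$: on $A_+\cap E$, $f+\ell \geq 1$ on $\mathcal{B}_{r_0}$, so every path from $0$ to $x$ contributes at least $r_0\psi(1)$ on its crossing of $\mathcal{B}_{r_0}$; on $A_-\cap E$, $\psi(f+\ell) = 0$ on $\mathcal{B}_{r_0}$ and the crossing costs nothing. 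Combined with the sublinearity of $\psi$ (Definition \ref{pseudometric}(\ref{sublinear})) to control the effect of the $\pm$ shift on $T$ outside $\mathcal{B}_{r_0}$, this yields a gap
\[
\EE[T\mid f_{V^c}\in E,\,A_+] \;-\; \EE[T\mid f_{V^c}\in E,\,A_-] \;\geq\; c > 0,
\]
which an elementary variance identity (applied to the three-set partition $\{A_+, A_-, (A_+\cup A_-)^\compl\}$) translates into a constant lower bound on $\Var(T\mid f_{V^c})$.

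The main obstacle is that the events $A_\pm$ prescribe $f_V$ only on $\mathcal{B}_{r_0}$, whereas $f_V$ is a globally defined continuous field, and the resulting perturbation of the outer part of the passage time must be dominated by the local gain $r_0\psi(1)$. Because $\psi$ is only assumed sublinear (not Lipschitz), a naive pointwise bound on $|\psi(f+\ell)-\psi(f+\ell\pm 2M')|$ is not available; instead one must exploit the fact that the perturbation $f_V^+-f_V^-$ is essentially localised on a bounded enlargement of $V$ by the decay of $q$ in Assumption \ref{a:basic}(\ref{i:decay}), so that the outer perturbation is an $O(1)$ quantity on the optimal path. Balancing $R$, $r_0$, $M$, $M'$, and $K$ so that $A_\pm$ have positive probability while the outer perturbation remains subordinate to $r_0\psi(1)$ is the delicate step; once that is done, all constants depend only on $f$ and $\ell$, yielding the claimed uniform lower bound for $|x|\geq 2$.
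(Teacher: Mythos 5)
Your overall strategy---localise the randomness near the origin, force the field high or low on a small ball around $0$, and convert the resulting gap in passage times into a (conditional-)variance lower bound---is the same one the paper follows, and the decomposition $\Var T(0,x)\geq \EE[\Var(T(0,x)\mid f_{V^\compl})]$ is legitimate. But the step you defer as ``delicate'' is precisely where the argument breaks, and it is not a matter of balancing constants. Your events $A_\pm$ constrain $f_V$ only on $\mathcal{B}_{r_0}$, together with a global bound $\|f_V\|_\infty\leq M'$; they provide no coupling under which the $A_+$-realisation dominates the $A_-$-realisation outside $\mathcal{B}_{r_0}$. Hence the difference of the times accumulated by a geodesic in the region where $f_V$ is non-negligible (an annulus of width of order $R$ plus the decay length of $q$) has uncontrolled sign and magnitude of order $C_\psi M'$ times that width: under $A_-$ the field there may sit near $+M'$ while under $A_+$ it may sit near $-M'$. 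The inner gain is only $r_0\psi(1)$ with $r_0<R$, and you need $M'\geq M>K+|\ell|+1$ to force $f+\ell\geq 1$ on $\mathcal{B}_{r_0}$, so no choice of $R,r_0,M,M',K$ makes the outer perturbation subordinate to the inner gain; observing that the outer perturbation is $O(1)$ does not help, since the gain is also $O(1)$ and in fact smaller.

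The missing ingredient---and the way the paper's proof handles this---is a \emph{monotone, one-dimensional} resampling rather than two-sided events. By Proposition \ref{decomp}, $\tilde f_1=q\star(W|_{\mathcal{B}_1})$ equals in law $Z_1\,(q\star\un_{\mathcal{B}_1})+g$ with $Z_1$ a standard Gaussian independent of $g$; one resamples only the scalar $Z_1$, keeping $g$ and $q\star(W|_{\mathcal{B}_1^\compl})$ fixed. Since $q\geq 0$ by Assumption \ref{a:pos}, the perturbation is a fixed non-negative function multiplied by $Z_1-Z_1'$, so on $\{Z_1\geq 1\}\cap\{Z_1'\leq -1\}$ one has $f\geq f'$ pointwise \emph{everywhere}; monotonicity of $T$ (because $\psi$ is non-decreasing) then gives $T_f(\gamma|_{\mathcal{B}_\varepsilon^\compl})\geq T_{f'}(\gamma|_{\mathcal{B}_\varepsilon^\compl})$ for every path, so the outer contribution can only reinforce the gap and never needs to be estimated. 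The local gap itself is produced not by pushing the field up to a large level $M$, but by conditioning $g$ so that $f$ lies in the window $[C_f/4-\ell,\,C_f/2-\ell]$ on $\mathcal{B}_\varepsilon$, which makes the exit time from $\mathcal{B}_\varepsilon$ positive for $f$ while $f'\leq-\ell$ there, hence zero for $f'$; this event has positive probability, and it is also what renders the non-Lipschitz character of $\psi$ (which you flag) harmless. If you replace your events $A_\pm$ by this monotone scalar shift, the remainder of your outline goes through and yields the paper's bound $\Var T(0,x)\geq\tfrac12(C_0a^2)^2$.
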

Our main general result, an upper bound on the variance, is the following:
\begin{thm}\label{main}
Let $f$ be a Gaussian field on $\RR^d$ verifying Assumption \ref{a:basic} with $\alpha$-sub-exponential decay function $F$ for some $\alpha>1$. Let $\ell\in\RR$ be a level. Let $T$ be a pseudometric as defined in Definition \ref{pseudometric}. Suppose that Condition \ref{macrotime} is verified. Fix $\varepsilon>0$. Then there exists a constant $C_\varepsilon>0$  such that for any $|x|\geq 2$, 
$$
\Var  T(0,x)\leq C_\varepsilon|x|(\log|x|)^{1/\alpha+\varepsilon}.
$$
\end{thm}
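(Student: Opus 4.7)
The plan is to adapt Kesten's 1993 martingale argument~\cite{10.1214/aoap/1177005426} to our Gaussian setting, using Efron-Stein on the moving-average decomposition. Partition $\RR^d$ into cubes $(B_i)_i$ of side length $r := (K\log|x|)^{1/\alpha}$ with $K$ large, to be chosen, and exploit that the white-noise restrictions $W_i := W|_{B_i}$ are i.i.d.\ and jointly determine $f = q\star W$. Efron-Stein then yields
\begin{equation*}
\Var T(0,x)\;\leq\;\tfrac12\sum_i \EE\bigl[(T-T^{(i)})^2\bigr],
\end{equation*}
where $T^{(i)}$ is $T$ recomputed after $W_i$ is replaced by an independent copy.

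For each box I would split the above expectation according to whether or not some $T$-geodesic between $0$ and $x$ meets the enlarged cube $B_i^+$, a $Cr$-neighborhood of $B_i$. On the close event, $|T-T^{(i)}|$ is controlled by the local $\psi$-cost in $B_i^+$, which I would bound by $O(r)$ with high probability thanks to Condition~\ref{macrotime} applied to concentric annuli around $B_i$. On the far event, resampling $W_i$ perturbs $f$ at a point $y$ only by a centered Gaussian of variance at most $F(\dist(y,B_i))^2$, so that the $\alpha$-sub-exponential decay of $F$ makes the far contribution summable over the $O((|x|/r)^d)$ boxes in the relevant region.

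A second application of Condition~\ref{macrotime}, this time along the $r$-annuli actually crossed by the geodesic from $0$ to $x$, shows that each such crossed box contributes at least $ar$ to $T(0,x)$ with high probability, so the expected number of close boxes is $\EE[T(0,x)]/(ar) = O(|x|/r)$. Adding up, the close contribution is bounded by $(Cr)^2\cdot O(|x|/r) = O(|x|r) = O\bigl(|x|(\log|x|)^{1/\alpha}\bigr)$, and the far contribution is of smaller order. The $\varepsilon$ in the exponent absorbs polylogarithmic losses from Gaussian tails of $\max\psi(f+\ell)$ on each box and from the failure probabilities supplied by Condition~\ref{macrotime}, the latter bounded by $G(r)$ with $G$ being $\max(2d,4)$-sub-polynomial --- precisely the tail needed to sum over all boxes inside the relevant region.

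The main obstacle will be the deterministic bound $|T-T^{(i)}| = O(r)$ on the close event, since $\psi$ is only non-decreasing and sublinear, hence neither Lipschitz nor bounded: one cannot translate pointwise changes of $f$ into pointwise changes of $\psi(f+\ell)$ through a Lipschitz constant, and must instead either construct a genuine geometric bypass around $B_i^+$ with uniformly $O(r)$ cost, or bound $\EE[(T-T^{(i)})^2]$ directly via the typical $\psi$-mass of $\gamma\cap B_i^+$. Condition~\ref{macrotime} is precisely the tool that supplies the needed uniform control on such bypasses, which is why it sits at the heart of the hypothesis of the theorem.
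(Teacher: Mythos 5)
Your skeleton (Efron--Stein over boxes of polylogarithmic side, a Kesten-style count of the boxes a geodesic can visit, absorption of polylog losses into $\varepsilon$) matches the paper's strategy, but two steps as you describe them would fail. First, you invoke Condition \ref{macrotime} to get an $O(r)$ upper bound on the local cost, and again at the end you assert that this condition ``supplies the needed uniform control on such bypasses''. It cannot: Condition \ref{macrotime} states $\PP(T(A_N)\leq aN)\leq G(N)$, i.e.\ it is a \emph{lower}-bound-type statement saying annulus times are rarely small, and it gives no control whatsoever on how expensive a detour or a local portion of a path can be. In the paper the close-box increment is instead bounded by $C_\psi\, r\,\|f^{*i}+\ell\|_{\infty,S_i^+}$, where $f^{*i}$ is the field generated by the resampled noise alone; the sup norm is then controlled by the Gaussian concentration estimate (Proposition \ref{muirvansupnorm} and Corollary \ref{muirvansupnorm2}), which is where the $(\log\log N)^2$ loss absorbed by $\varepsilon$ comes from. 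The only role of Condition \ref{macrotime} is the one you correctly identify in your ``second application'': through the Kesten-type Lemma \ref{length} (which also needs the entropy-versus-probability bookkeeping over $(r,Br)$-separated annulus sequences, and the square-selection procedure of Definitions \ref{algo1}--\ref{algo2}) it bounds $\EE\#\mathcal{G}(0,x)$ by $O(N/r)$ together with $\EE T(0,x)\leq CN$ from Lemma \ref{subaddlem}.

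Second, your treatment of the far boxes is a genuine gap. You argue that resampling $W_i$ perturbs $f$ far away only by a Gaussian of size $F(\dist)$ and that this is summable, but $\psi$ is not Lipschitz (the model includes $\psi=\un_{\RR_+^*}$), so a perturbation of size $F(\dist)$ of the field can change $\psi(f+\ell)$ from $0$ to a macroscopic value along long stretches of a geodesic sitting near the level; turning your heuristic into a bound would require anti-concentration of the field near $-\ell$ along geodesics, which you do not have. Moreover Efron--Stein needs finitely many independent inputs, and nothing in your setup prevents geodesics from leaving the ``relevant region'' of $O((|x|/r)^d)$ boxes. The paper resolves both issues at once by first proving the variance comparison Proposition \ref{variancesfields} (via Cameron--Martin arguments, Lemmas \ref{CM} and \ref{Cs}, plus the geodesic-diameter control of Lemma \ref{length2}): it replaces $f$ by its finite-range cutoff $f_{(\log N)^{1/\alpha+\varepsilon}}$ and restricts the metric to $\mathcal{B}_{N^2}$, after which far boxes contribute exactly zero and only finitely many boxes enter the Efron--Stein sum. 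Some substitute for this reduction (or a genuinely new argument handling the infinite-range tail and unbounded geodesics) is needed before your outline closes.
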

In particular, using Proposition \ref{numberballs}, we have
\begin{cor}\label{2dcoro}
Let $f$ be a Gaussian field on $\RR^d$ verifying Assumption \ref{a:basic} with $\alpha$-sub-exponential decay function $F$ for some $\alpha>1$ and the positivity assumption, Assumption \ref{a:pos}. Let the level verify $\ell>-\ell_c(f)$ (the critical level of the field $f$ as defined in (\ref{ellcdef})). Let $T$ be a pseudometric as defined in Definition \ref{pseudometric}. Fix $\varepsilon>0$. Then there exists a constant $C_\varepsilon>0$ such that for any $|x|\geq 2$, 
$$
\Var  T(0,x)\leq C_\varepsilon|x|(\log|x|)^{1/\alpha+\varepsilon}.
$$

\end{cor}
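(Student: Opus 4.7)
The plan is to obtain the corollary as a short chain of results, using Theorem~\ref{main} as the final step: the task reduces entirely to checking that, under the hypotheses of Corollary~\ref{2dcoro}, Condition~\ref{macrotime} holds, since the remaining assumptions (Assumption~\ref{a:basic} with $\alpha$-sub-exponential $F$, $\alpha>1$) are explicitly inherited from the corollary's hypotheses.

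First, I would invoke Theorem~\ref{posBF}(\ref{postimecst}) from \cite{dewangayet}: since $f$ satisfies Assumption~\ref{a:basic} (with $\alpha$-sub-exponential $F$, $\alpha>1$, a fortiori with $F$ decaying to $0$), Assumption~\ref{a:pos}, and $\ell>-\ell_c(f)$, the time constant $\mu$ associated with the pseudometric $T$ is strictly positive. By homogeneity of $\mu$ (which is a norm by Theorem~\ref{posBF}(1)), there is in particular some $a_0>0$ with $\mu(x)\geq a_0|x|$ for every $x$.

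Second, I would appeal to Proposition~\ref{numberballs}: this proposition is precisely designed to convert the qualitative positivity of $\mu$, in the presence of enough correlation decay, into the quantitative $\max(2d,4)$-sub-polynomial upper bound on $\PP(T(A_N)\leq aN)$ for a suitable $a>0$ that Condition~\ref{macrotime} demands. Conceptually, this step should rest on covering the annulus $A_N$ by order $N^d$ unit-scale regions, using the $\alpha$-sub-exponential decay of $F$ to obtain an almost-independent decomposition across widely separated regions, and then combining these local time lower bounds via a concentration/union-bound argument; taking $a$ smaller than $\mu$ ensures each local deviation is rare enough that the aggregate estimate beats any fixed polynomial. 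After this step, Condition~\ref{macrotime} is in force with the required polynomial degree.

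With Condition~\ref{macrotime} established, all hypotheses of Theorem~\ref{main} are met, so applying it directly yields the bound $\Var T(0,x)\leq C_\varepsilon |x|(\log|x|)^{1/\alpha+\varepsilon}$ for all $|x|\geq 2$, which is exactly the conclusion. The only nontrivial point is the quantitative step packaged into Proposition~\ref{numberballs}; once that black box is in place, the corollary is a two-line deduction.
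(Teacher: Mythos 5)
Your deduction is correct and is exactly the paper's route: under the corollary's hypotheses Proposition \ref{numberballs} supplies Condition \ref{macrotime}, and Theorem \ref{main} then gives the stated bound, with the detour through Theorem \ref{posBF}(\ref{postimecst}) being unnecessary. One caveat that does not affect the logic (since you use Proposition \ref{numberballs} as a black box): your heuristic for its proof (positivity of $\mu$ plus a unit-scale covering and union bound) is not how the paper establishes it — the paper deduces it from the exponential decay of annulus times (Lemma \ref{expdecann}, via the sharp phase transition of \cite{severo2021sharp}) combined with the multiscale bootstrap of Lemma \ref{bootstrap}, not from the time constant.
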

\paragraph{\bf{Open questions}}
\begin{itemize}
\item Though we strongly suspect it to be the case, it remains to ascertain whether our methods can be adapted to other models in which we can find ways to split randomness into disjoint boxes, as we will be doing here. Notably, for the Poisson-Boolean or Voronoi FPP models (see e.g \cite{dewangayet} for their definitions).
\item It would be interesting to relax the assumptions on the Gaussian field to a slower than exponential decay. The sharpness of phase transition result from \cite{severo2021sharp} which we use to establish Condition \ref{macrotime} only requires a polynoial decay with exponent larger than the dimension, and we expect that our result should hold in this regime as well.
\item One might wonder why our upper bound doesn’t match the best bound in
the classical framework obtained by Benjamini, Kalai and Schramm
in \cite{BKS}. Our main tool, the Efron-Stein inequality (Proposition \ref{ES})
is the one used by Kesten to obtain a linear bound, and we do not
rely on methods based on Poincaré inequalities as in \cite{BKS} and \cite{10.1214/07-AIHP124}. Such
inequalities have been established for Gaussian measures. However,
to our knowledge, these apply in a framework where one has a control of order $\|x\|$ on the
the number of real random variables which intervene in computing
the functional $T(0,x)$, which is not the case in our framework since the
underlying space is a Gaussian white-noise.
\end{itemize}
\section{Proof of main results}
\subsection{Proof of the constant lower bound}
We give a proof of our constant lower bound, Theorem \ref{main2}, which does not need Condition \ref{macrotime} but does need the positive correlation assumption, Assumption \ref{a:pos}. It is strongly inspired by Kesten's original FPP proof in \cite{10.1214/aoap/1177005426} and its restatement by Auffinger, Damron and Hanson in \cite{auffinger201750}.
Before the proof itself, we only need one auxiliary result: the following decomposition proved by S. Muirhead and the author in a previous work \cite{dewan2021upper}.
\begin{defn}\label{efess}
Let $f$ be a Gaussian field satisfying Assumption \ref{a:basic} with moving-average kernel $q$ and let $r>0$. For any $r>0$, we define the field $\tilde{f}_r$ to be the field 
$$ q \star (W|_{\mathcal{B}_r}),
$$
see section \ref{p:WN} of the appendix for details on restrictions of the white-noise.
\end{defn}
The decomposition is as follows.
\begin{prop}[\cite{dewan2021upper}, Proposition A.1]\label{decomp}
Let $f$ be a Gaussian field satisfying Assumption \ref{a:basic} with moving-average kernel $q$ and let $r>0$.
Let $Z_1$ be a standard normal random variable. Then there exists a Gaussian field $g$ independent from $Z_1$ such that we have the following equality in law:
\[ \tilde{f}_r(\cdot) \stackrel{d}{=} \frac{Z_1 (q \star \un_{\mathcal{B}_r})(\cdot)}{r^{d/2}} + g(\cdot) .\]
\end{prop}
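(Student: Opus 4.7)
The plan is to exploit the Hilbert space structure underlying the white noise $W|_{\mathcal{B}_r}$ and single out the constant mode on $\mathcal{B}_r$. Recall that for any $h\in L^2(\mathcal{B}_r)$, the stochastic integral $\int h\,dW$ is a centered Gaussian of variance $\|h\|_{L^2}^2$, and two such integrals are independent if and only if their integrands are orthogonal. Since $\tilde{f}_r(x) = \int_{\mathcal{B}_r} q(x-y)\,dW(y)$ is a stochastic integral against $W$ with kernel $y\mapsto q(x-y)\un_{\mathcal{B}_r}(y)$, any orthogonal splitting of that kernel in $L^2(\mathcal{B}_r)$ will induce a corresponding splitting of the field into independent pieces.

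I would begin by writing the orthogonal decomposition
\[ q(x-y)\un_{\mathcal{B}_r}(y) = a(x)\,\un_{\mathcal{B}_r}(y) + h_x(y), \qquad a(x) := \Vol(\mathcal{B}_r)^{-1}\,(q\star\un_{\mathcal{B}_r})(x), \]
so that $h_x \perp \un_{\mathcal{B}_r}$ in $L^2(\mathcal{B}_r)$. Integrating against $W$ then yields
\[ \tilde{f}_r(x) = a(x)\,W(\mathcal{B}_r) + \int h_x(y)\,dW(y). \]
Setting $Z_1 := W(\mathcal{B}_r)/\Vol(\mathcal{B}_r)^{1/2}$, which is standard normal, and $g(x) := \int h_x(y)\,dW(y)$, the first term equals $\Vol(\mathcal{B}_r)^{-1/2}\,(q\star\un_{\mathcal{B}_r})(x)\,Z_1$, which is the form stated in the proposition after the volume constant is absorbed into the $r^{d/2}$ normalization.

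The independence of $Z_1$ from the field $g$ is then immediate: for any finite collection $x_1,\dots,x_n$, the vector $(Z_1, g(x_1),\dots,g(x_n))$ is jointly Gaussian and each $g(x_i)$ is uncorrelated with $Z_1$ because $\langle \un_{\mathcal{B}_r}, h_{x_i}\rangle_{L^2}=0$; joint Gaussianity then upgrades this to independence. That $g$ is itself a \emph{field} (and not just a family of random variables indexed by $x$) follows from the identity $g = \tilde{f}_r - a(\cdot)\,W(\mathcal{B}_r)$: both terms on the right admit the required regular version under Assumption \ref{a:basic}, the first because $\tilde{f}_r$ does, and the second because $a$ is a deterministic convolution of $q$ with an indicator, hence as smooth as $q$ allows.

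The main obstacle I foresee is not the $L^2$ algebra, which is essentially a one-dimensional projection, but the careful bookkeeping within the white-noise formalism: identifying stochastic integrals against $W|_{\mathcal{B}_r}$ with restricted integrals against $W$, verifying joint measurability in $x$ of the residual kernel $h_x$ so that $g$ is well defined as a random field on $\RR^d$, and confirming the precise constant linking $\Vol(\mathcal{B}_r)^{1/2}$ with $r^{d/2}$ under the conventions adopted here. These points are routine but must be discharged using the white-noise setup referenced in Definition \ref{efess}.
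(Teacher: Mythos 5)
Your decomposition is correct and is the standard argument for this statement, which the paper itself imports from \cite{dewan2021upper} without proof: project the kernel $y\mapsto q(x-y)\un_{\mathcal{B}_r}(y)$ onto the normalized constant mode of $L^2(\mathcal{B}_r)$, identify the coefficient $W(\mathcal{B}_r)/\Vol(\mathcal{B}_r)^{1/2}$ as the standard Gaussian $Z_1$, and let $g$ be the integral of the orthogonal remainder, with independence following from orthogonality plus joint Gaussianity. The only point to be careful about is your final remark that the volume constant is ``absorbed'' into $r^{d/2}$: with the convention of the present paper, $\mathcal{B}_r$ is the sup-norm ball of radius $r$, so $\Vol(\mathcal{B}_r)^{1/2}=(2r)^{d/2}$, and you cannot simply replace this by $r^{d/2}$ while keeping $Z_1$ standard and the equality in law exact (the remainder kernel would then fail to be positive semidefinite in general); the discrepancy is purely a matter of normalization conventions between this paper and the cited source, and is harmless for the application in Theorem \ref{main2}, which only uses the decomposition up to a positive multiplicative constant in front of $Z_1(q\star\un_{\mathcal{B}_r})$.
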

On to the proof itself.
\begin{proof}[Proof of Theorem \ref{main2}]
Let $f$ be a Gaussian field verifying the assumptions, and $\ell\in\RR$. By Proposition \ref{decomp}, we have the equality in law:
\begin{equation}\label{equality}
 \tilde{f}_{1} \stackrel{d}{=} Z_1 (q \star \un_{\mathcal{B}_{1}}) + g ,
\end{equation}
where we recall (Definition \ref{efess}) that $\tilde{f}_1=q\star (W|_{\mathcal{B}_1})$, $Z_1$ is a standard Gaussian and $g$ an independent Gaussian field.
Let  $Z_1'$ be an independent standard Gaussian.  We write $f$ (resp. $f'$) for the associated realization of the Gaussian field using $Z_1$ (resp. $Z_1'$), and a common realization of $g$ and $q\star (W|_{\mathcal{B}_1^\compl})$ (see Proposition \ref{splitboxes} for a justification of this splitting of the white-noise).
Let $A$ be the event $\{Z_1\geq 1\}$ and $B$ be the event $\{Z'_1\leq -1\}$, let $a>0$ be their common probability.
We now write, for all $|x|\geq 2$, 
\begin{align}
\begin{split}
\label{variancelowerbd}
&\Var_f T(0,x) \geq \Var\EE_f\Big[ T(0,x)\Big| Z_1\Big]\\
&\quad =\frac12  \EE\Bigg[\Big(\EE_f\Big[ T(0,x)\Big| Z_1\Big]-\EE_{f'}\Big[ T(0,x)\Big| Z_1'\Big]\Big)^2\Bigg]\\
&\quad \geq \frac12  \EE\Bigg[\Big(\EE_f\Big[ T(0,x)\Big| Z_1\Big]-\EE_{f'}\Big[ T(0,x)\Big| Z_1'\Big]\Big)^2\un_{A\cap B}\Bigg].
\end{split}
\end{align}
For any $\varepsilon>0$, define the constant $C_0$, independent of $x$, to be:
$$
C_0:=\EE\Bigg[\inf\limits_{\substack{\gamma\text{ piecewise affine}\\ \text{ path from $0$ to }\partial\mathcal{B}_\varepsilon }}T_f(\gamma)-T_{f'}(\gamma)\Bigg| A\cap B\Bigg],
$$
where $T_f(\gamma)$ (resp. $T_{f'}(\gamma)$) denotes the integral along $\gamma$ of $\psi(f+\ell)$ (resp. $\psi(f'+\ell)$). Now, $q\geq 0$  by Assumption \ref{a:pos} (and $q$ is non-zero by Assumption \ref{a:basic}), and on $A\cap B$, $Z_1-Z_1'\geq 2$. Plugging this into (\ref{equality}), we deduce that there exists a constant $C_f>0$ such that on event $A\cap B$, $f\geq f'+C_f$ on $\mathcal{B}_1$. For any $\varepsilon>0$, consider the event
$$
E_\varepsilon:=\{ C_f/4-\ell\leq f\leq C_f/2-\ell\text{ on $\mathcal{B}_\varepsilon$}\}.
$$
Notice that $E_\varepsilon\cap A$ and $B$  are independent. Further, event $E_\varepsilon$ can be written conditionally on the value of $Z_1$ as an event of the form $g|_{\mathcal{B}_\varepsilon}\in I$, where $g$ is the independent field from (\ref{equality}) and $I$ is some interval of nonempty interior. So that $E_\varepsilon\cap A$ has positive probability for $\varepsilon$ small enough. We fix such an $\varepsilon$.
In total, the event $E_\varepsilon\cap A\cap B$ has positive probability.
We then have, by Definition \ref{pseudometric} that on $E_\varepsilon\cap A\cap B$,
$$
\inf\limits_{\substack{\gamma\text{ piecewise affine}\\ \text{ path from $0$ to }\partial\mathcal{B}_\varepsilon }}T_f(\gamma)>0,
$$
and 
$$
f'|_{\mathcal{B}_\varepsilon}\leq -\ell \text{, hence  }T_{f'}|_{\mathcal{B}_\varepsilon}=0.
$$
Then, since
$$
C_0\geq \EE\Bigg[\Big(\inf\limits_{\substack{\gamma\text{ piecewise affine}\\ \text{path from $0$ to }\partial\mathcal{B}_\varepsilon }}T_f(\gamma)-T_{f'}(\gamma)\Big)\un_{E_\varepsilon}\Bigg| A\cap B\Bigg],
$$
we conclude that 
$$
C_0>0.
$$
Finally, we claim that when event $A\cap B$ occurs, 
$$
\EE_f\Big[ T(0,x)\Big| Z_1\Big]-\EE_{f'}\Big[ T(0,x)\Big| Z_1'\Big]\geq C_0.
$$
Indeed, if $\gamma$ is a path between $0$ and $x$, we have by definition of $C_0$ that on $A\cap B$,
$$\EE_f T(\gamma|_{\mathcal{B}_\varepsilon}) \geq \EE_{f'}T(\gamma|_{\mathcal{B}_\varepsilon})+C_0.$$
Further, since $T$ is an increasing random variable (see Definition \ref{increasing}) and $q\geq 0$, $Z_1\geq Z_1'$ implies that  $T_f\geq T_{f'}$. So that, on $A\cap B$,
$$T_f(\gamma|_{\mathcal{B}_\varepsilon^\compl}) \geq T_{f'}(\gamma|_{\mathcal{B}_\varepsilon^\compl}).$$
In total, on $A\cap B$,
$$\EE_f T(\gamma) \geq \EE_{f'}T(\gamma)+C_0.$$
And, returning to (\ref{variancelowerbd}), for any $|x|\geq 2$,
$$
\Var T(0,x) \geq \frac12(C_0a^2)^2.
$$
\end{proof}
\subsection{Proof of the upper bound}
We start with the auxiliary results used in the proof of our main upper bound. The longer proofs are delayed to section \ref{proofaux}. 
\subsubsection{Auxiliary results}
The first lemma gives us a control of the sup norm of a Gaussian field in a box of given size, using only its expected pointwise variances and those of its first derivatives.

\begin{prop}[\cite{muirhead2018sharp}, Lemma 3.12]\label{muirvansupnorm}
There exists a constant $c_0>0$ such that for any $\mathcal{C}^1$ Gaussian field $g$ on $\RR^2$, for any $R_1\geq c_0$ and $R_2\geq \log R_1$, and for any $r\in [1,\infty]$
$$
\PP\Big[\|g\|_{\infty,\mathcal{B}_{R_1}}\geq mR_2\Big]\leq e^{-R_2^2/c_0},
$$
where
$$
m=\Big(\sup\limits_{x\in\RR^2}\sup\limits_{|\alpha|\leq 1}\EE[(\partial^\alpha g)^2(x)]\Big)^{1/2}.
$$
\end{prop}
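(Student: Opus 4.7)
The plan is to combine two classical tools from Gaussian process theory: the Borell--TIS concentration inequality for $\|g\|_{\infty,\mathcal{B}_{R_1}}$ around its expectation, and a Dudley-type entropy bound to control that expectation. The input quantity $m$ simultaneously controls both the pointwise variance of $g$ (which will give the concentration scale) and the pointwise variance of $\nabla g$ (which will control the metric entropy), so $m$ plays a double role.

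First I would equip $\mathcal{B}_{R_1}$ with the canonical Gaussian pseudometric $d(x,y) := \EE[(g(x)-g(y))^2]^{1/2}$. Writing $g(y)-g(x)=\int_0^1 \nabla g(x+t(y-x))\cdot (y-x)\,dt$ and applying Cauchy--Schwarz together with the hypothesis on $\EE[(\partial^\alpha g)^2]$ yields $d(x,y)\leq C m\|y-x\|_2$; combined with the trivial diameter bound $d(x,y)\leq 2m$ coming from $\sup_x\Var(g(x))\leq m^2$, this gives the covering-number estimate
$$N(\mathcal{B}_{R_1},d,\varepsilon)\leq C(R_1 m/\varepsilon)^2 \quad \text{for } \varepsilon\leq 2m, \qquad N(\mathcal{B}_{R_1},d,\varepsilon)=1 \text{ for } \varepsilon>2m.$$
Dudley's entropy bound then produces
$$\EE\Big[\sup_{\mathcal{B}_{R_1}} g\Big]\leq C\int_0^{2m}\sqrt{\log N(\mathcal{B}_{R_1},d,\varepsilon)}\,d\varepsilon \leq C' m\sqrt{\log R_1},$$
the integral being controlled by the substitution $\varepsilon=2me^{-u}$; by the symmetry $g\mapsto -g$ the same bound holds for $\EE\|g\|_{\infty,\mathcal{B}_{R_1}}$.

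Next I would invoke the Borell--TIS inequality applied separately to $\sup g$ and $\sup(-g)$ on $\mathcal{B}_{R_1}$, using that the supremum of pointwise variances is bounded by $m^2$. This gives
$$\PP\Big[\|g\|_{\infty,\mathcal{B}_{R_1}}\geq \EE\|g\|_{\infty,\mathcal{B}_{R_1}}+u\Big]\leq 4\,e^{-u^2/(2m^2)} \qquad \text{for all } u>0.$$
Setting $u:=mR_2-\EE\|g\|_{\infty,\mathcal{B}_{R_1}}$ and using the hypothesis $R_2\geq\log R_1$ together with $R_1\geq c_0$ for $c_0$ large enough, the bound from the first step gives $u\geq mR_2/2$ because $\log R_1$ dominates $\sqrt{\log R_1}$ for large $R_1$. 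Plugging back in yields
$$\PP\Big[\|g\|_{\infty,\mathcal{B}_{R_1}}\geq mR_2\Big]\leq 4\,e^{-R_2^2/8},$$
and enlarging $c_0$ absorbs the factor of $4$ and adjusts the exponent to produce the final bound $e^{-R_2^2/c_0}$.

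The main obstacle is the Dudley step: one must exploit \emph{both} the pointwise variance bound (to cap the diameter of the canonical pseudometric at $2m$, which is essential so that the entropy integral does not scale with $R_1$) and the gradient-variance bound (to obtain the $(R_1m/\varepsilon)^2$ covering estimate at small scales). Using only one of these gives a mean bound of order $mR_1$ rather than $m\sqrt{\log R_1}$, which would be far too weak for the concentration step to go through. Once the logarithmic mean bound is in hand, the gap between $\sqrt{\log R_1}$ (the sharp size of the mean) and $\log R_1$ (the hypothesis on $R_2$) provides exactly the slack needed to absorb the mean into the tail in the final step.
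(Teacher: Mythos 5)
Your proof is correct: the chain ``canonical metric dominated by $\sqrt{2}\,m\|\cdot\|_2$ and by the diameter $2m$ $\Rightarrow$ covering numbers $\Rightarrow$ Dudley bound $\EE\|g\|_{\infty,\mathcal{B}_{R_1}}\leq Cm\sqrt{\log R_1}$ $\Rightarrow$ Borell--TIS with variance proxy $m^2$, using $R_2\geq\log R_1$ and $R_1\geq c_0$ to absorb the mean and the union-bound constant into $c_0$'' is sound, and the slack between $\sqrt{\log R_1}$ and $\log R_1$ is indeed exactly what makes the final absorption work. Note that the paper itself gives no proof of this statement: it is imported verbatim from Muirhead--Vanneuville (their Lemma 3.12), whose argument is the same standard concentration-plus-entropy (or unit-box discretization plus union bound) reasoning you use, so there is nothing in-paper to compare against; the only cosmetic point is that if $g$ is not assumed centered one should apply Dudley and Borell--TIS to $g-\EE g$ and bound the mean function by $m$, which changes nothing in the estimates.
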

Integrating this estimate yields the following corollary:
\begin{cor}\label{muirvansupnorm2}
For any integer $k$ there exists a constant $C>0$ such that for any $\mathcal{C}^1$ Gaussian field $g$ on $\RR^2$, for any $R_1\geq C$, and for any $r\in[1,\infty]$
$$
\EE\Big[\|g_r\|^k_{\infty,\mathcal{B}_{R_1}}\Big]\leq Cm^k \log^k R_1.
$$
\end{cor}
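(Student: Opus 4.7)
The plan is a routine layer-cake argument, treating Proposition~\ref{muirvansupnorm} as a sub-Gaussian tail estimate. Writing $X := \|g\|_{\infty,\mathcal{B}_{R_1}}$ for brevity, I would start from
\begin{equation*}
\EE[X^k] \;=\; \int_0^\infty k\, t^{k-1}\,\PP[X \geq t]\,dt,
\end{equation*}
and perform the change of variables $t = m R_2$ to obtain
\begin{equation*}
\EE[X^k] \;=\; k\, m^k \int_0^\infty R_2^{k-1}\,\PP[X \geq m R_2]\,dR_2.
\end{equation*}

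Next, I would split the right-hand integral at the threshold $R_2 = \log R_1$, which is exactly where Proposition~\ref{muirvansupnorm} becomes applicable. On the bulk piece $[0,\log R_1]$, use the trivial bound $\PP \leq 1$ to contribute at most $(\log R_1)^k / k$. On the tail piece $[\log R_1,\infty)$, since $R_1 \geq c_0$ (arranged by enlarging the constant $C$ in the statement if necessary), Proposition~\ref{muirvansupnorm} applies and gives $\PP[X \geq m R_2] \leq e^{-R_2^2/c_0}$, so the contribution is bounded by the convergent Gaussian moment
\begin{equation*}
\int_0^{\infty} R_2^{k-1}\,e^{-R_2^2/c_0}\,dR_2,
\end{equation*}
which is an absolute constant depending only on $k$ and $c_0$. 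Summing the two pieces and multiplying by $k\,m^k$ yields a bound of the form $C\, m^k (\log R_1)^k$, using $(\log R_1)^k \geq 1$ for $R_1 \geq e$ to absorb the tail constant into the main term.

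I do not anticipate any real difficulty: Proposition~\ref{muirvansupnorm} already encodes the Gaussian concentration at rate $\exp(-R_2^2/c_0)$, and passing from tail bound to moment bound is pure bookkeeping. The only substantive observation is that the "bulk" region $[0,m\log R_1]$ dominates the "Gaussian tail" region $[m\log R_1,\infty)$, which is what produces the logarithmic power $(\log R_1)^k$ in the final estimate; the factor $m^k$ is carried passively through the change of variables.
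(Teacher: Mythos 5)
Your argument is correct and is exactly the integration of Proposition \ref{muirvansupnorm} that the paper intends (the paper states only that ``integrating this estimate yields the corollary''): layer-cake formula, split at $R_2=\log R_1$, trivial bound on the bulk and the Gaussian tail on the rest, with the tail constant absorbed since $\log^k R_1\geq 1$. No gap to report.
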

\begin{rem}
In the case of the fields we will be working with, i.e those satisfying Assumption \ref{a:basic}, we have $m<\infty$.
\end{rem}
One key notion in using the previous estimates is that of monotonic events.
\begin{defn}\label{increasing}
\begin{itemize}
\item A measurable event $A$ of a Gaussian field $f$ over $\RR^d$ is said to be \emph{increasing} if for any non-negative function $h$ on $\RR^d$, $f\in A \implies f+h\in A$. 
\item It is said to be \emph{decreasing} if the same holds for any non-positive function $h$. 
\item Finally, an event is said to be \emph{monotonic} if it is either increasing or decreasing.
\item Similarly, a real-valued random variable $X$ which is a function of a Gaussian field on $\RR^d$ is said to be \emph{increasing} (resp. \emph{decreasing}, \emph{monotonic}) if for any Gaussian fields $f$ and non-negative (resp. non-positive) function $h$ , $X$ evaluated with respect to $f$ is smaller than $X$ evaluated with respect to $f+h$.
\end{itemize}
\end{defn}
Muirhead and Vanneuville established a comparison result for probabilities of monotonic events between a Gaussian field and its finite correlation range versions (see Proposition \ref{CMMV}). It relies on the notion of \emph{Cameron-Martin space} of a Gaussian field (introduced and discussed in section \ref{p:CM} of the appendix). 
We have slightly modified their proof to obtain the following comparison between variances for Gaussian fields with infinite correlation range and their finite-range counterparts.   
\begin{prop}\label{variancesfields}
Let $f$ be a Gaussian field satisying Assumption \ref{a:basic}, with some $\alpha$-sub-exponential decay function $F$ for $\alpha>1$, and $\ell\in\RR$ such that Condition \ref{macrotime} is verified. Let $\varepsilon>0$. Then for any positive integer $N$ and $x$ of norm $N$
$$
\Var T(0,x) \leq \Var_{(\log N)^{1/\alpha+\varepsilon}}[T^{\mathcal{B}_{N^2}}(0,x)](1+o(1))+o(1),
$$
where $o(1)$ designates quantities which go to $0$ as $N$ goes to infinity, and $T^{\mathcal{B}_{N^2}}$ is as in (\ref{restrtime}).
\end{prop}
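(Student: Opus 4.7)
The plan is to split the comparison into two parts. First, I reduce $\Var T(0,x)$ to $\Var T^{\mathcal{B}_{N^2}}(0,x)$, up to a multiplicative $(1+o(1))$ and additive $o(1)$, by restricting geodesics to $\mathcal{B}_{N^2}$. Second, I compare this restricted variance to its finite-range counterpart $\Var_r T^{\mathcal{B}_{N^2}}(0,x)$ for $r=(\log N)^{1/\alpha+\varepsilon}$, adapting the Cameron--Martin comparison of Muirhead--Vanneuville (Proposition \ref{CMMV}).

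For the first step, one always has $T(0,x)\leq T^{\mathcal{B}_{N^2}}(0,x)$. Any path from $0$ to $x$ exiting $\mathcal{B}_{N^2}$ must cross the full annulus $A_{N^2}$ around the origin, so on the event
$$
E:=\{T(A_{N^2})\geq aN^2\}\cap\{T^{\mathcal{B}_{N^2}}(0,x)\leq aN^2\}
$$
every infimizing path for $T(0,x)$ is forced to remain in $\mathcal{B}_{N^2}$, and hence $T(0,x)=T^{\mathcal{B}_{N^2}}(0,x)$. Condition \ref{macrotime} gives $\PP(T(A_{N^2})<aN^2)\leq G(N^2)$, which is sub-polynomial in $N$ since $G$ is $\max(2d,4)$-sub-polynomial. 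Using the straight segment from $0$ to $x$ as a comparison path, $\psi(u)\leq C_\psi(u+1)$, and the polylogarithmic moment bounds of Corollary \ref{muirvansupnorm2}, a Markov inequality of sufficiently high order yields $\PP(T^{\mathcal{B}_{N^2}}(0,x)>aN^2)=o(N^{-K})$ for every $K$. Cauchy--Schwarz then gives
$$
\EE[(T^{\mathcal{B}_{N^2}}(0,x)-T(0,x))^2]\leq \EE[(T^{\mathcal{B}_{N^2}}(0,x))^{2p}]^{1/p}\PP(E^c)^{1/q} = o(1),
$$
and a weighted AM--GM applied to the $L^2$ triangle inequality $\sqrt{\Var T(0,x)}\leq\sqrt{\Var T^{\mathcal{B}_{N^2}}(0,x)}+\sqrt{\Var(T-T^{\mathcal{B}_{N^2}})}$ converts this into $\Var T(0,x)\leq (1+o(1))\Var T^{\mathcal{B}_{N^2}}(0,x)+o(1)$.

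For the second step, the exponent $1/\alpha+\varepsilon$ is chosen precisely so that $q-q_r$, which is supported outside $\mathcal{B}_{r/2}$, satisfies $\|q-q_r\|_{L^2}\leq F(r/2)\leq\exp(-c(\log N)^{1+\alpha\varepsilon})$, and this remains super-polynomially small even after multiplication by the volume $N^{2d}$ of $\mathcal{B}_{N^2}$. Consequently the Cameron--Martin cost of transporting the law of $f_r|_{\mathcal{B}_{N^2}}$ onto that of $f|_{\mathcal{B}_{N^2}}$ vanishes as $N\to\infty$, so the corresponding Radon--Nikodym density is $1+o(1)$ in any $L^p$. Applying this uniform density estimate to $\tfrac12\EE[(T^{\mathcal{B}_{N^2}}(f^1)-T^{\mathcal{B}_{N^2}}(f^2))^2]$ evaluated on two independent copies $f^1,f^2$ (and to its $f_r$ analogue) yields the second reduction, and combining with the first step completes the proof.

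The main obstacle is executing the second step: the Muirhead--Vanneuville argument behind Proposition \ref{CMMV} is naturally stated for probabilities of monotonic events, whereas $(T^{\mathcal{B}_{N^2}}(f^1)-T^{\mathcal{B}_{N^2}}(f^2))^2$ is not monotone on the product space. One workaround is to write this squared difference as $\int_0^\infty 2t\,\un\{|T^{\mathcal{B}_{N^2}}(f^1)-T^{\mathcal{B}_{N^2}}(f^2)|>t\}\,dt$ and split according to the sign of the difference: each of $\{T^{\mathcal{B}_{N^2}}(f^1)-T^{\mathcal{B}_{N^2}}(f^2)>t\}$ and its mirror is increasing in one coordinate and decreasing in the other, so one applies the CM comparison coordinate by coordinate and reassembles via Fubini. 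Alternatively one can redo the Radon--Nikodym computation directly at the level of expectations of squared differences, still relying on the super-polynomial smallness of $F(r/2)$ to dominate every polynomial-in-$N$ factor arising from the moment bounds and from the volume of $\mathcal{B}_{N^2}$.
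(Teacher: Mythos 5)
Your first step (restricting to $\mathcal{B}_{N^2}$ via Condition \ref{macrotime}, a moment bound on $T^{\mathcal{B}_{N^2}}(0,x)$, Cauchy--Schwarz and a weighted AM--GM) is sound and is essentially the paper's own reduction, which is phrased there through the diameter of geodesics (Lemma \ref{length2} and the event (\ref{smallevent})). The genuine gap is in your second step, which is the heart of the proposition. You claim that because $\|q-q_r\|_{L^2}\leq F(r/2)$ is super-polynomially small for $r=(\log N)^{1/\alpha+\varepsilon}$, ``the Cameron--Martin cost of transporting the law of $f_r|_{\mathcal{B}_{N^2}}$ onto that of $f|_{\mathcal{B}_{N^2}}$ vanishes'' and hence the Radon--Nikodym density between the two laws is $1+o(1)$ in every $L^p$. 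The Cameron--Martin theorem (Proposition \ref{CMcontrol}) only governs shifts of a fixed Gaussian measure by a \emph{deterministic} element of its Cameron--Martin space; passing from $f_r$ to $f$ is a change of \emph{covariance}, not a mean shift, and for such changes absolute continuity is ruled by the Feldman--H\'ajek dichotomy: the density need not exist at all, and when it does, its $L^p$ norms are not controlled by $\|q-q_r\|_{L^2}$ times the volume of the box. Nothing in the toolbox you invoke delivers the uniform $1+o(1)$ density estimate your argument rests on, so as written the step fails.

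The mechanism the paper actually uses (Lemmas \ref{Cs} and \ref{CM}) is different in kind: $f$ and $f_r$ are coupled through the same white noise, Proposition \ref{muirvanapprox} gives $\|f-f_r\|_{\infty,\mathcal{B}_M}\leq C_1 t F(r)$ off an event of probability $e^{-C_2t^2}$, and then \emph{monotonicity} of $T$ in the field sandwiches $T(f)$ between $T(f_r-th)$ and $T(f_r+th)$, where $h$ is a deterministic Cameron--Martin element with $|h|\geq 1$ on the box and $\|th\|_H\lesssim tM/\int q$ (Proposition \ref{CMcontrol2}); the Cameron--Martin theorem is then applied legitimately to the shifts $f_r\pm th$ of the single field $f_r$, and the $L^2$ bound on the Radon--Nikodym difference together with a fourth-moment bound on $T$ yields the additive error $M^4(\log M)^5F(r)$ of Lemma \ref{CM}. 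Your closing ``workaround'' paragraph (splitting $(T(f^1)-T(f^2))^2$ into level sets that are monotone coordinate-by-coordinate and applying Proposition \ref{CMMV} in each coordinate) is much closer in spirit to this and could probably be made to work, but it is only sketched: you would still need a cutoff in the $t$-integration (the uniform error of Proposition \ref{CMMV} integrated over all $t\in\RR_+$ diverges) with the tail handled by the Gaussian sup-norm bounds, and you should present this, not the density comparison between the laws of $f$ and $f_r$, as the actual proof of the comparison $\Var T^{\mathcal{B}_{N^2}}\leq \Var_r T^{\mathcal{B}_{N^2}}+o(1)$.
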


\medskip

Now for the tools used in establishing an upper bound for the variance in the bounded correlation model.
The first one is a classical technique for estimating the variance of a function of several random variables by "splitting the variance contributed by each one". We will be using it in the context of mesoscopic squares which geodesics for our pseudometric pass through.
\begin{prop}[Efron-Stein's inequality]\label{ES}
Let $(X_1,...,X_n)$ be a finite sequence of independent random variables, and $X'_i$ be an independent copy of $X_i$ for all $i$. Let $\phi$ be an $L^2$ function of $(X_1,...,X_n)$. Then,
$$
\Var \phi(X_1,...,X_n)\leq \sum\limits_{i=1}^{n}\EE\left[\Big(\phi(X_1,..,X_{i-1},X'_i,X_{i+1},...,X_n)- \phi(X_1,,...,X_n)\Big)_+^2\right],
$$
where index $+$ denotes the positive part.
\end{prop}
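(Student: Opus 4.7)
The plan is to give the classical Doob-martingale proof of Efron-Stein. First I would introduce the filtration $\mathcal{F}_i := \sigma(X_1, \ldots, X_i)$ and the Doob martingale $M_i := \EE[\phi \mid \mathcal{F}_i]$, so that $M_0 = \EE \phi$ and $M_n = \phi$. Orthogonality of martingale differences immediately gives
\[
\Var \phi \;=\; \sum_{i=1}^n \EE\!\left[(M_i - M_{i-1})^2\right].
\]

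The key step is to re-express each increment via the resampled function $\tilde{\phi}_i := \phi(X_1, \ldots, X_{i-1}, X'_i, X_{i+1}, \ldots, X_n)$. Since $X'_i$ is independent of $(X_1, \ldots, X_n)$ and shares its law with $X_i$, integrating $(X'_i, X_{i+1}, \ldots, X_n)$ out of $\tilde{\phi}_i$ conditionally on $\mathcal{F}_i$ yields $\EE[\tilde{\phi}_i \mid \mathcal{F}_i] = M_{i-1}$, so that $M_i - M_{i-1} = \EE[\phi - \tilde{\phi}_i \mid \mathcal{F}_i]$. Conditional Jensen then bounds the squared increment by $\EE[(\phi - \tilde{\phi}_i)^2 \mid \mathcal{F}_i]$, giving in summary $\Var \phi \leq \sum_i \EE[(\phi - \tilde{\phi}_i)^2]$.

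To reach the sharp stated bound (with only the positive part on the right), I would pass through the equivalent sharp form $\Var \phi \leq \sum_i \EE[\Var(\phi \mid X_{-i})]$, where $X_{-i}$ denotes the tuple with $X_i$ removed. This is proved by the Hoeffding--ANOVA decomposition $\phi = \sum_{S \subseteq \{1,\ldots,n\}} \phi_S$, with each $\phi_S$ depending only on $(X_j)_{j\in S}$ and orthogonal to any coarser $\sigma$-algebra: variance orthogonality gives $\Var \phi = \sum_{S \neq \emptyset}\EE[\phi_S^2]$, while $\EE[\Var(\phi\mid X_{-i})] = \sum_{S\ni i}\EE[\phi_S^2]$, so summing over $i$ counts each nonempty $S$ exactly $|S|\geq 1$ times. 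Conditionally on $X_{-i}$ the variables $\phi$ and $\tilde{\phi}_i$ are i.i.d., hence $\Var(\phi\mid X_{-i}) = \tfrac{1}{2}\EE[(\tilde{\phi}_i - \phi)^2\mid X_{-i}]$, and the global symmetry $X_i \leftrightarrow X'_i$ (under which $\tilde{\phi}_i - \phi$ is symmetric about $0$) reduces $\EE[(\tilde{\phi}_i - \phi)^2]$ to $2\EE[(\tilde{\phi}_i - \phi)_+^2]$. No serious obstacle is expected, since this is a classical, purely algebraic identity; the only care needed is in tracking positive versus negative parts to land on the exact form of the stated bound, and in verifying that the integrability assumption $\phi \in L^2$ makes all conditional expectations well defined.
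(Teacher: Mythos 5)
Your proof is correct, but note that the paper itself offers no proof of this proposition: it is quoted as the classical Efron--Stein inequality and used as a black box, so there is no argument of the paper to compare against. Within your write-up, the first half (the Doob martingale $M_i=\EE[\phi\mid\mathcal{F}_i]$, the identity $\EE[\tilde\phi_i\mid\mathcal{F}_i]=M_{i-1}$ and conditional Jensen) only yields $\Var\phi\leq\sum_i\EE[(\phi-\tilde\phi_i)^2]$, which is weaker by a factor $2$ than the stated bound, and you correctly discard it; the actual proof is your second route, namely the Hoeffding--ANOVA identity $\sum_i\EE[\Var(\phi\mid X_{-i})]=\sum_{S\neq\emptyset}|S|\,\EE[\phi_S^2]\geq\Var\phi$, combined with the observation that $\phi$ and $\tilde\phi_i$ are conditionally i.i.d.\ given $X_{-i}$, so $\EE[\Var(\phi\mid X_{-i})]=\tfrac12\EE[(\tilde\phi_i-\phi)^2]=\EE[(\tilde\phi_i-\phi)_+^2]$ by the exchange symmetry $X_i\leftrightarrow X_i'$. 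All of these steps are sound under the $L^2$ hypothesis, so the argument is complete; you could simply delete the martingale preamble, which plays no role in reaching the stated inequality.
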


The following lemma will be used in order to control the number of "small increments" in a geodesic. It is akin to a classical lemma by Kesten (\cite{kesten1986aspects}, Proposition 5.8).
\begin{defn}
For any pair of number $r$, $R$ and any finite sequence of compact sets in $\RR^d$, we say that they are $(r,R)$\emph{-separated} if the distance of any pair of sets in the sequence is more than $r$ and the distance between any consecutive sets is less than $R$.
\end{defn}

\begin{defn}\label{E2}
For any $a,B,M>0$, for any sequence $(r_N)_{N\in\NN}$ of real numbers, for any $N\in\NN$, let  $\mathcal{E}_{a,B,r,N,M}$ be the event that there exists a sequence of more than $M$ disjoint translates of $A_{r_N}$ with centers in $\ZZ^d$, $(r_N,Br_N)$-separated, with the first annulus being at distance less than $Br_N$ from $0$, and with the sum of their times smaller than $aN$.
\end{defn}
\begin{lem}\label{length}
Let $f$ be a Gaussian field on $\RR^d$ field verifying Assumption \ref{a:basic}, as well as $\ell\in\RR$ such that Condition \ref{macrotime} is verified. Let $B>1$. Let $(r_N)_{N\in\NN}$ be a sequence going to $\infty$ as $N$ goes to $\infty$. 
Then there exist $a>0$, $N_0\in\NN$ such that
$$
\forall N\geq N_0\text{, }\forall M\geq 2\frac{N}{r_N}\text{, }\quad\PP_{r_N}[\mathcal{E}_{a,B,r,N,M}]\leq \left(\frac12\right)^{M}.
$$
\end{lem}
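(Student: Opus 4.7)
The plan is a union bound over admissible sequences of annulus centers, combined with the independence of the annulus times under the finite correlation range field $f_{r_N}$. First, we transfer the single-annulus bound from $f$ to $f_{r_N}$: since $\psi$ is non-decreasing, the event $\{T(A_{r_N}) \leq a_0 r_N\}$ is decreasing in the field, so the Muirhead--Vanneuville-type comparison (Proposition~\ref{CMMV}) applied to Condition~\ref{macrotime} should yield some $a_0>0$ and a $\max(2d,4)$-sub-polynomial $G_0$ with
\[ \PP_{r_N}[T(A_{r_N}) \leq a_0 r_N] \leq G_0(r_N) \]
for all $N$ large enough.

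Next, we count the admissible sequences. The first center $c_1$ must satisfy $\|c_1\| < (B+1)r_N$, giving at most $C(Br_N)^d$ choices in $\ZZ^d$; given $c_i$, the $(r_N,Br_N)$-separation constraints force $3 r_N < \|c_{i+1}-c_i\| < (B+2) r_N$, again giving at most $C(Br_N)^d$ choices. Hence there are at most $(CB^d r_N^d)^M$ admissible sequences. For any such sequence, the balls $\mathcal{B}_{r_N} + c_i$ are pairwise separated by more than $r_N$; since $T(A_{r_N}+c_i)$ depends only on $f_{r_N}$ restricted to $\mathcal{B}_{r_N}+c_i$ (any path leaving this ball can be truncated at its first crossing of $S_{r_N}+c_i$ without increasing its cost, as $\psi\geq 0$), the $M$ annulus times are mutually independent under $\PP_{r_N}$. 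If their sum is less than $aN$ with $M\geq 2N/r_N$, then pigeonhole forces at least $M/2$ of them to be at most $2aN/M \leq a r_N$; setting $a=a_0$ and using independence, the probability of such a realization for a fixed sequence is at most $\binom{M}{M/2} G_0(r_N)^{M/2} \leq 2^M G_0(r_N)^{M/2}$.

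Combining the counting with the per-sequence estimate yields
\[ \PP_{r_N}[\mathcal{E}_{a,B,r,N,M}] \leq \bigl[2 C B^d r_N^d \sqrt{G_0(r_N)}\bigr]^M, \]
and since $G_0$ is in particular $2d$-sub-polynomial, the bracketed factor tends to $0$, hence is below $1/2$ for $N$ large, giving the claim. The main technical obstacle is the transfer step: Condition~\ref{macrotime} is formulated for $f$, and we must deduce the corresponding estimate for $f_{r_N}$ while preserving a sub-polynomial decay fast enough (exponent at least $2d$) to overcome the polynomial factor $r_N^d$ from the counting. This is precisely where the choice $\max(2d,4)$ in Condition~\ref{macrotime} becomes critical. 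Otherwise the argument is a clean union-bound plus independence plus pigeonhole combination in the spirit of Kesten.
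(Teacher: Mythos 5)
Your proof follows essentially the same route as the paper's: a union bound over the at most $(C B^d r_N^d)^M$ admissible annulus sequences, independence of the annulus times under $\PP_{r_N}$ coming from the $(r_N,Br_N)$-separation and the finite correlation range, a pigeonhole step forcing many annuli (you use at least $M/2$, the paper uses at least $M-N/r_N$) to have time at most $a r_N$, and the $\max(2d,4)$-sub-polynomial decay to beat the entropy factor $r_N^d$, exactly as in the paper's display (\ref{repr}). The only differences are bookkeeping (your $\binom{M}{M/2}\leq 2^M$ versus the paper's $(2e)^{M-N/r_N}$) and the fact that you make explicit, via monotonicity and Proposition \ref{CMMV}, the transfer of Condition \ref{macrotime} from $\PP$ to $\PP_{r_N}$ — a step the paper's proof asserts without comment and which indeed needs the fast decay of $F$ available in the setting where the lemma is applied.
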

\begin{proof}
Fix a Gaussian field, a parameter $\ell$. Consider a sequence of annuli as in Definition \ref{E2} and $a>0$.  Call $(\mathcal{A}_1,...,\mathcal{A}_M)$ the first $M$ annuli of the sequence.
For each annulus $\mathcal{A}_i$, define
$$
\mathcal{V}_{i,a,N}:=\{T(\mathcal{A}_i)\leq ar_N\}.
$$
Further, notice that for $\mathcal{E}_{a,B,r,N,M}$ to occur, at least $M-N/r_N$ events of the form $\mathcal{V}_{i,a,N}$ occur.
The separation condition in $\mathcal{E}_{a,B,r,N,M}$ allows to notice that, for any annulus in the sequence, the next one can be chosen among $C_0(2B)^dr_N^d$ possibilities, $C_0$ being a universal constant. The same holds for the first annulus.
Hence the total number of different possible sequences is smaller than
$$
(C_0(2B)^dr_N^d)^M.
$$
Thus the number of possible sets of annuli with time smaller than $ar_N$ is smaller than
\begin{align*}
(C_0(2B)^dr_N^d)^M{M \choose M-\frac{N}{r_N}}
 \leq (C_0(2B)^dr_N^d)^M(2e)^{M-\frac{N}{r_N}}
\end{align*}
(by the classical inequality ${n \choose k}\leq \left(\frac{en}{k}\right)^k$, since $M\geq 2\frac{N}{r_N}$).
Finally, using the separation assumption in Definition \ref{E2}, we have for all $N$, $M\geq 2\frac{N}{r_N}$,
$$
\PP_{r_N}[\mathcal{E}_{a,B,r,N,M}]\leq (C_0(2B)^dr_N^d)^M \left(2e\PP_{r_N}(\mathcal{V}_{i,a,N})\right)^{M-\frac{N}{r_N}}.
$$
So that, since $M\geq 2\frac{N}{r_N}$,
\begin{equation}\label{repr}
\PP_{r_N}[\mathcal{E}_{a,B,r,N,M}]\leq \left(C_0(2B)^dr_N^d\sqrt{2e\PP_{r_N}(\mathcal{V}_{i,a,N})}\right)^M.
\end{equation}
Since we have Condition \ref{macrotime}, we get that there exists $a>0$, $N_0\in\NN$ such that for all $N\geq N_0$, for all $i$,
$$
\PP_{r_N}(\mathcal{V}_{i,a,N})\leq \frac{1}{8eC_0^2(2B)^{2d}r_N^{2d}}.
$$
The conclusion then follows from (\ref{repr}).
\end{proof}
We now make a few geometric observations which will allow us to apply the previous lemma.
\begin{defn}\label{crossed}
We say that a continuous path $\gamma$ \emph{crosses} an annulus $A$ if $\gamma$ intersects both the inner and outer squares of $A$.
\end{defn}
\begin{defn}\label{properlyused}
Consider any continuous path $\gamma$, and a $d$-dimensional square $\mathcal{S}$ of side $r$.
We call the square $\mathcal{S}$ \emph{properly used} if its intersection with $\gamma$ has diameter greater than $\frac{r}{3^d}$. 
\end{defn} 
\begin{defn}\label{squareslattice}
For any $r>0$ and integer $d$, let $\mathcal{S}$ be the $d$-dimensional hyper-square defined by
$$
\mathcal{S}:=\Big\{x=(x_1,...,x_d)\in\RR^d\big|\quad \forall i\in\{1,...,d\}, \quad 0< x_i<r\Big\}.
$$
We will call \emph{set of hyper-squares defined by the lattice $r\ZZ^d$} the set of translates of $\mathcal{S}$ by a vector of the form
$$
r(k_1,...,k_d),
$$
the $k_i$'s being integers.
\end{defn} 
Let $r>0$ and $f$ be a Gaussian field. Let $\ell>0$. Let $T$ be the metric associated with $f$ and $\ell$ (see Definition \ref{pseudometric}). 
Let $(S_i)_i$ be the set of hyper-squares defined by $r \ZZ^d$. 
\begin{defn}\label{Gox}
For any $x\in\RR^d$, call $\mathcal{G}(0,x)$ the set of squares of $r\ZZ^d$ which are at distance less than $r$ from all geodesics between $0$ and $x$.
\end{defn}
\begin{lem}\label{proper}
Consider a square $S_i$ a path $\gamma$ goes through. Then, as long as $\gamma$ does not \emph{properly use} (see Definition \ref{properlyused}) a neighboring square, it stays within $S_i$ and its neighbors.
\end{lem}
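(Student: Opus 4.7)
The plan is to prove the contrapositive: if $\gamma$ exits the union $\mathcal{N}(S_i)$ of $S_i$ together with its $3^d-1$ lattice-neighbors, then $\gamma$ must properly use at least one of those neighbors. Note first that $\mathcal{N}(S_i)$ is a hyper-cube of side $3r$ with $S_i$ as its central cell, so any point outside $\mathcal{N}(S_i)$ is at sup-norm distance at least $r$ from $S_i$.

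The key quantity to track is the continuous function $d(t) := \mathrm{dist}_\infty(\gamma(t), S_i)$. Since $\gamma$ meets $S_i$ there is a time $t_0$ with $d(t_0)=0$, and under the contrapositive hypothesis there is also a time $t_1$ with $d(t_1)\geq r$. The intermediate value theorem then produces times
\[
t_0 = s_0 < s_1 < \cdots < s_{3^d} \leq t_1
\qquad \text{with} \qquad d(s_k) = k\, r/3^d.
\]
For $k \geq 1$ the point $\gamma(s_k)$ lies in the shell $\mathcal{N}(S_i) \setminus S_i$, and this shell is covered by the closures of the $3^d - 1$ neighboring squares. Pigeonhole applied to the $3^d$ points $\gamma(s_1),\dots,\gamma(s_{3^d})$ produces a neighbor $N$ containing at least two of them, say $\gamma(s_i)$ and $\gamma(s_j)$ with $i<j$. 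The reverse triangle inequality for $d$ yields
\[
\|\gamma(s_i) - \gamma(s_j)\|_\infty \;\geq\; |d(s_i) - d(s_j)| \;=\; (j-i)\, r/3^d \;\geq\; r/3^d,
\]
so $\mathrm{diam}(\gamma \cap N) \geq r/3^d$, i.e.\ $N$ is properly used, contradicting the hypothesis of the lemma.

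There is no serious obstacle here; the argument is purely geometric and quantitative. The only small wrinkles are (i) that the lattice squares of Definition~\ref{squareslattice} are half-open, so one must use their closures when invoking pigeonhole, and (ii) that the "properly used" threshold in Definition~\ref{properlyused} is strict. The first is routine bookkeeping, and the second can be handled either by inserting one additional crossing level $s_{3^d+1}$ to force strict separation of indices in the pigeonhole, or simply by replacing the threshold $r/3^d$ with a slightly smaller constant throughout without affecting any subsequent use of the lemma.
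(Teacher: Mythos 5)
Your argument is correct and is essentially the paper's own proof in expanded form: the paper bounds the maximal sup-distance a non-properly-using path can reach from $S_i$ by summing the diameters ($\leq r/3^d$ each) of its intersections with the $3^d-1$ neighbors, getting $\frac{3^d-1}{3^d}r<r$, which is exactly the content of your intermediate-value/pigeonhole step on the distance function. The closure and strict-inequality wrinkles you flag are harmless (the paper glosses over them as well, and shrinking the threshold constant does not affect any later use of the lemma), so no gap.
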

The proof of this lemma is immediate:
\begin{proof}
Let $(\tilde{S}_j)_{j=1,...,3^d-1}$ be the family of neighboring squares of $S_i$, i.e squares that share at least one boundary vertex with $S_i$ in $r\ZZ^d$. If the path $\gamma$ does not \emph{properly use} any of the $\tilde{S}_j$, by adding up the diameter of its intersection with each of them, we deduce that $\gamma$ cannot go at distance larger than $\frac{3^d-1}{3^d}r<r$ from $S_i$, and thus it cannot intersect any square other than $S_i$ or the $\tilde{S}_j$'s.
\end{proof}

For any $x\in\RR^d$, given any geodesic $\gamma$ for the pseudometric $T$ (see Definition \ref{pseudometric}) between $0$ and $x$, we define a $(r,2r)$-separated set of squares which we call $\tilde{\mathcal{G}}_\gamma(0,x)$, by the following procedure:
\begin{defn}\label{algo1}
Let $x\in\RR^d$. Start with $\tilde{\mathcal{G}}_\gamma(0,x)$ being the empty set.
Each new square the geodesic intersects is added to the set $\tilde{\mathcal{G}}_\gamma(0,x)$ if:
\begin{itemize}
\item none of its $3^d-1$ neighbors is already in $\tilde{\mathcal{G}}_\gamma(0,x)$.
\item it is \emph{properly used}.
\end{itemize}
Since by definition, a geodesic has finite Euclidean length, this procedure terminates.
\end{defn}
\begin{figure}
\centering
\includegraphics[height=14cm]{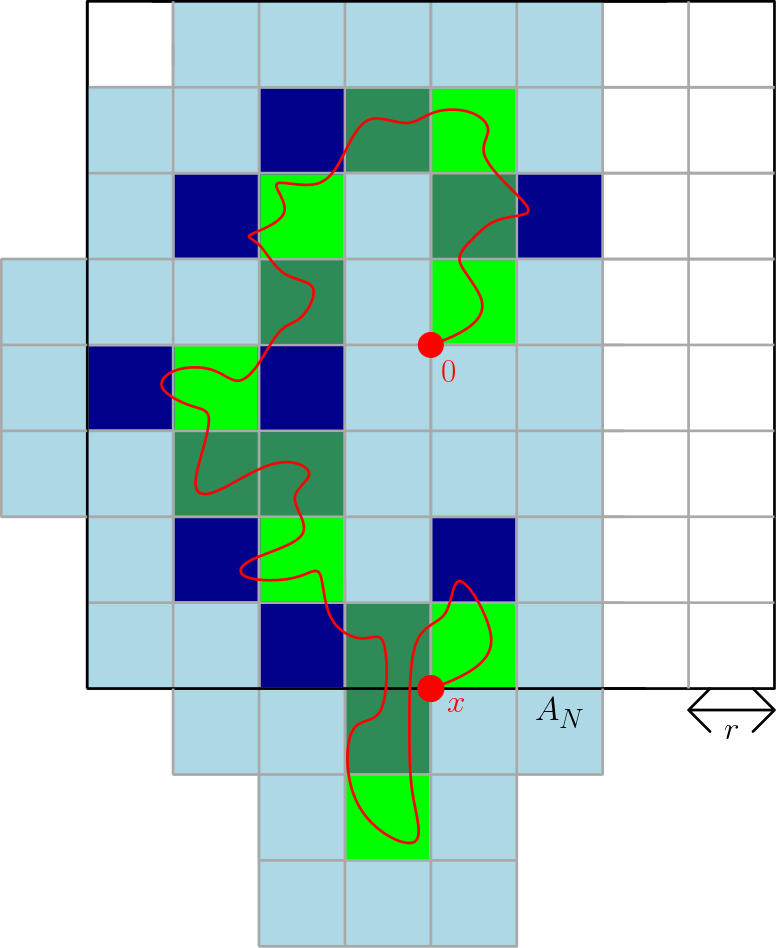}
\caption{In red, a geodesic $\gamma$ between $0$ and $x$. In light green, all squares in $\tilde{\mathcal{G}}_\gamma(0,x)$. In dark green, all other squares which are \emph{properly used}. In dark blue, all other squares the geodesic goes through. In light blue, all other squares in $\mathcal{G}(0,x)$.}
\label{f:squares}
\end{figure}
This construction is illustrated in Figure \ref{f:squares}. The fact that these squares are separated by a distance more than $r$ is obvious given the definition. The fact that they are at distance less than $2r$ can be seen thanks to Lemma \ref{proper}. 
\begin{rem}\label{ann}
For any square in $\tilde{\mathcal{G}}_\gamma(0,x)$, there exists an integer point at distance less than $1$ from its boundary and a copy of $A_{r/3^d}$ centered at that point which is crossed by the geodesic $\gamma$, as is clear given the definition of a \emph{properly used} square (Definition \ref{properlyused}). 
\end{rem}
For any geodesic $\gamma$, starting from $\tilde{\mathcal{G}}_\gamma(0,x)$, we define the set $\mathcal{G}_\gamma^\dagger(0,x)$ in the following way:
\begin{defn}\label{algo2}
Start with  $\mathcal{G}_\gamma^\dagger(0,x)=\tilde{\mathcal{G}}_\gamma(0,x)$.
\begin{itemize}
\item Add to the set $\mathcal{G}_\gamma^\dagger(0,x)$ all neighbors of its elements which are \emph{properly used}. We thus add no more than $3^d-1$ new squares per already present square.
\item Then add to the set $\mathcal{G}_\gamma^\dagger(0,x)$ all neighbors of its elements which the geodesic $\gamma$ goes through but are not \emph{properly used}. Again, no more than $3^d-1$ squares per previously present square are added.
\item Finally, add all neighbors of all previously present squares. Once more, no more than $3^d-1$ squares per previously present square are added.
\end{itemize}
\end{defn}
\begin{prop}\label{contained}
For any $r>0$, for any Gaussian field $f$, for any level $\ell$, for any $x\in\RR^d$, for any geodesic $\gamma$ between $0$ and $x$,
$$
\mathcal{G}_\gamma^\dagger(0,x)\supseteq \mathcal{G}(0,x).
$$
\end{prop}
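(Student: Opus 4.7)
The plan is to unpack the condition defining $\mathcal{G}(0,x)$ geometrically and then verify that every cell it contains is produced by one of the three stages of Definition \ref{algo2}. Since any two distinct non-neighboring cells of the lattice $r\ZZ^d$ are separated in sup norm by at least $r$, the condition $\dist(S,\gamma)<r$ characterising a cell $S\in\mathcal{G}(0,x)$ is equivalent to requiring that $S$ be either a cell that $\gamma$ intersects or a lattice neighbor of such a cell. The target therefore reduces to showing that every cell $S$ intersected by $\gamma$ already lies in the intermediate set produced after the second bullet of Definition \ref{algo2}, because the third bullet then supplies all remaining lattice neighbors.

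I would split into two subcases, according to whether $S$ is properly used. If $S$ is properly used, then at the moment $\gamma$ first enters $S$ the acceptance criterion of Definition \ref{algo1} either inserts $S$ directly into $\tilde{\mathcal{G}}_\gamma(0,x)$ (when no neighbor of $S$ is yet present) or refuses $S$ because one of its neighbors has already been added; in the latter case $S$ is itself a properly used neighbor of an element of $\tilde{\mathcal{G}}_\gamma(0,x)$, and is therefore adjoined by the first bullet of Definition \ref{algo2}. Either way, $S$ lies in the set produced after bullet 1.

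If $S$ is not properly used, I would apply Lemma \ref{proper} directly to $S$: in the regime of interest (where $|x|$ is not of order $r$, in particular for $|x|$ large enough to exceed the diameter of the $3$-cell neighborhood of a single cell), $\gamma$ cannot remain confined to $S$ and its lattice neighbors throughout its length, so it must properly use some neighbor $S^\sharp$ of $S$. By the preceding paragraph, $S^\sharp$ lies in the set after bullet 1, and $S$ is a non-properly-used lattice neighbor of $S^\sharp$ that $\gamma$ traverses, so the second bullet of Definition \ref{algo2} places $S$ into $\mathcal{G}_\gamma^\dagger(0,x)$. The third bullet then automatically adjoins every lattice neighbor of the cells so far collected, yielding the claimed inclusion $\mathcal{G}(0,x)\subseteq\mathcal{G}_\gamma^\dagger(0,x)$.

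The main obstacle I anticipate is the clean invocation of Lemma \ref{proper} in the non-properly-used subcase: guaranteeing that $\gamma$ truly leaves the $3$-cell neighborhood of $S$ fails only in the degenerate regime where both endpoints $0$ and $x$ sit inside that neighborhood, i.e. when $|x|$ is of order $r$. This edge case falls outside the scaling regime relevant for the variance estimate to follow and can be disposed of by a direct check.
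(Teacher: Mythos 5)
Your proof is correct and follows essentially the same route as the paper's: bullet 1 of Definition \ref{algo2} captures all properly used squares, bullet 2 combined with Lemma \ref{proper} captures all squares the geodesic traverses, and bullet 3 then yields everything within distance $r$ of $\gamma$, hence all of $\mathcal{G}(0,x)$. Your write-up is simply a more detailed version of the paper's three-sentence argument, and your explicit flag of the degenerate regime $|x|=O(r)$ (which the paper silently ignores, and which is irrelevant in the application where $r=(\log N)^{1/\alpha+\varepsilon}\ll|x|=N$) is a point in its favor.
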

\begin{proof}
The first step of the procedure in Definition \ref{algo2}  allows us to obtain a set of squares containing all \emph{properly used} squares, this is clear given Definition \ref{algo1}. The second step allows us to recover a set containing all the squares the geodesic goes through, as can be seen thanks to Lemma \ref{proper}.
Finally, the third step allows to recover all squares in the set $\mathcal{G}(0,x)$, which is clear given its definition.
Thus for any $\gamma$, $\mathcal{G}_\gamma^\dagger(0,x)\supseteq \mathcal{G}(0,x)$. 
\end{proof}
In Figure \ref{f:squares}, all intermediate values of the set $\mathcal{G}_\gamma^\dagger(0,x)$ are represented.
Combining all the estimates inside Definition \ref{algo2} and using Proposition \ref{contained}, we get the following.
\begin{lem}\label{combin}
For any $r>0$, for any $x$ of norm $N$, for any Gaussian field $f$ and associated pseudometric, for any $x\in\RR^d$, for any geodesic $\gamma$ between $0$ and $x$,
\begin{equation*}
\#\tilde{\mathcal{G}}_\gamma(0,x)\geq \frac{1}{(3^d)^3}\#\mathcal{G}_\gamma^\dagger(0,x)\geq \frac{1}{3^{3d}} \#\mathcal{G}(0,x).
\end{equation*}
\end{lem}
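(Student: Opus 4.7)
The proof is essentially a bookkeeping argument combining the inductive construction in Definition \ref{algo2} with the containment already established in Proposition \ref{contained}, so I would not expect any substantive obstacle.

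For the first inequality, the plan is to track the cardinality of the partial sets produced at each stage of Definition \ref{algo2}, starting from $\tilde{\mathcal{G}}_\gamma(0,x)$. At each of the three stages, the rule is to adjoin certain neighbors of the currently present squares in the lattice $r\ZZ^d$; since a square in $r\ZZ^d$ has exactly $3^d - 1$ neighbors (those sharing at least a vertex with it), each stage enlarges the current set by a factor of at most $3^d$ (the square itself plus its neighbors). Applying this bound three times in succession gives
\[
\#\mathcal{G}_\gamma^\dagger(0,x) \leq 3^{3d}\, \#\tilde{\mathcal{G}}_\gamma(0,x),
\]
which is exactly the first inequality.

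For the second inequality, I would simply invoke Proposition \ref{contained}, which directly states $\mathcal{G}_\gamma^\dagger(0,x) \supseteq \mathcal{G}(0,x)$, so that $\#\mathcal{G}_\gamma^\dagger(0,x) \geq \#\mathcal{G}(0,x)$. Dividing by $3^{3d}$ and chaining with the first inequality yields the displayed chain of bounds. The only mild point to verify carefully is that the factor $3^d$ per step in the first inequality is tight (rather than $3^d-1$), which is valid because the count of new squares is $3^d - 1$ per already present square, so the total count is multiplied by at most $1 + (3^d-1) = 3^d$ at each stage.

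Since both ingredients (the three-step neighbor enlargement and the containment of $\mathcal{G}(0,x)$) have already been recorded in the preceding definitions and in Proposition \ref{contained}, the proof should read in a few lines with no real difficulty. The only thing to make explicit is the iterated multiplicative bound described above.
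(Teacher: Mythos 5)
Your argument is correct and is exactly the one the paper intends: the paper simply says the lemma follows by "combining all the estimates inside Definition \ref{algo2} and using Proposition \ref{contained}", which is precisely your iterated factor-of-$3^d$ count over the three stages together with the containment $\mathcal{G}_\gamma^\dagger(0,x)\supseteq \mathcal{G}(0,x)$. Nothing is missing.
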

Finally, we will need the following control on the expectation of the pseudometric.
\begin{lem}\label{subaddlem}
Let $f$ be a Gaussian field satisfying Assumption \ref{a:basic}. Let $u=(u_n)_{n\in\NN}$ be a superadditive sequence such that for any $n$, $u_n\geq n$. Let $T$ be a pseudometric as in Definition \ref{pseudometric}, and for any $r>0$, $f_r$ be given by Definition \ref{d:cutoff}. Let $\ell\in\RR$. There exists a constant $C_{f,u}>0$ such that for any $n\in\NN$ and $x$ of norm $n$, for any $r\geq 1$,
$$
\frac{\EE_{r}T^{\mathcal{B}_{u_n}}(0,x)}{n}\leq C_{f,u},
$$
where $T^{\mathcal{B}_{u_n}}$ is as in (\ref{restrtime}).
\end{lem}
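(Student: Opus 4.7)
The plan is to exhibit an explicit short admissible path from $0$ to $x$ and integrate its cost directly, using stationarity and the sublinear growth of $\psi$; no delicate estimate on geodesics is needed.

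I would use as a competitor the straight segment $\gamma$ from $0$ to $x$. Since the norm on $\RR^d$ is the sup norm and $|x|=n$, the segment $\gamma$ is contained in $\mathcal{B}_n$, hence in $\mathcal{B}_{u_n}$ by the hypothesis $u_n\geq n$, and it is (piecewise) affine, so it is admissible in the infimum of~(\ref{restrtime}). Its Euclidean length is at most $\sqrt{d}\,n$, and therefore
$$T^{\mathcal{B}_{u_n}}(0,x)\leq \int_\gamma \psi(f_r+\ell).$$

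The next ingredient is a pointwise bound on $\psi$ that follows directly from the four items of Definition~\ref{pseudometric}: for every $y\in\RR$,
$$\psi(y)\leq C_\psi(1+y_+).$$
Indeed, $\psi$ vanishes on $(-\infty,0]$ by item~(4); for $0<y\leq 1$ monotonicity and item~(3) give $\psi(y)\leq \psi(1)\leq C_\psi$; and for $y\geq 1$ item~(3) gives $\psi(y)\leq C_\psi y$. Taking expectations in the previous display and using Fubini together with the stationarity of $f_r$ (inherited from $f$ via Assumption~\ref{a:basic}), the quantity $\EE_r[\psi(f_r(s)+\ell)]$ is independent of $s\in\gamma$, so
$$\EE_r T^{\mathcal{B}_{u_n}}(0,x)\leq \sqrt{d}\,n\cdot\EE_r\bigl[\psi(f_r(0)+\ell)\bigr]\leq \sqrt{d}\,n\cdot C_\psi\bigl(1+|\ell|+\EE_r|f_r(0)|\bigr).$$

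It remains to bound $\EE_r|f_r(0)|$ uniformly in $r\geq 1$. From Definition~\ref{d:cutoff}, $q_r(x)=\varphi(\|x\|_2/r)\,q(x)$ with $|\varphi|\leq 1$, whence $\Var f_r(0)=\int q_r^2\leq \int q^2=\Var f(0)<\infty$ by Assumption~\ref{a:basic}. Consequently $\EE_r|f_r(0)|\leq \sqrt{2\Var f(0)/\pi}$ uniformly in $r\geq 1$, and dividing by $n$ gives the claim with a constant depending only on $f$, $\ell$, $\psi$ and $d$. There is no real obstacle here: the argument is a single-path first-moment computation, and in fact it uses neither the superadditivity of $u$ nor the restriction to $\mathcal{B}_{u_n}$ in any essential way; the sole role of the hypothesis on $u$ is the comparison $u_n\geq n$, used to ensure the straight segment fits inside the prescribed ball.
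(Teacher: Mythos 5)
Your proof is correct, but it follows a genuinely different route from the paper. You take the straight segment from $0$ to $x$ as a competitor (admissible in (\ref{restrtime}) since $u_n\geq n$), use the pointwise bound $\psi(y)\leq C_\psi(1+y_+)$, and conclude by Tonelli, stationarity of $f_r$, and the one-point moment $\EE_r|f_r(0)|\leq \sqrt{\Var f_r(0)}$, uniform in $r\geq 1$. The paper instead first shows that $n\mapsto \EE_r T^{\mathcal{B}_{u_n}}(0,ny)$ is subadditive — this is exactly where the superadditivity of $u$ and the hypothesis $u_n\geq n$ are used, via $\mathcal{B}_{u_m}+ny\subseteq \mathcal{B}_{u_{n+m}}$ — reducing the claim to the unit scale, and then bounds $\EE_r T^{\mathcal{B}_{u_1}}(0,y)$ by $C_\psi u_1\sqrt{d}\,\EE\|f_r\|_{\infty,\mathcal{B}_{u_1}}$, controlled uniformly in $r$ by the sup-norm estimate of Corollary \ref{muirvansupnorm2}. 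Your argument is more elementary: it needs no subadditivity and no supremum estimate, only a single Gaussian first moment, and as you observe it shows the superadditivity hypothesis is inessential for this lemma (its only role is the containment $\mathcal{B}_n\subseteq\mathcal{B}_{u_n}$); the paper's version has the advantage of reusing machinery (Corollary \ref{muirvansupnorm2}) already needed elsewhere. One small caveat in your write-up: Definition \ref{d:cutoff} does not actually state $\varphi\leq 1$, only $\varphi\geq 0$ with $\varphi=1$ on $[-1/2,1/2]$; since $\varphi$ is a fixed smooth compactly supported function this is harmless — replace your bound by $\Var f_r(0)=\int q_r^2\leq \|\varphi\|_\infty^2\int q^2$, which is still uniform in $r$ and depends only on $f$ and the fixed cutoff.
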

\begin{proof}
Let $f$ be a Gaussian field satisfying Assumption \ref{a:basic} and $\ell\in\RR$. Fix a vector $y$ of norm $1$. For any $r\geq 1$, the sequence
$$
(\EE_{r}T^{\mathcal{B}_{u_n}}(0,ny))_n
$$
is subadditive. 
Indeed, if $n$, $m$ are two integers,
\begin{align*}
& T^{\mathcal{B}_{u_{n+m}}}(0,(n+m)y)\\
&\quad \leq T^{\mathcal{B}_{u_{n+m}}}(0,ny)+T^{\mathcal{B}_{u_{n+m}}}(ny,(n+m)y)\\
&\quad \leq T^{\mathcal{B}_{u_{n}}}(0,ny)+T^{\mathcal{B}_{u_{m}}+ny}(ny,(n+m)y),
\end{align*}
where in the last step we have used the superadditivity of the sequence $(u_n)$ and the fact that for any $n$, $u_n\geq n$ to argue that
$$
\mathcal{B}_{u_{m}}+ny\subseteq \mathcal{B}_{u_{n}}+\mathcal{B}_{u_{m}}\subseteq \mathcal{B}_{u_{n+m}}.
$$
Thus, for any $r\geq 1$, for any $n\in\NN$,
\begin{equation}\label{usesubadd}
\frac{\EE_{r}T^{\mathcal{B}_{u_n}}(0,ny)}{n}\leq\EE_{r}T^{\mathcal{B}_{u_1}}(0,y),
\end{equation}
Further, by Definition \ref{pseudometric}, for any $r$, 
$$\EE_{r}T^{\mathcal{B}_{u_1}}(0,y)\leq u_1\sqrt{d}\text{ }\EE[\psi(\|f_r\|_{\infty,\mathcal{B}_{u_1}})]\leq C_\psi u_1\sqrt{d}\text{ } \EE\|f_r\|_{\infty,\mathcal{B}_{u_1}}.$$
 Thus by Corollary \ref{muirvansupnorm2} we deduce that there exists a constant $C_{f,u}$ such that for any $r\geq 1$,
$$
\EE_{r}T^{\mathcal{B}_{u_1}}(0,y)\leq C_{f,u}.
$$
Combining this with (\ref{usesubadd}) yields the conclusion.

\end{proof}

\subsubsection{Proof}

We are now ready to start the main proof. Once again, it is inspired by Kesten's original proof of the linear variance bound for classical first passage percolation (\cite{10.1214/aoap/1177005426}, equation 1.13 in Theorem 1), as well as its simplified version presented by Auffinger, Damron and Hanson in \cite{auffinger201750}.

\begin{proof}[Proof of Theorem \ref{main}] Let $f$ be a Gaussian field and $\ell$ be a level such that the assumptions are verified. Recall that for any $r>0$, $f_r$ is the field obtained from the same white-noise as $f$ but which has correlation range $r$ (Definition \ref{corrrange}), and for any level $\ell$, $\Var_{r}$, $\EE_{r}$, $\PP_{r}$ are defined for the corresponding measure. 
Notice that, by Proposition \ref{variancesfields}, it is enough to show that for any $\varepsilon>0$ there exists $C_\varepsilon>0$ such that for any $N\in \NN_{\geq 2}$ and $x$ of norm $N$
\begin{equation}\label{goal}
\Var_{(\log N)^{1/\alpha+\varepsilon}} T^{\mathcal{B}_{N^2}}(0,x)\leq C'_\varepsilon N(\log N)^{1/\alpha+\varepsilon}.
\end{equation}
Fix $\varepsilon>0$.
In the rest of this proof, all statements we make for some integer $N$ relate to the field $f_{(\log N)^{1/\alpha+\varepsilon}}$ and the pseudometric $T^{\mathcal{B}_{N^2}}$. For the sake of readability, we do not write the corresponding indices and exponents. For example, in this proof, when we write 
$$
\Var T (0,x),\quad\text{we mean}\quad\Var_{(\log N)^{1/\alpha+\varepsilon}} T^{\mathcal{B}_{N^2}} (0,x).
$$
Let $N\in\NN_{\geq 2}$ and $x$ be of norm $N$. 
Let $(S_i)_{i\in\NN}$ be the sequence of hyper-squares defined by the lattice $(\log N)^{1/\alpha+\varepsilon}\ZZ^d$ (see Definition \ref{squareslattice}), ordered in an arbitrary way.
We apply Efron-Stein's inequality (Proposition \ref{ES}) to $T(0,x)$:
$$
\Var T (0,x)\leq\sum\limits_{i}\EE\left[\left(T^{*i}(0,x)-T(0,x)\right)_+^2\right],
$$
where $T^{*i}(0,x)$ designates the random variable $T(0,x)$ where we have resampled  the white-noise in the square $S_i$.
Now, notice that since the field has correlation range $(\log N)^{1/\alpha+\varepsilon}$, $\Big(T^{*i}(0,x)-T(0,x)\Big)_+$ can be non-zero only if all of the geodesics defining $T(0,x)$ go at distance less than $(\log N)^{1/\alpha+\varepsilon}$ from the square $S_i$. Indeed, if some geodesic $\gamma$ goes at a larger distance from the square, then resampling the white-noise in the square does not change $T(\gamma)$, hence $T(0,x)$ does not increase. Otherwise stated, 
$$\Big(T^{*i}(0,x)-T(0,x)\Big)_+\neq 0 \implies S_i \in \mathcal{G}(0,x),$$
recalling Definition \ref{Gox}. Further, by Definition \ref{pseudometric}, we always have for any $N$, for any $x$ of norm $N$,
$$
\Big(T^{*i}(0,x)-T(0,x)\Big)_+\leq C_\psi (\log N)^{1/\alpha+\varepsilon} \|f^{*i}+\ell\|_{\infty, S_i^+},
$$
where $f^{*i}$ designates the non-stationary Gaussian field $q\star W|_{S_i}$, $W|_{S_i}$ being the resampled white-noise on $S_i$ and $0$ elsewhere, and $S_i^+$ designates the union of $S_i$ and all of its neighbors.
In particular, $\|f^{*i}+\ell\|_{\infty, S_i^+}$ depends only on the resampled white-noise in $S_i$. Hence,
\begin{align*}
\begin{split}
& \Var T (0,x)
\\
& \quad \leq \sum\limits_{i}  \EE\Bigg[ \Big(T^{*i}(0,x)-T(0,x)\Big)_+^2\un_{S_i\in \mathcal{G}(0,x)}\Bigg]
\\
&  \quad \leq C_\psi ^2(\log N)^{2/\alpha+2\varepsilon}\sum\limits_{i}  \EE\Bigg[ \|f^{*i}+\ell\|_{\infty, S_i^+}^2\un_{S_i\in \mathcal{G}(0,x)}\Bigg]
\\
& \quad = C_\psi ^2(\log N)^{2/\alpha+2\varepsilon}\sum\limits_{i}\EE\left[ \|f^{*i}+\ell\|_{\infty, S_i^+}^2\right]\EE\left[\un_{S_i\in\mathcal{G}(0,x)}\right].
\end{split}
\end{align*}
Now, using Corollary \ref{muirvansupnorm2}, there exists a constant $C_0$ such that for any $N\in\NN_{\geq 3}$ and for any $i$,
$$
\EE\left[ \|f^{*i}+\ell\|_{\infty, S_i^+}^2\right]\leq C_0(\log\log N)^2. 
$$
So that, for any $N$, for any $x$ of norm $N$,
$$
\Var T (0,x)\leq C_0C_\psi (\log N)^{2(1/\alpha+\varepsilon)}(\log\log N)^2 \EE \#\mathcal{G}(0,x).
$$
Fix some geodesic $\gamma$ from $0$ to $x$. Using Lemma \ref{combin} and recalling the set $\tilde{\mathcal{G}}_\gamma(0,x)$ from Definition \ref{algo1}, we deduce that for any $N\in\NN_{\geq 3}$, $x\in\RR^d$ such that $|x|=N$, 
\begin{equation}
\label{varmain}
\Var T (0,x)\leq  C_0C_\psi 3^{3d} (\log N)^{2(1/\alpha+\varepsilon)}(\log \log N)^2\EE \#\tilde{\mathcal{G}}_\gamma(0,x).
\end{equation}
Now, for any $N$, for any $x$ such that $|x|=N$, for any $a>0$, define the random variable $Y_N^a$ to be:
$$
Y_{N}^a:=\#\tilde{\mathcal{G}}_\gamma(0,x)\un\Big\{\sum\limits_{S_i\in\tilde{\mathcal{G}}_\gamma(0,x)}T(\mathcal{A}_{S_i})<a(\log N)^{1/\alpha+\varepsilon}\#\tilde{\mathcal{G}}_\gamma(0,x)\Big\},
$$
where for each $i$, $\mathcal{A}_{S_i}$ is the first annulus of side $(\log N)^{1/\alpha+\varepsilon} /3^d$ with center in $\ZZ^d$ and at distance less than one from the boundary of $S_i$ crossed by the geodesic. Such an annulus exists, as is justified by Remark \ref{ann}.
We thus write for any $a$, $N$, for any $x$ such that $|x|=N$,
\begin{equation}\label{twoparts2}
\EE \#\tilde{\mathcal{G}}_\gamma(0,x)\leq a^{-1}(\log N)^{-(1/\alpha+\varepsilon)}\EE T(0,x)+\EE Y_{N}^a.
\end{equation}
Definition \ref{algo1} allows for $\tilde{\mathcal{G}}(0,x)$ to fill the conditions of for separation of annuli in the family of events $\mathcal{E}_{a,B,r,N,M}$ (Definition \ref{E2}). We apply Lemma \ref{length}, remembering that we have assumed that Condition \ref{macrotime} is satisfied. We get $a>0$, $N_0\in\NN$ such that for any $N\geq N_0$, for any $M\geq 2\frac{N}{(\log N)^{1/\alpha+\varepsilon}}$,
$$
\PP[Y_{N}^a\geq M]\leq \left(\frac{1}{2}\right)^{M},
$$
so that, up to increasing $N_0$, for any $N\geq N_0$,
$$
\EE Y_N^a \leq 3\frac{N}{(\log N)^{1/\alpha+\varepsilon}}.
$$
Further, by Lemma \ref{subaddlem}, there exists a constant $C_1$ such that for any $N$, for any $x$ such that $|x|=N$,
$$
\EE T(0,x)\leq C_1 N.
$$
We deduce by (\ref{twoparts2}) that for any $N\geq N_0$, for any $x$ such that $|x|=N$,
\begin{equation*}
\EE \#\tilde{\mathcal{G}}_\gamma(0,x)\leq (a^{-1}C_1+3) \frac{N}{(\log N)^{1/\alpha+\varepsilon}}.
\end{equation*}
So that, reprising (\ref{varmain}),  for any $N\geq N_0$, for any $x$ such that $|x|=N$,
\begin{equation*}
\Var T (0,x)\leq C_0C_\psi 3^{3d}(a^{-1}C_1+3) N(\log N)^{1/\alpha+\varepsilon}(\log \log N)^2,
\end{equation*}
which yields, up to changing $\varepsilon$ to $\varepsilon/2$, a constant $C_\varepsilon>0$ depending on $\varepsilon$ such that for any $N\in\NN_{\geq 2}$ and $x$ of norm $N$
\begin{equation}\label{maincutoff}
\Var T (0,x)\leq C_\varepsilon N(\log N)^{1/\alpha+\varepsilon},
\end{equation}
which is (\ref{goal}).
\end{proof}

Finally, let us state the proposition used to deduce Corollary \ref{2dcoro} from Theorem \ref{main}.
\begin{prop}\label{numberballs}
Let $f$ be a Gaussian field on $\R^d$ satisfying  Assumption \ref{a:basic} with decay function $F$ $\alpha$-sub-exponential (see Definition \ref{subpoly}) for some $\alpha>1$ and Assumption \ref{a:pos}. Let $\ell>-\ell_c(f)$. There exist constants $a$, $C_1$ such that for any $N\geq 1$,
$$
\PP (T(A_N)\leq aN)\leq C_1e^{-N^{\alpha/5}}.
$$
In particular, Condition \ref{macrotime} is then verified.
\end{prop}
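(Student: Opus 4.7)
The plan is to turn the subcriticality of the ``slow region'' $\{f \le -\ell + \eta\}$ (for a small $\eta > 0$), together with the sharp-phase-transition input of \cite{severo2021sharp}, into a uniform lower bound on the FPP time of every path crossing $A_N$. First, because $f \stackrel{d}{=} -f$, the slow region has the same law as the excursion set $\mathcal{E}_{-\ell + \eta} = \{f \ge \ell - \eta\}$, and the hypothesis $\ell > -\ell_c$ lets one pick $\eta \in (0, \ell + \ell_c)$ so that $-\ell + \eta < \ell_c$, which places the slow region strictly inside the subcritical phase.

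In this subcritical regime, under Assumption \ref{a:basic} with $\alpha$-sub-exponential decay and Assumption \ref{a:pos}, the sharp phase transition of \cite{severo2021sharp} delivers a stretched-exponential one-arm estimate: there exist $\beta = \beta(\alpha) > 0$ and $C > 0$ such that
\[
\PP\bigl[0 \leftrightarrow \partial \mathcal{B}_L \text{ in } \{f \le -\ell + \eta\}\bigr] \le C \exp(-L^\beta)
\]
for all $L \ge 1$. Fixing a mesoscopic scale $R = R(N)$, I would cover $A_N$ by a grid of side-$R$ boxes and declare a box \emph{good} if every connected component of the slow region meeting it has diameter at most $R$. A union bound over the $O((N/R)^d)$ boxes meeting $A_N$ shows that all boxes are simultaneously good with probability at least $1 - C (N/R)^d \exp(-R^\beta)$.

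On the good event, the target is to show that every piecewise affine path $\gamma$ from $S_1$ to $S_N$ satisfies $T(\gamma) \ge a N$ for some $a > 0$. The observation driving this is that every maximal slow sub-arc of $\gamma$ lies in a single slow component, hence contributes at most $R$ to the net radial displacement of $\gamma$; the net radial displacement being $N - 1$, the fast sub-arcs must together have Euclidean length at least $N - 1 - k R$, where $k$ is the number of distinct slow components $\gamma$ enters. Combined with $\psi(f + \ell) \ge \psi(\eta) > 0$ on the fast region, this yields $T(\gamma) \ge \psi(\eta)(N - 1 - k R)$. Provided $k$ can be forced to be at most of order $N/(2R)$, this lower bound is of order $N$; tuning $R = N^{1/5}$ with $\beta$ of order $\alpha$ then produces the claimed bound $C_1 \exp(-N^{\alpha/5})$. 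The conclusion that Condition \ref{macrotime} holds is immediate, since this tail is $\max(2d, 4)$-sub-polynomial.

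The hard part is the control of $k$: a priori many small slow components can line up radially, so the good event alone does not force $k \lesssim N/R$. The cleanest way around this is to restrict the infimum defining $T(A_N)$ to paths that are FPP-geodesics, exploit that such a geodesic cannot revisit the same slow component without wastefully increasing its fast length, and combine this with an iterated renormalisation forcing fast ``buffers'' of positive radial thickness between consecutive slow crossings on a bounded number of scales; the fixed number of renormalisation steps needed to rule out pathological stackings is what degrades the natural exponent $\beta \sim \alpha$ to $\alpha/5$ in the final tail.
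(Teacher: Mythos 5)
Your overall strategy (reduce to subcriticality of the slow region $\{f\le -\ell+\eta\}$ via $f\stackrel{d}{=}-f$, import the decay of crossing probabilities from \cite{severo2021sharp}, and convert this into a linear lower bound on the time of any crossing of $A_N$) is the natural one, but the step you yourself flag as ``the hard part'' is exactly the crux of the proposition, and your sketch for it does not close the gap. Your good-box event only bounds the \emph{diameter} of each slow component by $R$; it says nothing about how many such components a crossing path meets nor about the thickness of the fast gaps between them, so the inequality $T(\gamma)\ge \psi(\eta)(N-1-kR)$ is vacuous precisely in the dangerous configuration where order $N/R$ small slow components line up radially with arbitrarily thin fast gaps. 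Restricting to geodesics does not help: a geodesic may legitimately traverse many \emph{distinct} small slow components, each once, and nothing in your event forces the fast portions to have total length of order $N$. The appeal to ``an iterated renormalisation forcing fast buffers \dots on a bounded number of scales'' is not an argument, and the claim that a bounded number of such steps produces the exponent $\alpha/5$ is unsupported; in the actual proof the exponent $\alpha/5$ has a completely different origin.

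For comparison, the paper does not attempt any deterministic path-counting on a single scale. It first obtains, for each fixed large $M$, \emph{some} $a(M)>0$ with $\PP[T(A_M)\le a(M)M]\le e^{-cM}$ (Lemma \ref{expdecann}); this only uses the exponential decay of annulus crossing probabilities from \cite{severo2021sharp} together with right-continuity of $a\mapsto\PP[T(A_M)/M\le a]$, which is why no uniform control of $a(M)$ in $M$ is available at this stage. The uniformity in the scale, which is what your $k$-control was trying to achieve, is then produced by the probabilistic gluing inequality of \cite{dewangayet} (Lemma \ref{bootstrap}): a fast crossing of a large annulus forces many well-separated fast crossings of smaller annuli, up to a decorrelation error $nSe^{-\frac12 Q^{\alpha}}$ coming from the $\alpha$-sub-exponential decay. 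Running this along the doubly exponential scales $N_{M,k}=M^{2^k}$ with $Q=\sqrt{N_{M,k-1}}=N_{M,k}^{1/4}$ yields a single constant $\delta_\infty>0$ valid at all scales, and the exponent $\alpha/5$ appears simply because it is safely below the $\alpha/4$ produced by that error term in the induction. If you want to salvage your route, you would need to prove a quantitative statement of the type ``on a suitable multiscale good event, every crossing of $A_N$ spends fast length $\ge cN$,'' and that is essentially equivalent to redoing the renormalisation of Lemma \ref{bootstrap}; as written, your proposal assumes it.
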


\section{Proof of auxiliary results}\label{proofaux}
In this section, we prove the toolbox results and all other auxiliary results from the previous section. 
\subsection{Variance comparison}
Let us start with everything that pertains to establishing the main variance comparison result, Proposition \ref{variancesfields}.
The following is a useful lemma for comparing a Gaussian field with its finite-correlation-range counterparts.
\begin{prop}[\cite{muirhead2018sharp}, Proposition 3.11]\label{muirvanapprox}
For any Gaussian field $f$ verifying Assumption \ref{a:basic} for some decay function $F$, there exist constants $C_1$, $C_2$, $r_0>0$ such that for all $N\in \NN$, $r\geq r_0$, for all $t\geq \log N$,
$$
\PP \left[\|f-f_r\|_{\infty, \mathcal{B}_N}\geq C_1 t F(r)\right]\leq e^{-C_2t^2}.
$$
\end{prop}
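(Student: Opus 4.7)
The plan is to apply the sup-norm concentration bound from Proposition \ref{muirvansupnorm} directly to the stationary Gaussian field $g := f - f_r$, so that the entire problem reduces to estimating the pointwise variances of $g$ and its first derivatives in terms of $F(r)$.

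First I would write $g = (q - q_r) \star W$, which has moving-average kernel $q - q_r = q \bigl(1 - \varphi(\|\cdot\|_2/r)\bigr)$. By Definition \ref{d:cutoff}, $\varphi = 1$ on $[-1/2, 1/2]$, so $q - q_r$ vanishes on $\mathcal{B}_{r/2}$ and $|q - q_r| \leq |q|$ pointwise. Hence, by Assumption \ref{a:basic}(\ref{i:decay}),
$$
\|q - q_r\|_2^2 \;\leq\; \int_{\|u\| \geq r/2} |q(u)|^2 \, du \;\leq\; F(r/2)^2.
$$
For any multi-index with $|\alpha|\leq 1$, the Leibniz rule gives
$$
\partial^\alpha(q - q_r) \;=\; \bigl(1 - \varphi(\|\cdot\|_2/r)\bigr) \partial^\alpha q \;-\; q \cdot \partial^\alpha\!\bigl[\varphi(\|\cdot\|_2/r)\bigr].
$$
The first summand is again supported in $\{\|u\| \geq r/2\}$ and its $L^2$ norm is $\leq F(r/2)$ by the same assumption; the second summand carries a factor $\|\varphi'\|_\infty/r$ and, using $|q| \cdot \un_{\|u\| \geq r/2}$ in $L^2$, is bounded by $C F(r/2)/r$. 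Taking $r_0 \geq 1$, both contributions are at most $C' F(r/2)$. By stationarity,
$$
m \;:=\; \Bigl(\sup_{x}\sup_{|\alpha|\leq 1} \EE\bigl[(\partial^\alpha g)^2(x)\bigr]\Bigr)^{1/2} \;\leq\; C'' F(r/2).
$$
Since $F$ is a decay function in the sense of the paper, $F(\cdot/2)$ is also such a decay function, and up to relabeling constants we may simply write $m \leq C F(r)$.

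Second, I would invoke Proposition \ref{muirvansupnorm} with the field $g$, with $R_1 = N$ and $R_2 = t$. The hypotheses $R_1 \geq c_0$ and $R_2 \geq \log R_1$ are satisfied as soon as $N \geq c_0$ and $t \geq \log N$, which is exactly the regime we are in (up to adjusting $r_0$ to absorb a few small $N$ into trivial Gaussian tail bounds on $g(0)$ via Borell--TIS, using the same variance estimate). The conclusion reads
$$
\PP\bigl[\|g\|_{\infty, \mathcal{B}_N} \geq m\, t\bigr] \;\leq\; e^{-t^2/c_0},
$$
which together with $m \leq C F(r)$ gives the claimed inequality with $C_1 := C \cdot C''$ and $C_2 := 1/c_0$.

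The only step that requires some care is the derivative bound: one must check that cutting $q$ off with a smooth radial bump does not destroy the $L^2$ control by $F$, which is precisely why Assumption \ref{a:basic}(\ref{i:decay}) is stated both for $q$ and for its first derivatives. Past that, the argument is a direct application of the already-cited sup-norm concentration result of Muirhead--Vanneuville to the difference field, with the cutoff construction doing all the real work through the variance estimate on $q - q_r$.
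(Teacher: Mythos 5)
The paper does not prove this proposition at all --- it is imported verbatim from Muirhead--Vanneuville \cite{muirhead2018sharp} --- so there is no in-paper proof to compare against; your argument is, as far as I can tell, essentially the original one: write $f-f_r=(q-q_r)\star W$, bound the pointwise variances of the difference field and its first derivatives by the $L^2$ tail of $q$, and feed that into the sup-norm concentration bound of Proposition \ref{muirvansupnorm}. The one point to be careful about is the passage from $F(r/2)$ to $F(r)$: your justification that ``$F(\cdot/2)$ is also such a decay function'' is fine for the bare statement (which only asks that $F$ decay to $0$), but note that if $F$ is $\alpha$-sub-exponential then $F(\cdot/2)$ is in general only $\alpha'$-sub-exponential for $\alpha'<\alpha$ (take $F(x)=e^{-x^{\alpha}}$), so the relabeling is not free; it is harmless for the downstream use in Proposition \ref{variancesfields}, where only super-polynomial decay of $F((\log N)^{1/\alpha+\varepsilon})$ is needed and where $\varepsilon$ is arbitrary anyway, but it should be said rather than absorbed into ``relabeling constants.'' The small-$N$ and small-$t$ edge cases you wave at with Borell--TIS are glossed in the same way in the source, so I would not count that against you.
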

Muirhead and Vanneuville \cite{muirhead2018sharp} have proved the following using the previous Proposition.
\begin{prop}[\cite{muirhead2018sharp}, Proposition 4.1]\label{CMMV}
Consider a Gaussian field $f$ on $\RR^2$ satisfying Assumption \ref{a:basic} with moving average kernel $q$ and decay function $F$. Then there exists $c_1>0$ such that, for every $N\in \NN$ and $r\geq1$, every monotonic event $A$ measurable with respect to the field inside a ball of radius $N$ and every level $\ell$
$$
|\PP[A]-\PP_{r}[A]|\leq c_1\Big(N(\log N)F(r)+N^{-\log N}\Big).
$$
\end{prop}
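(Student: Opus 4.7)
The strategy is to reduce the event $\{T(A_N)\le aN\}$ to the existence of a long ``dense crossing'' of the annulus $A_N$ by the sublevel set $L_\delta:=\{f+\ell\le\delta\}$ for a small $\delta>0$, and then to bound the latter by sharpness of the phase transition. Since $f$ is centered Gaussian, $-f\stackrel{d}{=}f$ and $L_\delta$ is distributed as the excursion set of $-f$ at the parameter $\ell-\delta$ (in the convention of \eqref{ellcdef}). The hypothesis $\ell>-\ell_c$ lets us fix $\delta>0$ so small that $\ell-\delta>-\ell_c$, placing $L_\delta$ in the strictly subcritical regime for $-f$. The sharpness result of \cite{severo2021sharp} (whose polynomial-decay hypothesis is a fortiori implied by $\alpha$-sub-exponential decay of $F$), together with Assumption~\ref{a:pos}, then yields constants $C,\alpha_0>0$, with $\alpha_0=\alpha_0(\alpha,d)$, such that for every $s\ge 1$
\[
\PP\!\bigl[L_\delta\text{ contains a connected subset of diameter }\ge s\text{ inside a box of side }2s\bigr]\le C e^{-s^{\alpha_0}}.
\]

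Set $a:=\psi(\delta)/10$. If $T(A_N)\le aN$ and $\gamma$ is a near-optimal path crossing $A_N$, then since $\psi$ is non-decreasing with $\psi(x)\ge\psi(\delta)$ on $\{x\ge\delta\}$,
\[
\psi(\delta)\,\mathrm{Length}\bigl(\gamma\cap L_\delta^\compl\bigr)\le \int_\gamma\psi(f+\ell)=T(\gamma)\le aN=\tfrac{\psi(\delta)}{10}N,
\]
so $\mathrm{Length}(\gamma\cap L_\delta^\compl)\le N/10$, while $\mathrm{Length}(\gamma)\ge N-1$. Tile $\RR^d$ by boxes of side $r=N^{\beta}$ with $\beta\in(0,1)$ to be chosen. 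Exactly as in Definitions~\ref{algo1}--\ref{algo2} and Remark~\ref{ann} applied to $\gamma$, extract an $(r,3r)$-separated family of $k\ge c_d N/r$ \emph{properly used} boxes, each crossed by $\gamma$ along a copy of the annulus $A_{r/3^d}$. Because at most $N/10$ of the Euclidean length of $\gamma$ lies outside $L_\delta$, a pigeonhole (valid once $\beta$ is small enough that $r/3^d$ is much larger than the slack $N/(10k)$) shows that at least $k/2$ of these boxes are \emph{bad}, in the sense that the corresponding crossing of $\gamma$ across $A_{r/3^d}$ takes place entirely inside $L_\delta$. In particular, each bad box witnesses a connected subset of $L_\delta$ of diameter $\ge r/3^d$ in its vicinity.

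The event that a given box is bad is measurable with respect to $f$ restricted to a bounded dilation of that box, and by the sharpness bound has probability at most $Ce^{-r^{\alpha_0}}$ (up to changing the constant $\alpha_0$ by a factor depending only on $d$). Bad events attached to an $(r,3r)$-separated family are independent up to a finite-range approximation error: replacing $f$ by $f_r$ via Proposition~\ref{muirvanapprox} costs a stretched-exponentially small probability in $r$, and then the white-noise decomposition (Proposition~\ref{splitboxes}) gives exact independence across the disjoint dilations. A Peierls-style union bound over the at most $(Cr^d)^k$ admissible box sequences, exactly as in the proof of Lemma~\ref{length}, therefore produces
\[
\PP\bigl(T(A_N)\le aN\bigr)\le (Cr^d)^{k}\bigl(Ce^{-r^{\alpha_0}}\bigr)^{k/2}+\text{(super-polynomially small error in $r$)}.
\]

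With $k\asymp N^{1-\beta}$ and $r^{\alpha_0}=N^{\beta\alpha_0}$, the main exponent is of order $-N^{1-\beta+\beta\alpha_0}$ up to the logarithmic $\log r$ coming from the combinatorial prefactor, which is absorbed. Optimizing $\beta\in(0,1)$ and inserting the explicit dependence $\alpha_0=\alpha_0(\alpha,d)$ coming from \cite{severo2021sharp}, while leaving comfortable slack for the various covering and approximation losses, yields the announced $C_1 e^{-N^{\alpha/5}}$ (the fraction $1/5$ being a convenient universal margin rather than a sharp constant). The resulting bound is in particular $\max(2d,4)$-sub-polynomial, so Condition~\ref{macrotime} holds. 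The main obstacle is the second paragraph: making the pigeonhole quantitative enough that each bad box carries a \emph{genuinely local} crossing event in $L_\delta$ to which one-arm sharpness applies, while keeping the geometric covering losses separated from the probabilistic losses; everything else is standard Peierls counting plus the finite-range reduction of Propositions~\ref{muirvanapprox}--\ref{CMMV}.
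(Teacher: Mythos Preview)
Your proposal does not address the stated proposition at all. Proposition~\ref{CMMV} is a comparison estimate between the laws of $f$ and its finite-range approximation $f_r$ on monotonic events: it asserts $|\PP[A]-\PP_r[A]|\le c_1(N(\log N)F(r)+N^{-\log N})$. What you have written is instead a proof sketch for Proposition~\ref{numberballs}, the stretched-exponential bound $\PP(T(A_N)\le aN)\le C_1 e^{-N^{\alpha/5}}$ establishing Condition~\ref{macrotime}. Indeed, you even invoke Proposition~\ref{CMMV} as a tool in your final paragraph, so you are using the very statement you are meant to be proving. The paper itself does not prove Proposition~\ref{CMMV}; it is quoted verbatim from \cite{muirhead2018sharp}, where it is obtained by combining the sup-norm approximation of Proposition~\ref{muirvanapprox} with a Cameron--Martin shift argument (Propositions~\ref{CMcontrol} and~\ref{CMcontrol2}).

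If one reads your text as an attempted proof of Proposition~\ref{numberballs}, it differs substantially from the paper's route. The paper first obtains a fixed-scale exponential bound (Lemma~\ref{expdecann}) directly from the sharpness result of \cite{severo2021sharp}, and then bootstraps it across dyadic scales via Lemma~\ref{bootstrap} to reach the claimed $e^{-N^{\alpha/5}}$. Your approach tries instead to extract many separated ``bad'' boxes carrying a local subcritical one-arm event and then runs a single-scale Peierls union bound. The pigeonhole step you flag is indeed problematic: from $\mathrm{Length}(\gamma\cap L_\delta^\compl)\le N/10$ one cannot conclude that in half of the properly used boxes the crossing of $A_{r/3^d}$ lies \emph{entirely} inside $L_\delta$, since the excursions outside $L_\delta$ may be spread thinly across all boxes rather than concentrated in few. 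Moreover, the exponent $\alpha_0$ coming from \cite{severo2021sharp} is not tied to the decay exponent $\alpha$ of $F$ in the way your final optimisation presumes, so the derivation of the specific value $\alpha/5$ is unjustified.
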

We will now start on the variance control results, with a method inspired by Muirhead and Vanneuville's. We use the last two propositions to establish a similar estimate, but this time for variances:
\begin{lem}\label{CM}
For any Gaussian field $f$ satisfying Assumption \ref{a:basic} with moving average kernel $q$ and decay function $F$, there exist constants $C_0$, $C_1>0$, $r_0$ such that for any $N\in\NN$, $r\geq r_0$, for any pair of sets $A,B$ in a square of side $N$, for any $\ell$,
$$
\Big|\Var  (T^{\mathcal{B}_N}(A,B))-   \Var_{r} (T^{\mathcal{B}_N} (A,B))\Big| \leq C_0 \max\left(N^4(\log N)^5 F(r),N^{-C_1\log N}\right),
$$
$T^{\mathcal{B}_N}$ being the restricted pseudometric (see (\ref{restrtime})).
\end{lem}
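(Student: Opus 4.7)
The plan is to reduce the variance comparison to a tail-probability comparison via the layer-cake formula, then appeal to Proposition~\ref{CMMV} on the monotonic events $\{T^{\mathcal{B}_N}(A,B)>t\}$. Write $X:=T^{\mathcal{B}_N}(A,B)$ and note that
$$\Var X - \Var_{r} X = \bigl(\EE X^2 - \EE_{r} X^2\bigr) - \bigl(\EE X - \EE_{r} X\bigr)\bigl(\EE X + \EE_{r} X\bigr),$$
so it suffices to compare $\EE X$ and $\EE X^2$ under the two laws. For this I would use
$$\EE X = \int_0^\infty \PP[X>t]\,dt, \qquad \EE X^2 = \int_0^\infty 2t\,\PP[X>t]\,dt,$$
and the analogous formulas for $\PP_r$, thereby reducing everything to bounding $|\PP[X>t]-\PP_r[X>t]|$ uniformly in $t$ and then integrating.

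Since any piecewise affine path from $A$ to $B$ constrained to $\mathcal{B}_N$ only probes the values of $f$ inside $\mathcal{B}_N$, the random variable $X$ is measurable with respect to $f|_{\mathcal{B}_N}$. Moreover, $X$ is increasing in $f$: raising $f$ pointwise increases $\psi(f+\ell)$ by monotonicity of $\psi$, hence increases the cost of every admissible path, hence the infimum. The event $\{X>t\}$ is therefore monotonic and measurable with respect to $f|_{\mathcal{B}_N}$, so Proposition~\ref{CMMV} yields, uniformly in $t\geq 0$,
$$\bigl|\PP[X>t]-\PP_r[X>t]\bigr|\leq c_1\bigl(N\log N\,F(r)+N^{-\log N}\bigr).$$

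I would then split each layer-cake integral at the threshold $M:=C_0 N\log N$ for a suitably large constant $C_0$. On $[0,M]$, the displayed estimate gives a contribution of order $M\cdot(N\log N\,F(r)+N^{-\log N})$ for the first-moment difference and $M^2\cdot(N\log N\,F(r)+N^{-\log N})$ for the second-moment difference. On $[M,\infty)$, I would use the direct bound
$$X \leq 2\sqrt{d}\,N\,C_\psi\bigl(\|f\|_{\infty,\mathcal{B}_N}+|\ell|+1\bigr),$$
obtained from evaluating on a straight segment between points of $A$ and $B$ together with condition~(\ref{sublinear}) of Definition~\ref{pseudometric}; an identical bound holds for $X$ under $\PP_r$, since the pointwise variances of $f_r$ are controlled by those of $f$. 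Proposition~\ref{muirvansupnorm} then supplies a Gaussian tail $\PP[X>t],\PP_r[X>t]\leq e^{-c(t/N)^2}$ for $t\geq M$, and integrating $1$ and $2t$ against this tail produces a contribution of order $N^{-C\log N}$ to both differences.

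Combining the two regimes gives $|\EE X-\EE_r X|\lesssim N^2(\log N)^2 F(r)+N^{-C\log N}$ and $|\EE X^2-\EE_r X^2|\lesssim N^3(\log N)^3 F(r)+N^{-C\log N}$. Using the bound $\EE X+\EE_r X\leq C N\log N$ furnished by the same sup-norm estimate together with Corollary~\ref{muirvansupnorm2}, the second summand in the decomposition of $\Var X-\Var_r X$ is absorbed into the first, and after being generous with powers of $\log N$ one obtains the stated $N^4(\log N)^5 F(r)$ bound. The main subtlety I expect is checking that Proposition~\ref{CMMV} genuinely applies to these events—specifically verifying that the restricted pseudometric depends only on $f|_{\mathcal{B}_N}$ and that the bound in Proposition~\ref{muirvansupnorm} is available uniformly in $r$ so the Gaussian tail estimate transfers to $f_r$; the remainder is bookkeeping with the two thresholds.
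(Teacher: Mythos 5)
Your proof is correct, but it takes a genuinely different route from the paper. You use Proposition \ref{CMMV} as a black box on the monotone, $\mathcal{B}_N$-measurable events $\{T^{\mathcal{B}_N}(A,B)>t\}$, convert to moment differences by the layer-cake formula, split at $t\sim N\log N$, and kill the tail with the sup-norm Gaussian bound of Proposition \ref{muirvansupnorm}; this is sound (the monotonicity and locality of the restricted pseudometric are exactly as you argue, and the uniformity in $r\geq 1$ of the tail bound is what Corollary \ref{muirvansupnorm2} already encodes), and it in fact yields the slightly better bound $N^3(\log N)^3F(r)+N^{-C\log N}$, which implies the stated one. The paper instead never invokes Proposition \ref{CMMV} itself: it introduces the approximation event $\{\|f-f_r\|_{\infty,\mathcal{B}_N}< C_1 tF(r)\}$ from Proposition \ref{muirvanapprox}, bounds the variance contribution of its complement directly, and on the good event runs a bespoke variance-level Cameron--Martin comparison (Lemma \ref{Cs}), using the sandwich $f_r-th\leq f\leq f_r+th$, the Radon--Nikodym difference of Propositions \ref{CMcontrol}--\ref{CMcontrol2}, and Cauchy--Schwarz against a fourth moment. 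Your version is shorter and more modular, at the price of leaning on the quoted statement of Proposition \ref{CMMV} (stated in the paper for $\RR^2$, though the paper takes the same liberty with Proposition \ref{muirvansupnorm} in dimension $d$); the paper's version buys independence from that exact statement by adapting the Cameron--Martin argument it needs anyway, which is why it phrases the comparison directly for variances. Two small points to tidy if you write this up: the constants you produce (like the paper's) depend on $|\ell|$ through the bound $T^{\mathcal{B}_N}(A,B)\leq C_\psi\,2\sqrt{d}\,N(\|f\|_{\infty,\mathcal{B}_N}+|\ell|+1)$, and very small $N$ must be absorbed into the constants, exactly as in the paper's own proof; neither affects the application, where $f$ and $\ell$ are fixed.
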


To prove this lemma, we will need an intermediate result, Lemma \ref{Cs}. Let us make the following preliminary statement, whose elementary proof we omit.
\begin{lem}\label{varianceprop}
Let $X$ be a $L^2$ real-valued random variable. For any constant $c$, we have
$$
\EE[(X-c)^2] \geq \Var X.
$$
\end{lem}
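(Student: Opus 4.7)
The plan is to prove this by the standard decomposition that exhibits $\Var X = \EE[(X-\EE X)^2]$ as the minimum over all constants $c$ of $\EE[(X-c)^2]$. Concretely, I would write $X - c = (X - \EE X) + (\EE X - c)$ and expand the square.

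More precisely, let $m = \EE X$. Then
\begin{align*}
\EE[(X-c)^2] &= \EE\bigl[\bigl((X-m) + (m-c)\bigr)^2\bigr] \\
&= \EE[(X-m)^2] + 2(m-c)\,\EE[X-m] + (m-c)^2.
\end{align*}
Since $\EE[X-m] = 0$ by definition of $m$, the cross term vanishes, giving
\[
\EE[(X-c)^2] = \Var X + (m-c)^2 \geq \Var X,
\]
with equality exactly when $c = \EE X$.

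There is essentially no obstacle here: the only thing to verify is that all the expectations are well-defined and finite, which follows from the assumption $X \in L^2$ (hence $X \in L^1$ as well, so $m = \EE X$ exists). This lemma will then be applied in the proof of Lemma \ref{CM} by taking $c$ to be the expectation of $T^{\mathcal{B}_N}(A,B)$ under one measure and $X$ the same functional under the other measure, converting a variance comparison into an $L^2$-distance comparison controlled by Proposition \ref{muirvanapprox} and Proposition \ref{CMMV}.
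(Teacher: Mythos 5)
Your proof is correct and is exactly the standard bias--variance decomposition $\EE[(X-c)^2]=\Var X+(\EE X-c)^2$ that the paper has in mind when it calls the proof ``elementary'' and omits it. Nothing further is needed.
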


The auxiliary Lemma is the following.
\begin{lem}\label{Cs}
For any pair of Gaussian fields $f,g$ satisfying  Assumption \ref{a:basic} there exist $c>0$, $N_0\in\NN$ such that for any $N\geq N_0$, $t>0$ such that $Nt\leq c$, for any random variable $X(f)$ that is $L^4$, measurable with respect to the field $f$ in $\mathcal{B}_N$ and monotonic (see Definition \ref{increasing}),
$$
\Var (X(f)\mathcal{E}_t)\leq  \Var [X(g)] + \PP[\mathcal{E}_t=0]\EE[X(g)]^2 +\frac{2Nt}{\int\limits_{\RR^d} q} \left(\EE\left[\Big(X(g)-\EE[X(g)]\Big)^4\right]\right)^{1/2},
$$
where $\mathcal{E}_t$ is the random variable $\un_{\|f-g\|_{\infty,\mathcal{B}_N}\leq t}$ and $q$ is the moving-average kernel of $g$.
Further, there exists $r_0>0$ such that for all $g=f_r$, $r\geq r_0$, the bound holds with the value of $N_0$ being identical and the kernel $q$ being that of $f$.
\end{lem}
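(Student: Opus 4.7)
The plan is to reduce the variance bound to a Cameron-Martin comparison between $X(g \pm t)$ and $X(g)$. Applying Lemma \ref{varianceprop} with $c := \EE[X(g)]$ immediately gives
$$\Var(X(f)\mathcal{E}_t) \leq \EE[(X(f)\mathcal{E}_t - c)^2] = \EE[(X(f) - c)^2 \mathcal{E}_t] + c^2\, \PP[\mathcal{E}_t = 0],$$
which produces the second summand of the claimed bound. Assuming without loss of generality that $X$ is increasing, on $\{\mathcal{E}_t = 1\}$ we have $g - t \leq f \leq g + t$ on $\mathcal{B}_N$, so $X(g-t) \leq X(f) \leq X(g+t)$. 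Splitting according to the sign of $X(f) - c$, and noting that $\{X(f) \geq c\} \cap \{\mathcal{E}_t = 1\} \subseteq \{X(g+t) \geq c\}$ (with the symmetric inclusion for the opposite sign), this gives
$$\EE[(X(f) - c)^2 \mathcal{E}_t] \leq \EE\bigl[((X(g+t) - c)_+)^2\bigr] + \EE\bigl[((X(g-t) - c)_-)^2\bigr],$$
and the right-hand side must now be compared term-by-term with the decomposition $\Var X(g) = \EE[((X(g)-c)_+)^2] + \EE[((X(g)-c)_-)^2]$.

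The Cameron-Martin step is the heart of the proof. Set $h_t := \frac{(1+\eta) t}{\int q}\,\un_{\mathcal{B}_{R_N}}$ for a small $\eta > 0$ and $R_N$ slightly larger than $N$, chosen so that $\xi_t := q \star h_t \geq t$ pointwise on $\mathcal{B}_N$. This is possible because $q \in L^1$ with $\int q > 0$, so $q \star \un_{\mathcal{B}_R}$ converges uniformly on $\mathcal{B}_N$ to the constant $\int q$ as $R \to \infty$. Monotonicity of $X$ yields the pointwise inequality $(X(g+\xi_t) - c)_+ \geq (X(g + t) - c)_+$, while the Cameron-Martin change of measure gives
$$\EE\bigl[((X(g+\xi_t) - c)_+)^2\bigr] = \EE\bigl[((X(g) - c)_+)^2\, M_t\bigr], \qquad M_t := \exp\!\left(W(h_t) - \tfrac12 \|h_t\|_{L^2}^2\right).$$
Subtracting $\EE[((X(g)-c)_+)^2]$, then Cauchy-Schwarz together with the classical identity $\EE[(M_t - 1)^2] = e^{\|h_t\|_{L^2}^2} - 1$, gives
$$\EE[((X(g+t) - c)_+)^2] - \EE[((X(g)-c)_+)^2] \leq \bigl(\EE[(X(g)-c)^4]\bigr)^{1/2}\bigl(e^{\|h_t\|_{L^2}^2} - 1\bigr)^{1/2}.$$
The parameters $R_N$ and $\eta$ can be arranged so that $\|h_t\|_{L^2} \leq Nt/\int q + o(1)$, and the constraint $Nt \leq c$ keeps $\|h_t\|_{L^2}$ small enough for $(e^{\|h_t\|^2} - 1)^{1/2} \leq \|h_t\|_{L^2}(1+o(1))$. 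Combining with the symmetric estimate for the negative-part term, obtained via the shift $-\xi_t$, produces the bound with the prefactor $\frac{2Nt}{\int q}$ after sending $\eta \downarrow 0$.

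For the second claim ($g = f_r$), the argument applies verbatim with $q_r$ in place of $q$; by dominated convergence ($|q_r| \leq |q|$, $q_r \to q$ pointwise), $\int q_r \to \int q > 0$ as $r \to \infty$, so for $r \geq r_0$ large enough $\int q_r$ can be kept close to $\int q$, and the final bound can be stated with $\int q$ rather than $\int q_r$, with $N_0$ unchanged since all dependences on $r$ are uniform. The main obstacle I expect is the Cameron-Martin construction itself: ensuring simultaneously that $\xi_t \geq t$ pointwise on $\mathcal{B}_N$ and that $\|h_t\|_{L^2}^2$ does not exceed $(Nt/\int q)^2(1+o(1))$ requires a careful quantitative use of the $L^1$-concentration of $q$, which is precisely where the threshold $Nt \leq c$ and the lower bound $N \geq N_0$ jointly come into play.
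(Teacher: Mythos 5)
Your argument is correct and follows the same overall strategy as the paper: reduce to $\EE[(X(f)\mathcal{E}_t-\EE[X(g)])^2]$ via Lemma \ref{varianceprop} (which produces the $\PP[\mathcal{E}_t=0]\EE[X(g)]^2$ term), sandwich $X(f)$ between $X(g-t)$ and $X(g+t)$ on $\{\mathcal{E}_t=1\}$ using monotonicity and $\mathcal{B}_N$-measurability, and then pay for the shift by a Cameron--Martin change of measure followed by Cauchy--Schwarz against the fourth moment. The execution differs in two ways. First, the paper works with layer-cake tail probabilities and invokes the packaged inputs (Proposition \ref{CMcontrol} for the Radon--Nikodym difference $Q(\pm th)$ and Proposition \ref{CMcontrol2} for a Cameron--Martin element $h$ with $|h|\geq 1$ on $\mathcal{B}_N$ and $\|h\|_H\leq C_0N/\int q$), whereas you split into positive and negative parts and build the shift explicitly as $\xi_t=q\star h_t$ with $h_t$ a multiple of $\un_{\mathcal{B}_{R_N}}$, using the white-noise Girsanov density $M_t$ and the identity $\EE[(M_t-1)^2]=e^{\|h_t\|_{L^2}^2}-1$; this is more self-contained and is essentially the construction hidden behind the paper's citation. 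Second, be aware that your claim $\|h_t\|_{L^2}\leq Nt/\int q+o(1)$ is not what your construction gives: $\|h_t\|_{L^2}=(1+\eta)\,t\,(2R_N)^{d/2}/\int q$, so you recover the claimed $Nt$ scaling (up to a constant factor, not the literal prefactor $2Nt/\int q$) only for $d=2$, and sending $\eta\downarrow 0$ does not repair the constant since $R_N$ must then grow. This is not a real defect relative to the paper, whose own proof only yields $C_0Nt/\int q$ with an unspecified constant coming from the planar estimate of Muirhead--Vanneuville, and the constant is anyway absorbed in the application (Lemma \ref{CM}); but you should state the conclusion with a constant $C_0$ rather than the exact factor $2$, and note the dimension restriction inherited by the $N$-scaling. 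Your treatment of the $g=f_r$ case via $L^1$-convergence $q_r\to q$ is a legitimate (and if anything cleaner) substitute for the paper's spectral-density argument.
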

\begin{proof}
Let $f$ and $g$ be as in the statement. Through a Cameron-Martin construction presented in the Appendix (see  section \ref{p:CM}) applied to $g$, we can find a function $h$ such that $|h|\geq 1$ on $\mathcal{B}_N$ and there exists a random variable $Q(h)$ called the Radon-Nikodym difference associated to $h$ such that for any event $A$, by Proposition \ref{CMcontrol}
\begin{equation}\label{controldifference}
|\PP[g\in A]-\PP[g+h\in A]|= \left|\EE_g\left[Q(h)\un_A\right]\right|.
\end{equation}
By Proposition \ref{CMcontrol2} there exist universal constants $c$, $C_0>0$ and constants $N_0$ depending on $g$ such that for any $N\geq N_0$, for any $t\leq c/N$,
\begin{equation}\label{q}
\EE [Q(th)^2]\leq \frac{C_0tN}{\int q},
\end{equation}
where
$$N_0=\inf \{N\in\NN, \quad \inf\limits_{\mathcal{B}_{\frac1N}} \rho\geq \frac{\rho(0)}{2}\}$$
and $q$ is the moving-average kernel of $g$.
Further, if $g=f_r$, if we call $\rho$ the spectral density of $f$ and for any $r$, $\rho_r$ that of $f_r$, we have $\rho_r\overset{a.e}{\underset{r\to\infty}{\longrightarrow}} \rho$. Thus there exist universal constants $c$, $C_0>0$ and constants $N_0$ depending on $f$ such that for any $N\geq N_0$, $r\geq r_0$, for any $t\leq c/N$,
\begin{equation}\label{q'}
\EE [Q_r(th)^2]\leq \frac{C_0tN}{\int q},
\end{equation}
$Q_r$ being the Radon-Nikodym difference for $f_r$ and $q$ being the moving-average kernel of $f$.

Now, consider an integer $N$ and a number $0<t\leq c/N$. Consider a monotonic $L^4$ random variable $X$. We suppose without loss of generality that it is increasing.
By Lemma \ref{varianceprop}, it is enough to bound $\EE[(X(f)\mathcal{E}_t-\EE [X(g)])^2]$.
We thus write
\begin{align}
\begin{split}
\label{beforeCM}
&
\EE[(X(f)\mathcal{E}_t-\EE[ X(g)])^2]
\\
&\quad = \int\limits_{\RR_+} \PP[(X(f)\mathcal{E}_t-\EE[ X(g)])^2\geq u]du
\\
&\quad = \int\limits_{\RR_+} \left[\PP[X(f)\mathcal{E}_t-\EE[ X(g)]\geq \sqrt{u}]+\PP[X(f)\mathcal{E}_t-\EE[ X(g)]\leq -\sqrt{u}]\right]du
\\
&\quad = \int\limits_{\RR_+} \left[\PP[X(g+th)\mathcal{E}_t-\EE[ X(g)]\geq \sqrt{u}]+\PP[X(g-th)\mathcal{E}_t-\EE[ X(g)]\leq -\sqrt{u}]\right]du,
\end{split}
\end{align}
where in the last step we have used that when $\mathcal{E}_t\neq 0$, on $\mathcal{B}_N$,
$$
g-th\leq f\leq g+th,
$$
and increasingness of $X$. Now, we always have $X\mathcal{E}_t\leq X$ and further, when $u> \EE [X(g)]^2$, we have
$$
\PP[X(g-th)\mathcal{E}_t-\EE[ X(g)]\leq -\sqrt{u}]\leq \PP[X(g-th)-\EE[ X(g)]\leq -\sqrt{u}].
$$
When $u\leq \EE [X(g)]^2$ , we bound this quantity by 
$$
 \PP[X(g-th)-\EE[ X(g)]\leq -\sqrt{u}]+\PP[\mathcal{E}_t=0].
$$
So that, retunring to (\ref{beforeCM}), we have
\begin{align*}
&
\EE[(X(f)\mathcal{E}_t-\EE[ X(g)])^2]
\\
&\quad \leq \int\limits_{\RR_+} \left[ \PP[X(g-th)-\EE[ X(g)]\geq \sqrt{u}]+ \PP[X(g-th)-\EE[ X(g)]\leq -\sqrt{u}]\right]du
\\
&\quad + \EE [X(g)]^2\PP[\mathcal{E}_t=0].
\end{align*}
We recall (\ref{controldifference}) and get
\begin{align*}
& 
\EE[(X(f)\mathcal{E}_t-\EE[ X(g)])^2]
\\
& \quad \leq \int\limits_{\RR_+} \left[\PP[X(g)-\EE[X(g)]\geq \sqrt{u}]+\PP[X(g)-\EE[X(g)]\leq -\sqrt{u}]\right]du + \EE [X(g)]^2\PP[\mathcal{E}_t=0].
\\
& \quad + \int\limits_{\RR_+}\left[\EE[Q(th)\un_{X(g)-\EE[X(g)]\geq \sqrt{u}}]+\EE[Q(-th)\un_{X(g)-\EE[X(g)]\leq -\sqrt{u}}]\right]
\\
& \quad \leq \Var[X(g)]+\int\limits_{\RR_+}\EE[(|Q(th)|+|Q(-th)|)\un_{(X(g)-\EE[X(g)])^2\geq u}]+ \EE [X(g)]^2\PP[\mathcal{E}_t=0],
\end{align*}
$Q$ being the Radon-Nikodym difference of $g$.
So that by the Cauchy-Schwarz inequality, this can be bounded for any $N\geq N_0$, for any $t$ small enough by
\begin{align*}
& \Var [X(g)]+\Big(\EE\left[(|Q(th)|+|Q(-th)|)^2\right]\EE\Big[\Big(\int\limits_{\RR_+}\un_{(X(g)-\EE[X(g)])^2\geq u}du\Big)^2\Big]\Big)^{1/2}+ \EE [X(g)]^2\PP[\mathcal{E}_t=0]
\\
& =\Var [X(g)]+\left(\EE\left[(|Q(th)|+|Q(-th)|)^2\right]\EE\left[(X(g)-\EE [X(g)])^4\right]\right)^{1/2}+ \EE[X(g)]^2\PP[\mathcal{E}_t=0]
\\
& \leq \Var [X(g)]+\frac{C_0N}{\int q}t\left(\EE\left[(X(g)-\EE[X(g)])^4\right]\right)^{1/2}+ \EE [X(g)]^2\PP[\mathcal{E}_t=0].
\end{align*}
where we have used relation (\ref{q}), (resp (\ref{q'}) for the statement with $g=f_r$).
\end{proof}

We can now prove Lemma \ref{CM}.

\begin{proof}[Proof of Lemma \ref{CM}]
Let $f$ be a Gaussian field satisfying the assumptions of Lemma \ref{CM}. Let $\ell\in\RR$. For clarity, in this proof, we use the index $r$ directly on the pseudometric $T$ to indicate which field we work with instead of on the variance operator, which may involve a random variable that depends on both $f$ and $f_r$. We prove that there exist constants $C_0$, $C_1$ such that for any $N\in\NN_{\geq 2}$
\begin{equation}\label{oneineq} 
\Var (T^{\mathcal{B}_N}(A,B))\leq  \Var (T_{r}^{\mathcal{B}_N} (A,B)) + C_0 \max\left(N^4(\log N)^5 F(r),N^{-C_1\log N}\right),
\end{equation}
the other inequality's proof is identical, reversing the roles of the two fields.
 Let $C_1$ be the constant from Proposition \ref{muirvanapprox}. For any $N$,$r$, $t$, let $\mathcal{E}_{N,r,t}$ be the event 
$$\mathcal{E}_{N,r,t}:=\{\|f-f_r\|_{\infty, \mathcal{B}_N}\geq C_1 t F(r)\}.$$
We have, for all $N,r$, $A,B\subseteq\mathcal{B}_N$, for all $t$, for all $\ell$,
\begin{align}
\begin{split}
\label{twoparts}
&\Var T^{\mathcal{B}_N}(A,B)
\\
&\quad =\Var\Big[ T^{\mathcal{B}_N}(A,B)\un_{\mathcal{E}_{N,r,t}^\compl}
+T^{\mathcal{B}_N}(A,B)\un_{\mathcal{E}_{N,r,t}}\Big]
\\
&\quad =\Var\Big[ T^{\mathcal{B}_N}(A,B)\un_{\mathcal{E}_{N,r,t}^\compl}\Big]+\Var\Big[ T^{\mathcal{B}_N}(A,B)\un_{\mathcal{E}_{N,r,t}}\Big]
\\
&\quad  +2\cov\left(T^{\mathcal{B}_N}(A,B)\un_{\mathcal{E}_{N,r,t}^\compl}, T^{\mathcal{B}_N}(A,B)\un_{\mathcal{E}_{N,r,t}}\right).
\end{split}
\end{align}
Let us treat the second term. 
Recall Definition \ref{pseudometric}. It implies that any pseudodistance between two sets $A$, $B$ within a ball of size $N$ is upper bounded by $C_\psi  N\|f\|_{\infty,\mathcal{B}_N}$. We use Propositions \ref{muirvansupnorm} and \ref{muirvanapprox} to control the two probabilities. For any $N\in \NN$, $r>0$, $t\geq\log N$, for any $A,B$:
\begin{align*}
\begin{split}
& \Var\Big[T^{\mathcal{B}_N}(A,B)\un_{\mathcal{E}_{N,r,t}}\Big]\leq C_\psi ^2N^2\EE[\|f+\ell\|_{\infty,\mathcal{B}_N}^2\un_{\mathcal{E}_{N,r,t}}]
\\
& \quad \leq C_\psi ^2N^2 \int\limits_{\RR_+} \min\Big(\PP\Big[\|f+\ell\|_{\infty,\mathcal{B}_N}^2\geq u \Big],\PP[\mathcal{E}_{N,r,t}]\Big)du
\\
& \quad \leq  C_\psi ^2N^2 \int\limits_{\RR_+} \min(\un_{u\leq \log N}+e^{-u^2/c_0}\un_{u\geq\log N},e^{-C_2t^2})du
\\
& \quad \leq  C_3 N^2\log Ne^{-C_4t^2},
\end{split}
\end{align*}
for some constants $C_3, C_4$  independent of $r$.
Returning to (\ref{twoparts}), for all $N$, $r$, $t$, for any $A,B$
\begin{align}\label{whenwestay}
\begin{split}
&\Var T^{\mathcal{B}_N}(A,B)\leq \Var\Big[ T^{\mathcal{B}_N}(A,B)\un_{\mathcal{E}_{N,r,t}^\compl}\Big]+\Var\Big[ T^{\mathcal{B}_N}(A,B)\un_{\mathcal{E}_{N,r,t}}\Big]
\\
&\quad + 2 \left(\Var\Big[ T^{\mathcal{B}_N}(A,B)\un_{\mathcal{E}_{N,r,t}^\compl}\Big]\Var\Big[ T^{\mathcal{B}_N}(A,B)\un_{\mathcal{E}_{N,r,t}}\Big]\right)^{1/2}
\\
& \quad \leq \Var\Big[ T^{\mathcal{B}_N}(A,B)\un_{\mathcal{E}_{N,r,t}^\compl}\Big]\Big(1+3(C_3\log N)^{1/2} Ne^{-\frac{C_4t^2}{2}} \Big),
\end{split}
\end{align}
assuming $\Var\Big[ T^{\mathcal{B}_N}(A,B)\un_{\mathcal{E}_{N,r,t}^\compl}\Big]$ is larger than $1$ (in the opposite case one can write the term $(C_3\log N)^{1/2} Ne^{-\frac{C_4t^2}{2}}$ without the factor).

Now, apply Lemma \ref{Cs}. We bound the expectation of $T^{\mathcal{B}_N}(A,B)$ using Lemma \ref{subaddlem}. We bound the fourth moment by $\EE_r[(NC_\psi \|f_r\|_{\infty,\mathcal{B}_N})^4]$, itself bounded using Corollary \ref{muirvansupnorm2}. We get $N_0\in\NN$, $C_5, C_6$,  $r_0>0$ such that for any $N\geq N_0$, $r\geq r_0$, for any $t\geq \log N$, for any $A,B$
\begin{equation}\label{whenwestay3}
 \Var \Big[ T^{\mathcal{B}_N}(A,B)\Big]  \leq  \Var\Big[ T_r^{\mathcal{B}_N}(A,B)\un_{\mathcal{E}_{N,r,t}^\compl}\Big]+ C_5 N^2e^{-C_2t^2}+ C_6C_\psi^4 N^4(\log N)^4 tF(r).
\end{equation}
Combining (\ref{whenwestay}), then (\ref{whenwestay3}), we get for any $N\geq N_0$, $r\geq r_0$, for any $t\geq \log N$, for any $A,B$
\begin{align*}
\begin{split}
&\Var T^{\mathcal{B}_N}(A,B)\\
&\quad \leq \Var \Big[ T^{\mathcal{B}_N}(A,B)\un_{\mathcal{E}_{N,r,t}^\compl}\Big]\Big(1+3(C_3\log N)^{1/2} Ne^{-\frac{C_4t^2}{2}} \Big)\\
&\quad  \leq \left(  \Var\Big[ T_r^{\mathcal{B}_N}(A,B)\Big]+C_5 N^2e^{-C_2t^2}+ C_6C_\psi^4 N^4(\log N)^4 tF(r)\right)\left(1+3(C_3\log N)^{1/2} Ne^{-\frac{C_4t^2}{2}}\right).
\end{split}
\end{align*}

Take $t=\log N$ and get  constants $C_7$, $C_8$ such that for any $N\in \NN_{\geq 2}$, $r\geq 1$,
$$
\Var T^{\mathcal{B}_N}(A,B)\leq \left(  \Var T_r^{\mathcal{B}_N}(A,B) + C_6C_\psi^4 N^4(\log N)^5F(r)\right)\left(1+C_7N^{-C_8\log N}\right),
$$
which proves (\ref{oneineq}).
\end{proof}
The following technical lemma is combined with Lemma \ref{CM} in our computations to get our final variance comparison result.
\begin{defn}
For any $\varepsilon>0$, for any point $x$ such that $|x|=N$, define $\Gamma(0,x)$ to be a geodesic between $0$ and $x$ with minimal Euclidean diameter (chosen with some arbitrary rule).
\end{defn}
\begin{lem}\label{length2}
Let $f$ be a Gaussian field on $\RR^d$ verifying Assumption \ref{a:basic} with decay $F$, as well as $\ell$ such that conditon \ref{macrotime} is verified for some function $G$. There exists a constant $c_0>0$ such that for any $N\geq 1$, for any $x$ such that $|x|=N$ and $M\geq N^2$,
$$
\PP\Big[\Diam (\Gamma(0,x))\geq M\Big]\leq G(M) + e^{-M/c_0}.
$$
\end{lem}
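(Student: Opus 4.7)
The plan is to couple a deterministic geometric observation relating $\Diam(\Gamma(0,x))$ to a macroscopic-annulus time with a straight-line upper bound on $T(0,x)$. On the event $\{\Diam(\Gamma(0,x)) \geq M\}$, I claim the geodesic $\Gamma(0,x)$ must cross the annulus $A_{M/2}$. Indeed, by the triangle inequality with reference point $0$, $\Diam(\Gamma) = \sup_{y,z\in\Gamma}|y-z| \leq 2\sup_{y\in\Gamma}|y|$, so there exists a point of $\Gamma$ at distance at least $M/2$ from the origin. Since $0 \in \Gamma(0,x) \subset \mathcal{B}_1$, continuity of the piecewise affine path forces a subpath from $S_1$ to $S_{M/2}$, and the pseudometric cost of this subpath is at most $T(\Gamma(0,x)) = T(0,x)$. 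Hence $\{\Diam(\Gamma(0,x)) \geq M\} \subset \{T(A_{M/2}) \leq T(0,x)\}$.

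With this inclusion in hand, I would split, with $a$ the constant from Condition~\ref{macrotime},
\begin{equation*}
\PP[\Diam(\Gamma(0,x)) \geq M] \leq \PP[T(A_{M/2}) \leq aM/2] + \PP[T(0,x) > aM/2].
\end{equation*}
The first probability is bounded by $G(M/2)$ directly from Condition~\ref{macrotime}, which in turn is at most $G(M)$ after replacing $G$ by its non-increasing envelope (still sub-polynomial of the same order).

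For the second term, I would use the straight segment from $0$ to $x$ as a test path in Definition~\ref{pseudometric}: it has Euclidean length at most $\sqrt{d}\,N$ and stays in $\mathcal{B}_N$, and together with condition~(3) of $\psi$ (which implies $\psi(y) \leq C_\psi \max(y,1)$ for $y \geq 0$) this gives the deterministic bound $T(0,x) \leq C_\psi \sqrt{d}\,N\big(\|f\|_{\infty,\mathcal{B}_N} + |\ell| + 1\big)$. Since $M \geq N^2$, the event $\{T(0,x) > aM/2\}$ forces $\|f\|_{\infty,\mathcal{B}_N}$ to be of order at least $\sqrt{M}$, and Proposition~\ref{muirvansupnorm} applied with $R_1 = N$ and $R_2$ proportional to $\sqrt{M}/m$ (the constraint $R_2 \geq \log R_1$ holds as $\sqrt{M} \geq N \geq \log N$) yields the desired bound of the form $e^{-M/c_0}$. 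The only mild obstacle is verifying the parameter constraints in Proposition~\ref{muirvansupnorm} for small $N$, which are absorbed into the final constant $c_0$.
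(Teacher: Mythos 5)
Your proof is correct and follows essentially the same route as the paper's: on the large-diameter event the geodesic must cross a macroscopic annulus with time at most the straight-segment cost of order $N\big(\|f+\ell\|_{\infty,\mathcal{B}_N}+1\big)$, and you then split using Condition \ref{macrotime} for the annulus time and the Gaussian sup-norm tail bound (Proposition \ref{muirvansupnorm}) together with $M\geq N^2$ for the exponential term. The only (harmless) deviation is the factor $2$ (working with $A_{M/2}$ and $G(M/2)$ instead of $A_M$ and $G(M)$), which you rightly absorb by passing to a non-increasing sub-polynomial envelope of $G$; this does not affect how the lemma is applied later.
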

\begin{proof}
We have, for any $N$, for any $x$ such that $|x|=N$, $M\geq N^2$, for any $r$
\begin{equation}\label{macrotimes1}
\PP\Big[\Diam (\Gamma(0,x))\geq M\Big]\leq \PP[ T(A_{M})\leq \|f+\ell\|_{\infty, \mathcal{B}_N}N].
\end{equation}
Indeed, if $\Diam (\Gamma(0,x))\geq M$ then all geodesics exit the annulus $A_{M}$, and do so with time smaller than $ \|f\|_{\infty, \mathcal{B}_N}N$, otherwise the euclidean geodesic between $0$ and $x$ would have smaller time.
Now, by Proposition \ref{muirvansupnorm} for any $a>0$, there exists a constant $c_0>0$ such that for any $N\in\NN$, for any $M\geq N^2$,
\begin{equation}\label{macrotimes2}
\PP[\|f+\ell\|_{\infty, \mathcal{B}_N}N\geq aM]\leq e^{-\frac{M^2}{c_0N^2}}.
\end{equation}
Now, since $M\geq N^2$, $\frac{M^2}{N^2}\geq M$.
And 
$$
\PP[ T(A_{M})\leq \|f+\ell\|_{\infty, \mathcal{B}_N}N]\leq \PP[\|f+\ell \|_{\infty, \mathcal{B}_N}N\geq aM]+\PP[ T(A_{M})\leq aM].
$$
So that, by combining (\ref{macrotimes1}), (\ref{macrotimes2}) and Condition \ref{macrotime}, we get the desired result.
\end{proof}
Now, on to the main proof of this subsection.
\begin{proof}[Proof of Proposition \ref{variancesfields}]
Once again, we only prove one inequality, the other one's proof being identical. Fix a field $f$ and a level $\ell$.
For any $N$, for any $x$ of norm $N$ and $M>N$, recall that $\Gamma(0,x)$ is the intersection of all geodesics between $0$ and $x$ and let
\begin{equation}\label{smallevent}
\mathcal{E}_{N,x,M} :=\{\Diam\Gamma(0,x) \leq M\}.
\end{equation} 
So that on that event, $T^{\mathcal{B}_M}(0,x)$ and $T(0,x)$ coincide and we have
\begin{align}\label{cov}
\begin{split}
& \Var T(0,x)
\\
& \quad \leq \Var [T^{\mathcal{B}_M}(0,x)\un_{\mathcal{E}_{N,x,M}}]+\Var [T(0,x)\un_{\mathcal{E}_{N,x,M}^\compl}]
\\
& \quad +2\left(\Var [T^{\mathcal{B}_M}(0,x)\un_{\mathcal{E}_{N,x,M}}]\Var[T(0,x)\un_{\mathcal{E}_{N,x,M}^\compl}]\right)^{1/2}.
\end{split}
\end{align}
And likewise,
\begin{align}\label{cov2}
\begin{split}
& \Var [T^{\mathcal{B}_M}(0,x)\un_{\mathcal{E}_{N,x,M}}]
\\
& \quad \leq \Var [T^{\mathcal{B}_M}(0,x)]+\Var [T^{\mathcal{B}_M}(0,x)\un_{\mathcal{E}_{N,x,M}^\compl}]
\\
& \quad +2\left(\Var [T^{\mathcal{B}_M}(0,x)]\Var[T^{\mathcal{B}_M}(0,x)\un_{\mathcal{E}_{N,x,M}^\compl}]\right)^{1/2}.
\end{split}
\end{align}
Now, using Lemma \ref{CM}, we get  constants $C_0$, $C_1$ such that for any $N$, for any $M\geq N^2$ and $x$ such that $|x|=N$,
\begin{align}
\begin{split}
\label{restriction}
&\Var [T^{\mathcal{B}_M}(0,x)]
\\
& \quad \leq \Var_{(\log N)^{1/\alpha+\varepsilon}}[T^{\mathcal{B}_M}(0,x)]
\\
& \quad +C_0 \max\left(M^4(\log M)^5 F((\log N)^{1/\alpha+\varepsilon})N^{-C_1\log N}\right).
\\
\end{split}
\end{align}
Further, recalling Definition \ref{pseudometric}, there exists a constant $C_\psi $ such that for any $N,M$, for any $x$ of norm $N$,
\begin{align*}
\begin{split}
& \Var [T(0,x)\un_{\mathcal{E}_{N,x,M}^\compl}]
\\
& \quad \leq C_\psi  \EE[\|f+\ell\|^2_{\infty,\mathcal{B}_{\Diam(\Gamma(0,x))}}\Diam^2(\Gamma(0,x))\un_{\mathcal{E}_{N,x,M}^\compl}]\\
& \quad\leq  C_\psi  \Big( \EE[\Diam^4(\Gamma(0,x))\un_{\|f+\ell\|_{\infty,\mathcal{B}_{\Diam(\Gamma(0,x))}}\leq \Diam(\Gamma(0,x))}\un_{\mathcal{E}_{N,x,M}^\compl}] 
\\
& \quad +  \EE\big[\|f+\ell\|^4\un_{\mathcal{E}_{N,x,M}^\compl}\un_{\|f+\ell\|_{\infty,\mathcal{B}_{\Diam(\Gamma(0,x))}}\geq \Diam(\Gamma(0,x))}\big] \Big)
\\
&\quad \leq C_\psi\Big(\int\limits_{s\in\RR_+}\PP\big[\Diam^4(\Gamma(0,x))\un_{\mathcal{E}_{N,x,M}^\compl}\geq s\big]ds
\\
&\quad  + \int\limits_{s\in\RR_+}\PP\big[\|f+\ell\|^4_{\infty,\mathcal{B}_{\Diam(\Gamma(0,x))}}\un_{\mathcal{E}_{N,x,M}^\compl}\un_{\|f+\ell\|_{\infty,\mathcal{B}_{\Diam(\Gamma(0,x))}}\geq \Diam(\Gamma(0,x))}\geq s\big]ds \Big).
\end{split}
\end{align*}
We notice that in both previous integrals, for $s>0$, we can bound the integrand by $\PP[\mathcal{E}_{N,x,M}^\compl]$. We do so for $s\in(0,M^4]$.
So that
\begin{align}
\begin{split}
\label{smallterm}
& \Var [T(0,x)\un_{\mathcal{E}_{N,x,M}^\compl}]
\\
&\quad \leq C_\psi\Big( 2M^4\PP[\mathcal{E}_{N,x,M}^\compl]+\int\limits_{s\geq M^4}\PP\big[\Diam^4(\Gamma(0,x))\un_{\mathcal{E}_{N,x,M}^\compl}\geq s\big]ds
\\
&\quad  + \int\limits_{s\geq M^4}\PP\big[\|f+\ell\|^4_{\infty,\mathcal{B}_{\Diam(\Gamma(0,x))}}\un_{\mathcal{E}_{N,x,M}^\compl}\un_{\|f+\ell\|_{\infty,\mathcal{B}_{\Diam(\Gamma(0,x))}}\geq \Diam(\Gamma(0,x))}\geq s\big]ds \Big).
\end{split}
\end{align}
Notice that 
$$\PP\Big[\|f+\ell\|^4_{\infty,\mathcal{B}_{\Diam(\Gamma(0,x))}}\un_{\mathcal{E}_{N,x,M}^\compl}\un_{\|f+\ell\|_{\infty,\mathcal{B}_{\Diam(\Gamma(0,x))}}\geq \Diam(\Gamma(0,x))}\geq s\Big]\leq \PP\big[\|f+\ell\|^4_{\infty,\mathcal{B}_{s}}\geq s\big].$$
So that, applying Proposition \ref{muirvansupnorm} in (\ref{smallterm}),  for any $N$, for any $M> N$, for any $x$ of norm $N$,
\begin{align}\label{smallterm2}
\begin{split}
& \Var [T(0,x)\un_{\mathcal{E}_{N,x,M}^\compl}]
\\
& \quad \leq C_\psi  \Big( 2M^4\PP[\mathcal{E}_{N,x,M}^\compl]+\int\limits_{s\geq M^4} \PP[\mathcal{E}_{N,x,\varepsilon,s^{1/4}}^\compl]ds+\int\limits_{s\geq M^4}e^{-s^{1/2}/c_0}ds\Big)
\\
& \quad
\leq C_\psi \Big( 2M^4\PP[\mathcal{E}_{N,x,M}^\compl]+\int\limits_{s\geq M} 4s^3\PP[\mathcal{E}_{N,x,\varepsilon,s}^\compl]ds+C_1 e^{-M}\Big),
\end{split}
\end{align}
for some constant $C_1$.
And similarly,
\begin{align}\label{smallterm3}
\begin{split}
& \Var [T^{\mathcal{B}_M}(0,x)\un_{\mathcal{E}_{N,x,M}^\compl}]
\\
& \quad \leq C_\psi M^2\int\limits_{s\in\RR_+}\PP\big[\|f+\ell\|^2_{\infty,\mathcal{B}_{\Diam(\Gamma(0,x))}}\un_{\mathcal{E}_{N,x,M}^\compl}\geq s\big]ds
\\
& \quad \leq C_\psi \Big( M^4\PP[\mathcal{E}_{N,x,M}^\compl]+C_1 e^{-M}\Big).
\end{split}
\end{align}
Now, by Lemma \ref{length2}, recalling (\ref{smallevent}), there exists a $\max(2d,4)$-sub-polynomial function $G$ such that, for any $N$ and $M\geq N^2$,
\begin{equation}\label{largediam}
\PP[\mathcal{E}_{N,x,M}^\compl]\leq G(M),
\end{equation}
where the function $G$ includes both the function $G$ of the lemma and the exponential term.
So that we return to (\ref{cov}), use (\ref{cov2}) to remove the indicator of $\mathcal{E}_{N,x,M}$ and then use (\ref{restriction}) to switch from the field $f$ to $f_{(\log N)^{1/\alpha+\varepsilon}}$, and finally use (\ref{smallterm2}), (\ref{smallterm3}) and (\ref{largediam}) to control the error terms. We get a constant $C_2$ such that for all $N\in\NN$, for all $x$ such that $|x|=N$ and $M\geq N^2$,
\begin{align*}
&\Var T(0,x) 
\\
& \quad \leq\Big[\Var_{(\log N)^{1/\alpha+\varepsilon}}[T^{\mathcal{B}_M}(0,x)]
\\
& \quad +C_2 \max\left(M^4(\log M)^5 F((\log N)^{1/\alpha+\varepsilon}),N^{-C_1\log N}\right)\Big]
\\
& \quad \times \Big[1+6C_\psi \Big(3M^4G(M)+\int\limits_{s\geq M}4s^3G(s)ds +C_1e^{-M}\Big)^{1/2}\Big].
\end{align*}
Take $M=N^2$ and, recall that $F$ is $\alpha$\emph{-sub-exponential} for some $\alpha>1$ and $G$ is $4$\emph{-sub-polynomial} (see Definition \ref{subpoly}). We then have, for any $N$ and $x$ such that $|x|=N$,
$$
\Var T(0,x) \leq \Var_{(\log N)^{1/\alpha+\varepsilon}}\Big[T^{\mathcal{B}_{N^2}}(0,x)\Big]\big(1+o(1)\big)+o(1),
$$
where $o(1)$ are sequences depending only on $N$ and going to $0$ as $N$ goes to infinity.
\end{proof}
\subsection{On Condition \ref{macrotime}}
To prove Proposition \ref{numberballs}, the key proposition in establishing that Condition \ref{macrotime} is verified, we need the following auxiliary results.
\begin{lem}
\label{expdecann}
Let $f$ be a Gaussian field on $\RR^d$ satisfying Assumption \ref{a:basic} for some $\alpha$-sub-exponential function $F$ with $\alpha>0$, as well as the positivity assumption, Assumption \ref{a:pos}. Let $\ell>-\ell_c$.
	Then, there exist positive constants $c,M_0$ such that 
	$$
	\forall M\geq M_0, \exists a>0  \text{ such that}\ 	\PP\left[\frac{T(A_M)}{M}\leq a\right]\leq e^{-cM}.
	$$
\end{lem}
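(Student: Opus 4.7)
The plan is to combine the sharp phase transition for subcritical Gaussian percolation with a block-independence argument. First, by Gaussian symmetry the cheap region $\{f+\ell \leq 0\}$ has the law of the excursion set $\mathcal{E}_{-\ell}$, which is subcritical since $-\ell < \ell_c$. By picking $\delta > 0$ small enough that $\ell-\delta > -\ell_c$, the enlarged cheap region $\{f+\ell \leq \delta\}$ is still subcritical; invoking the sharp phase transition of \cite{severo2021sharp}---whose polynomial-decay hypothesis is met under the $\alpha$-sub-exponential assumption---one gets $c_\delta, R_1>0$ such that for all $R \geq R_1$,
\[
\PP\bigl[\{f+\ell \leq \delta\}\ \text{has a connected component crossing}\ A_R\bigr]\leq e^{-c_\delta R}.
\]
On the complement of this event, any continuous path crossing $A_R$ must traverse a region on which $\psi(f+\ell) \geq \psi(\delta) > 0$, by monotonicity of $\psi$.

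Second, I would partition $A_M$ into $K \sim M/R_0$ disjoint concentric sub-annuli $B_1,\ldots,B_K$ of radial width $R_0$ for a large constant $R_0$, so that any crossing of $A_M$ restricts to a crossing of each $B_k$ and therefore $T(A_M) \geq \sum_k T_k$, where $T_k$ denotes the pseudo-distance between the inner and outer boundaries of $B_k$ with paths constrained to lie in $B_k$. The goal is to show that for $R_0$ large each $T_k$ satisfies $\PP[T_k \geq c R_0] \geq p_0$ for some constants $c, p_0>0$ uniform in $k$. This should follow by combining the linear-in-$R_0$ lower bound on $\EE[T_k]$ coming from positivity of the time constant (Theorem \ref{posBF}), the matching upper envelope $T_k \lesssim R_0 \log R_0$ with super-polynomially small exceptional probability coming from Corollary \ref{muirvansupnorm2}, and the step-one conclusion applied inside each $B_k$.

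Finally, to upgrade the per-block estimates to an exponential tail on the sum, I would work with the finite-correlation-range counterpart $f_r$ (Definition \ref{d:cutoff}) with $r$ chosen much smaller than $R_0$: under $\PP_r$ the events $\{T_k \geq c R_0\}$ indexed by non-adjacent $k$ are independent, so a Chernoff/Bernoulli large-deviation bound yields that at least a linear fraction of the blocks are \emph{good} except with probability at most $e^{-c' K}$; on that event $T(A_M) \geq a M$ for an appropriate $a>0$. The approximation error $\|f-f_r\|_{\infty,\mathcal{B}_M}$ is controlled by Proposition \ref{muirvanapprox} and is exponentially small in $R_0^\alpha$ by the $\alpha$-sub-exponential assumption, so choosing $R_0$ as a small power of $M$ absorbs this error into the final exponential bound.

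The principal obstacle lies in step two: sharp phase transition alone only gives $T_k > 0$, and extracting the \emph{linear} lower bound $T_k \geq c R_0$ with constant probability requires a genuinely quantitative use of the positivity of $\mu$; the shifted level $\delta>0$ and the monotonicity of $\psi$ are what turn the qualitative positivity into a lower bound proportional to the Euclidean length of the crossing. A secondary difficulty is the coupled choice of $R_0$ and $r$: $r$ must be small enough relative to $R_0$ to yield block independence, yet the resulting approximation error must remain negligible compared to $\psi(\delta)$.
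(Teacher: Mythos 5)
Your proposal is aimed at a much stronger statement than the lemma actually asserts, and the step that would carry the strengthening has a genuine gap. Note the quantifier order in Lemma \ref{expdecann}: for each $M$ there exists $a>0$, so $a$ is allowed to depend on $M$. The paper's proof is therefore soft: since $\psi(x)>0$ iff $x>0$, the event $\{T(A_M)=0\}$ forces the set $\{f+\ell\le 0\}$ to cross $A_M$, and by the symmetry $f\mapsto -f$ this set has the law of a subcritical excursion set (because $\ell>-\ell_c$), so \cite[Theorem 1.2]{severo2021sharp} bounds $\PP[T(A_M)=0]$ by $e^{-2cM}$; right-continuity of $a\mapsto \PP[T(A_M)/M\le a]$ at $a=0$ then produces, for each fixed $M$, some $a(M)>0$ with $\PP[T(A_M)/M\le a(M)]\le e^{-cM}$. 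The uniform-in-$M$ constant $a$ you are trying to construct is exactly the content of Proposition \ref{numberballs} (hence Condition \ref{macrotime}), and the paper obtains it not by block independence and Chernoff bounds but by feeding this weak lemma into the multiscale bootstrap inequality (\ref{strass}) of Lemma \ref{bootstrap}.

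The concrete gap is your step two, which you yourself flag but then wave away. Absence of a crossing of a block by $\{f+\ell\le\delta\}$ only forces a crossing path to meet the open set $\{f+\ell>\delta\}$; it does not force the path to spend Euclidean length of order $R_0$, or indeed any length bounded below, inside that set, since the blocking region can be arbitrarily thin along the path. So the shifted level together with monotonicity of $\psi$ yields no lower bound on $\int_\gamma\psi(f+\ell)$ proportional to the crossing length --- at best qualitative positivity. The fallback you sketch is also quantitatively insufficient: an expectation bound $\EE[T_k]\gtrsim R_0$ (which itself needs an annulus version of the time-constant positivity, not literally Theorem \ref{posBF}) combined with the $R_0\log R_0$ sup-norm envelope gives, via a Paley--Zygmund type argument, only $\PP[T_k\ge cR_0]\gtrsim 1/\log R_0$ rather than a constant $p_0$; after the Chernoff step this produces a lower bound of order $M/\log R_0$, and since you must let $R_0$ grow with $M$ to make the finite-range approximation error negligible, you do not recover $T(A_M)\ge aM$ with $a$ uniform. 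This is precisely the difficulty the paper circumvents by proving only the $M$-dependent-$a$ statement here and upgrading it to a uniform constant through the renormalization inequality of Lemma \ref{bootstrap}.
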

\begin{proof}
This follows readily from \cite[Theorem 1.2]{severo2021sharp} which gives exponential decay for annulus crossing probabilities, combined with right-continuity of the map $x\mapsto\PP[X\leq x]$ for any real-valued random variable $X$.
\end{proof}
\begin{lem}[\cite{dewangayet}, Proposition 4.4 and Corollary 5.9]
\label{bootstrap}
Let $f$ be a Gaussian field on $\RR^d$ satisfying  Assumption \ref{a:basic} for some $\alpha$-sub-exponential decay function $F$ with $1<\alpha<2$. Let $\ell\in\RR$. Then for any $1\leq Q<R<S$ and any positive constant $\delta$,
\begin{equation}\label{strass} 
\PP \left[\frac{T(A_S)}S< \frac{\delta}{1+\frac{Q}{R}}\right]\leq  
\left(c_d S^{d-1}\frac{R}{Q}\right)^n
\left(  \PP \left[\frac{T(A_R)}R< \delta \right]^{n}
+n Se^{-\frac12Q^{\alpha}}
\right),
\end{equation}
where $c_d>0$ is a constant depending only on the dimension $d$,  where 
 $ n=\lfloor N\frac{Q}{2R+2Q}\rfloor $  with $N= \lfloor \frac{S-1}{2R+Q}\rfloor $.
\end{lem}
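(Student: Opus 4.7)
The lemma is a Kesten-type multiscale renormalization inequality: it bootstraps the probability of a fast crossing of the large annulus $A_S$ from the corresponding probability on the smaller annulus $A_R$, exploiting that crossings of small annuli become nearly independent once separated by a buffer of width $Q$. My plan combines three ingredients: a finite-range approximation of $f$ by $f_Q$ via Proposition~\ref{muirvanapprox}, which buys genuine independence at distances $\geq Q$; a geometric tiling of $A_S$ by a candidate family of lattice-centred $R$-scale annuli; and a union bound over $n$-tuples of selected annuli.

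For the geometric step I slice $A_S$ along one coordinate axis into $N=\lfloor(S-1)/(2R+Q)\rfloor$ parallel slabs of thickness $2R+Q$. Within each slab, centring an annulus $A_R(y)$ at a lattice point $y$ in the central transverse hyperplane produces $O(S^{d-1})$ candidates, and consecutive slab choices are automatically $Q$-separated in Euclidean distance. Any continuous path from $S_1$ to $S_S$ crosses each slab, and therefore crosses some candidate annulus per slab. Restricting to a subfamily of $n=\lfloor NQ/(2R+2Q)\rfloor$ selected slabs, with one annulus per retained slab, yields $\binom{N}{n}\cdot (c_d S^{d-1})^n$ configurations; the binomial factor is bounded by $(eN/n)^n\leq (c_d'R/Q)^n$, producing the enumeration factor $(c_dS^{d-1}R/Q)^n$ in the statement. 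Under the finite-range field $f_Q$, the $n$ annuli of any fixed configuration have jointly independent crossing times.

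The key structural step, and the main obstacle, is to show that the event $\{T(A_S)/S<\delta/(1+Q/R)\}$ forces the existence of a configuration in which all $n$ selected annuli are crossed in time $<\delta R$ apiece. The intuition is that a realising geodesic's time budget $\delta RS/(R+Q)$ must be apportioned across its slab-crossings, each lower-bounded by $T(A_R(y_j))$, so a careful pigeonhole combined with the calibrated ratio $n/N=Q/(2R+2Q)$ yields enough fast slab-crossings to match the selected sub-family. Calibrating the slab thickness $2R+Q$, the thinning ratio, and the sharp threshold $\delta R/(R+Q)$ so that this counting closes without hidden multiplicative losses is the delicate heart of the argument. Finally, I pass between $\PP$ and $\PP_Q$ using Proposition~\ref{muirvanapprox} (or alternatively Proposition~\ref{CMMV} for the monotonic event $\{T(A_R)/R<\delta\}$), with the cut-off parameter $t\asymp Q^{\alpha/2}$ optimising $tF(Q)$ against $e^{-C_2t^2}$ in the $\alpha$-sub-exponential regime; this contributes the additive error $nSe^{-Q^\alpha/2}$, in which the $S$ reflects the enclosing ball size and the $n$ a union bound over the approximations around each retained annulus.
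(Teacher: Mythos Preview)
The paper does not prove this lemma at all: it is quoted verbatim from \cite{dewangayet} (their Proposition~4.4 and Corollary~5.9) and used as a black box in the proof of Proposition~\ref{numberballs}. There is therefore no proof in the present paper to compare your attempt against.

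That said, your sketch is broadly the right shape for this kind of renormalization inequality: finite-range approximation by $f_Q$ to manufacture independence at separation $Q$, a slab decomposition of $A_S$ into $N\approx S/(2R+Q)$ layers each hosting a lattice family of $O(S^{d-1})$ candidate $R$-annuli, and a union bound over $n$-tuples of selected annuli combined with independence. These are indeed the ingredients used in \cite{dewangayet}. Where your outline remains genuinely incomplete is exactly the step you flag as ``the delicate heart'': you have not shown why a path with total time below $\delta S/(1+Q/R)$ must force at least $n=\lfloor NQ/(2R+2Q)\rfloor$ of the slab-annuli to have time below $\delta R$. The naive pigeonhole (divide the time budget by $\delta R$ to bound the number of slow slabs) does not close with these constants as written, and the precise calibration of the threshold $\delta/(1+Q/R)$, the slab width $2R+Q$, and the thinning ratio $Q/(2R+2Q)$ is what makes the inequality work. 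Your sketch asserts this closes but does not verify it, so as a self-contained proof it has a gap; as a plan it is correct in spirit but would need the cited paper (or a reconstruction of its pigeonhole) to be made rigorous.
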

\bigskip 
Now on to the proof of Proposition \ref{numberballs}.
\begin{proof}Fix a Gaussian field $f$ verifying the assumptions.
Let $M_0$ and $c$ be the constants from Lemma \ref{expdecann}.
For all $k\in \NN$, and $M\in[M_0,M_0^2]$, let 
\begin{equation}\label{scales}
N_{M,k}:=M^{2^k}.
\end{equation}
Thus, note that for all $x\in [M_0,+\infty)$, there exist $M,k$ such that $x=N_{M,k}$.
Now, fix some $M\in[M_0,M_0^2].$ For any $k\geq 1$ and $\delta>0$, we have, by Lemma \ref{bootstrap} applied with $S=N_{M,k}$, $R=N_{M,k-1}$ and $Q=\sqrt{N_{M,k-1}}$ ($=N_{M,k-2}$ if $k\geq 2$),
\begin{align}\label{usebootstrap}
\begin{split}
&\PP\left[\frac{T(A_{N_{M,k}})}{N_{M,k}}< \frac{\delta}{1+\frac{1}{\sqrt{N_{M,k-1}}}}\right]
\\
& \leq \left(c_d N_{M,k}^{d-1}\sqrt{N_{M,k-1}}\right)^{\sqrt{N_{M,k-1}}}
\left(  \PP\left[\frac{T(A_{N_{M,k-1}})}{N_{M,k-1}}< \delta \right]^{\sqrt{N_{M,k-1}}}
+\sqrt{N_{M,k-1}} N_{M,k}e^{-\frac12N_{M,k-1}^{\alpha/2}}
\right).
\end{split}
\end{align}
Now, by Lemma \ref{expdecann}, there exists $\delta>0$ and $k_0$ such that for any $M\in[M_0,M_0^2],$
\begin{equation}\label{init}
\PP\left[\frac{T(A_{N_{M,k_0-1}})}{N_{M,k_0-1}}< \delta\right]\leq C_1 e^{-N_{M,k_0-1}^{\alpha/5}},
\end{equation}
Therefore, up to increasing $k_0$, we have for all $M\in[M_0,M_0^2]$,
\begin{align}\label{usebootstrap2}
\begin{split}
&\PP\left[\frac{T(A_{N_{M,k_0}})}{N_{M,k_0}}< \frac{\delta}{1+\frac{1}{\sqrt{N_{M,k_0-1}}}}\right]
\\
& \quad \leq \left(c_d N_{M,k_0}^{d-1}\sqrt{N_{M,k_0-1}}\right)^{\sqrt{N_{M,k_0-1}}}
\left( (C_1 e^{-N_{M,k_0-1}^{\alpha/5}})^{\sqrt{N_{M,k_0-1}}}
+\sqrt{N_{M,k_0-1}} N_{M,k_0}e^{-\frac12N_{M,k_0-1}^{\alpha/2}}
\right)
\\
&\quad \leq C_1 e^{-N_{M,k_0}^{\alpha/5}}.
\end{split}
\end{align}
Indeed, we have for all $M\in[M_0,M_0^2]$ and $k$ large enough
 $$\sqrt{N_{M,k-1}}-N_{M,k-1}^{\alpha/5}\sqrt{N_{M,k-1}}=N_{M,k-1}^{1/2}-N_{M,k-1}^{\alpha/5+1/2}<-N_{M,k}^{\alpha/5}, $$
and recalling that $\alpha>1$,
$$
\sqrt{N_{M,k-1}}-\frac12N_{M,k-1}^{\alpha/2}=\sqrt{N_{M,k-1}}-\frac12N_{M,k}^{\alpha/4}=N_{M,k}^{1/4}-\frac12N_{M,k}^{\alpha/4} < -N_{M,k}^{\alpha/5}.
$$
so that using (\ref{usebootstrap}) and repeating the reasoning of (\ref{usebootstrap2}), we get by induction that for any $k\geq k_0$, for any $M\in[M_0,M_0^2]$, 
$$
\PP\left[\frac{T(A_{N_{M,k}})}{N_{M,k}}< \delta_\infty\right]\leq C_1e^{-N_{M,k}^{\alpha/5}},
$$
where 
$$\delta_\infty:=\delta\prod\limits_{k=1}^\infty\frac{1}{1+\frac{1}{\sqrt{N_{M_0,k-1}}}}>0,$$
which is the conclusion of Proposition \ref{numberballs}.
\end{proof}

\section{Appendix}

\subsection{Cameron-Martin space}\label{p:CM}
Given a Gaussian field $f$, we introduce a Hilbert space $H$. It is constituted of elements of $\mathcal{C}(\RR^d)$, and called the \emph{Cameron-Martin space} of $f$. To define it, first define the Hilbert space $G$ to be the space of Gaussian random variables of the form
$$
\sum\limits_{i\in\NN}a_if(x_i),
$$
where the $a_i$ are in $\RR$ and the $x_i$ in $\RR^d$, and they satisfy
$$
\sum\limits_{i,j}a_ia_j\kappa(x_i,x_j)<\infty,
$$
$\kappa$ being the covariance kernel of $f$.
We further define the map $P$ from $G$ to $\mathcal{C}(\RR^d)$ by
$$
\xi\mapsto P(\xi)(.):=\EE[\xi f(.)].
$$
\begin{defn}[Cameron-Martin space]\label{d:cm}
The Cameron-Martin space $H$ of $f$ is then the set $P(G)$ equipped with the scalar product
$$
\langle h_1,h_2\rangle:=\EE[P^{-1}(h_1)P^{-1}(h_2)].
$$
\end{defn}
We now explain a construction which is used to exhibit elements of the Cameron-Martin space whose support contains large balls. Suppose that the field $f$ has a spectral density $\rho^2$. Then the Cameron-Martin space of $f$ can be equivalently described as the space 
$$
\tilde{H}=\mathcal{F}[g\rho], \quad\quad g\in L^2_{\text{sym}}(S),
$$
where $\mathcal{F}$ denotes the Fourier transform, $S$ is the support of $\rho$, $L^2_{\text{sym}}(S)$ is the set of complex Hermitian $L^2$ functions supported on $S$ and the inner product is the one associated with $L^2_{\text{sym}}(S)$.
We then have, for any $h\in H$ such that its Fourier transform $\hat{h}$ is defined,
$$
\|h\|_H^2=\int\limits_{\RR^d}\frac{|\hat{h}|^2(x)}{\rho^2(x)}dx.
$$
Using this description, if the field $f$ verifies Assumption \ref{a:basic}, in particular the spectral density assumption \ref{i:spectr},
one can establish the following:
\begin{equation}\label{normcontrol}
\|h\|_H\leq \frac{(\sup |\hat{h}|)\Vol(\Supp(\hat{h}))}{\inf\limits_{\Supp(\hat{h})}|\rho|}.
\end{equation}

The following are the key propositions used to establish the comparisons between the laws of Gaussian fields with close moving-average kernels.
\begin{prop}[Cameron-Martin theorem, see e.g \cite{janson_1997} Theorems 14.1 and 3.33]
\label{CMcontrol}
Let $f$ be a Gaussian field satisfying Assumption \ref{a:basic}. Let $h$ be an element of its Cameron-Martin space $H$. Let $X=P^{-1}(h)$ (see Definition \ref{d:cm}). Then the Radon-Nikodym derivative of the law of $f+h$ with respect to that of $f$ is $\exp[X-\frac12 \EE[X^2]]$. Otherwise stated, if $A$ is an event, then
$$
|\PP[f\in A]-\PP[f+h\in A]|= \left|\EE_f\left[Q(h)\un_A\right]\right|,
$$
where $$Q(h):=1-\exp[X-\frac12 \EE[X^2]].$$
\end{prop}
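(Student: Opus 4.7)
The plan is to prove this by reducing to a finite-dimensional Cameron-Martin identity, extending to general $h\in H$ by approximation, and finally deriving the stated identity for events as a corollary of the Radon-Nikodym formula. This is the classical Cameron-Martin theorem, so the strategy is standard; I will describe how I would organize it.

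First, I would handle the case where $X=P^{-1}(h)$ takes the finite form $X=\sum_{i=1}^n a_i f(x_i)$. Setting $Y:=(f(x_1),\dots,f(x_n))$ and $\Sigma_{ij}:=\kappa(x_i,x_j)$, the defining property $h(y)=\EE[X f(y)]$ gives $h(x_j)=(\Sigma a)_j$, so on the cylinder generated by $(f(x_1),\dots,f(x_n))$ the law of $f+h$ is the law of $Y$ translated by $\Sigma a$. A direct computation with the Gaussian density on $\RR^n$, via the linear change of variable $y\mapsto y-\Sigma a$, yields for any bounded cylinder functional $F$
$$
\EE[F(f+h)] = \EE\bigl[F(f)\,e^{X-\tfrac12\EE[X^2]}\bigr],
$$
which is the finite-dimensional identity.

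Next, I would extend to general $h\in H$ by approximation. By definition of $G$, the random variable $X$ is the $L^2$-limit of finite sums $X_k$ as above; set $h_k:=P(X_k)$, so $h_k\to h$ in $H$. Since centered Gaussian random variables have exponential moments of every order, the exponential martingales $e^{X_k-\tfrac12\EE[X_k^2]}$ converge in $L^p$ for every $p$ to $e^{X-\tfrac12\EE[X^2]}$. Passing to the limit in the cylinder identity and then using a monotone class argument gives the Radon-Nikodym formula on the full $\sigma$-algebra generated by $f$. The identity for events is then immediate: with $F=\un_A$,
$$
\PP[f+h\in A]-\PP[f\in A] = \EE\bigl[\un_A(e^{X-\tfrac12\EE[X^2]}-1)\bigr] = -\EE[Q(h)\un_A],
$$
and the claimed absolute-value form follows.

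The main technical obstacle is the passage from cylinder functionals to general measurable ones, which requires the $L^p$-convergence of the exponential martingales together with a monotone class argument; both are standard once the finite-dimensional base case is in hand, but they require a careful setup of the underlying probability space (in the moving-average representation $f=q\star W$ of Assumption \ref{a:basic}, this is conveniently handled using the white-noise framework). Everything else reduces to routine Gaussian calculus.
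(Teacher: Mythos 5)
Your proposal is correct in outline, but note that the paper does not prove this proposition at all: it is quoted as a known result, with the proof delegated to Janson (Theorems 14.1 and 3.33 in \cite{janson_1997}). What you wrote is essentially the standard proof that those references formalize — finite-dimensional translation identity for Gaussian vectors, extension to general $h\in H$ by $L^2$-approximation of $X=P^{-1}(h)$ together with $L^p$-convergence of the exponential martingales $e^{X_k-\frac12\EE[X_k^2]}$ (which holds since $\EE[e^{pX_k}]=e^{p^2\EE[X_k^2]/2}$ is uniformly bounded), and a monotone class argument — so relative to the paper your contribution is simply to supply a self-contained argument where the paper relies on a citation. Two small points deserve slightly more care than your sketch gives them. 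First, your finite-dimensional computation as written establishes the identity only for cylinder functionals depending on the same points $x_1,\dots,x_n$ appearing in $X$; for the monotone class step you need it for cylinder functionals over arbitrary finite point sets $y_1,\dots,y_m$, which you get by adjoining those points to the Gaussian vector and redoing the same density (or characteristic-function, if the enlarged covariance is singular) computation, the density ratio still being $e^{X-\frac12\EE[X^2]}$. Second, in passing to the limit on the left-hand side $\EE[F(f+h_k)]\to\EE[F(f+h)]$ you implicitly use pointwise convergence of $h_k$ to $h$; this does follow from $H$-convergence, since $|h_k(y)-h(y)|=|\EE[(X_k-X)f(y)]|\leq\|X_k-X\|_{L^2}\,\kappa(y,y)^{1/2}$, but it should be said. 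With these details filled in, your argument is a complete and correct proof of the cited theorem, and the final identity for events, $\PP[f+h\in A]-\PP[f\in A]=-\EE_f[Q(h)\un_A]$, follows exactly as you state.
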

We will call $Q(h)$ the \emph{Radon-Nikodym difference} associated to $h$.
\begin{prop}[\cite{muirhead2018sharp}, proof of Proposition 3.6 and Corollary 3.10]
\label{CMcontrol2}
There exists a universal constant $c>0$ such that for any Gaussian field $f$, for any element $h$ of its Cameron-Martin space verifying $\|h\|_H\leq c$, its Radon-Nikodym difference $Q(h)$ verifies:
$$
\EE_f\left[Q(h)^2\right]\leq  \frac{\|h\|_H^2}{\log 2},
$$
$\|\|_H$ being as in Definition \ref{d:cm}.
Further, if 
$$N_0:=\inf \{N\in\NN,\quad \inf\limits_{\mathcal{B}_{\frac1N}} \rho\geq \frac{\rho(0)}{2}\},$$
then for any $N\geq N_0$, there is an element $h$ of the Cameron-Martin space verifying $|h|\geq 1$ on $\mathcal{B}_N$ such that
$$
\|h\|_H\leq \frac{C_0 N}{\int\limits_{\RR^d} q},
$$
where $C_0$ is a universal constant.

\end{prop}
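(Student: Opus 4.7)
The statement consists of two independent claims, so I would tackle them in turn.

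For the moment bound on $Q(h)$, I would apply the Cameron--Martin formula of Proposition \ref{CMcontrol} to write $Q(h) = 1 - \exp\bigl(X - \tfrac12\|h\|_H^2\bigr)$ where $X := P^{-1}(h)$ is a centered Gaussian of variance $\sigma^2 := \|h\|_H^2$. Expanding the square and using the Gaussian moment generating function $\EE[e^{\lambda X}] = e^{\lambda^2 \sigma^2/2}$, one computes
\[
\EE[Q(h)^2] = 1 - 2\EE[e^{X - \sigma^2/2}] + \EE[e^{2X - \sigma^2}] = 1 - 2 + e^{\sigma^2} = e^{\sigma^2} - 1.
\]
The elementary inequality $e^x - 1 \leq x/\log 2$ holds throughout $[0, \log 2]$, since both sides are convex and agree at the two endpoints $0$ and $\log 2$. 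Choosing $c := \sqrt{\log 2}$ then gives $\sigma^2 \leq \log 2$ whenever $\|h\|_H \leq c$, and the desired bound $\EE[Q(h)^2] \leq \|h\|_H^2/\log 2$ follows.

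For the existence of a suitable $h$, I would use the spatial representation of the Cameron--Martin space: for any $g \in L^2(\RR^d)$, the convolution $q\star g$ lies in $H$ with $\|q\star g\|_H = \|g\|_{L^2}$, since $P^{-1}(q\star g) = \int g\, dW$ has variance $\|g\|_{L^2}^2$. First pick $L > 0$ depending only on $q$ so that $\int_{\mathcal{B}_L^\compl}|q| \leq \tfrac12 \int q$, which is possible by Assumption \ref{i:reg} ($q\in L^1$) combined with Assumption \ref{i:spectr} ($\int q > 0$). Then set $g := \tfrac{2}{\int q}\un_{\mathcal{B}_{N+L}}$. For $|x|\leq N$, the inclusion $\mathcal{B}_L \subseteq \mathcal{B}_{N+L} - x$ yields
\[
(q\star g)(x) = \frac{2}{\int q}\int_{\mathcal{B}_{N+L}-x} q \geq \frac{2}{\int q}\left(\int q - \int_{\mathcal{B}_L^\compl}|q|\right) \geq 1,
\]
so $h := q\star g$ satisfies $h \geq 1$ on $\mathcal{B}_N$, with $\|h\|_H = \|g\|_{L^2} \leq C(N+L)^{d/2}/\int q$.

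The delicate point is the $N$-dependence of the norm bound: the spatial recipe above yields $\|h\|_H = O(N^{d/2}/\int q)$, which matches the stated $C_0 N/\int q$ exactly in dimension two, the setting of the cited Muirhead--Vanneuville paper. A more principled route, which also explains the precise appearance of $N_0$, proceeds through the spectral representation $\hat h = g \rho$ with $\|h\|_H^2 = \int |\hat h|^2/\rho^2$: one takes $\hat h$ a scaled real symmetric bump supported on $\mathcal{B}_{1/N}$, where $\rho \geq \rho(0)/2$ by the very definition of $N_0$. The pointwise estimate $\cos(x\cdot\xi) \geq 1/2$ valid on $|x\cdot\xi|\leq 1$ forces $h \geq 1$ on $\mathcal{B}_N$, and the spectral integral is controlled via $\rho(0) = |\hat q(0)| = \int q$. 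In either approach, the hard step is the classical uncertainty-principle tradeoff between spatial localization of $\hat h$ at Fourier scale $1/N$ and a uniform pointwise lower bound on $h$ over a ball of radius $N$.
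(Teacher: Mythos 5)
Your first half is correct and is exactly the cited computation: writing $Q(h)=1-\exp\bigl(X-\tfrac12\sigma^2\bigr)$ with $X=P^{-1}(h)$ centered Gaussian of variance $\sigma^2=\|h\|_H^2$, the moment generating function gives $\EE[Q(h)^2]=e^{\sigma^2}-1$, and bounding the convex function $e^s-1$ by its chord $s/\log 2$ on $[0,\log 2]$ yields the claim with $c=\sqrt{\log 2}$. For the second half, note that the paper does not reprove the statement either; its entire argument is the remark following the proposition, namely the spectral route: take $\hat h$ supported in $\mathcal{B}_{1/N}$, where $\rho\geq\rho(0)/2$ by the definition of $N_0$, apply (\ref{normcontrol}), and use $\rho(0)=\int q$. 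Your spatial construction $h=q\star g$ with $g=\tfrac{2}{\int q}\un_{\mathcal{B}_{N+L}}$ is a legitimate, arguably more elementary, alternative (with the small caveat that in general one only has $\|q\star g\|_H\leq\|g\|_{L^2}$, not equality, which is all you need), and your observation about the exponent is exactly the right thing to notice.

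The weak point is where you stop. The ``uncertainty-principle tradeoff'' you defer as the hard step is in fact a two-line computation, and carrying it out shows that the spectral route does not rescue the stated $N$-dependence: with $\hat h=A\un_{\mathcal{B}_{1/(CN)}}$ (suitably symmetrized) normalized so that $\int\hat h=2$, the condition $\cos(x\cdot\xi)\geq 1/2$ gives $h\geq 1$ on $\mathcal{B}_N$, while $\|h\|_H^2\leq A^2\Vol(\Supp\hat h)/(\rho(0)/2)^2$ with $A\asymp N^d$ and $\Vol(\Supp\hat h)\asymp N^{-d}$, i.e. $\|h\|_H\lesssim N^{d/2}/\int q$ --- the same scaling as your spatial route. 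Moreover this scaling is optimal: since $\rho\leq\|q\|_{L^1}$ pointwise, any $h$ with $|h|\geq 1$ on $\mathcal{B}_N$ satisfies $\|h\|_H\geq \|h\|_{L^2}/\|q\|_{L^1}\gtrsim N^{d/2}/\|q\|_{L^1}$, so no construction can achieve $C_0N/\int q$ with a universal $C_0$ when $d\geq 3$. In other words, the bound as printed is the planar statement of Muirhead--Vanneuville (where $N^{d/2}=N$), and in general dimension it should read $C_0N^{d/2}/\int q$ (harmless for its later use in Lemma \ref{Cs} and Proposition \ref{variancesfields}, where the error terms absorb polynomial factors); the paper's displayed bound (\ref{normcontrol}), with $\Vol(\Supp\hat h)$ in place of $\Vol(\Supp\hat h)^{1/2}$, is itself slightly off and may be the source of the discrepancy. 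So for $d=2$ your argument is complete once you finish either construction, but presenting the spectral route as the ``more principled'' way to recover the stated bound, and leaving its key estimate unverified, leaves the one genuinely delicate issue --- the dimension dependence you yourself flagged --- unresolved.
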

\begin{rem}
The latter estimate follows from equation (\ref{normcontrol}), by considering functions $g\in L^2_{\textup{sym}}(S)$ with support on small annuli, and recalling that $q\star q=\kappa(0,.)$ so that $\int\limits_{\RR^d} q=\rho(0)$.
\end{rem}

\bigskip

\subsection{White-noise and subsets}\label{p:WN}
We state a few facts about convolution with the white-noise on $\RR^d$, see \cite{dewan2021upper}, Appendix A for more details.
\begin{defn}
Let $(\varphi_i)_{i\in\NN}$ be a Hilbertian basis of $L^2(\RR^d)$, and $(Z_i)_{i\in\NN}$ be an i.i.d sequence of centered Gaussian random variables of mean $1$. For any $L^2$ map $q$, define
$$
q\star W:=\lim\limits_{n\to\infty}\sum\limits_{i=1}^n q\star  Z_i \varphi_i,
$$
where the limit is that of convergence in law with respect to $\mathcal{C}^0$ topology on compact sets.
\end{defn}
The limit law in this convergence is independent from the Hilbertain basis we have chosen.
Now, if $(S_i)_{i\in\NN}$ is a family of compact sets intersecting only on their boundaries (which have 0 Lebesgue measure), and covering the whole space $\RR^d$, we can define $q\star (W|_{S_i})$ for any $i$ in the same way, with the $\varphi_j$ being this time elements of a Hilbertian basis of $L^2(S_i)$.
The $q\star (W|_{S_i})$ can thus be taken independent to one another and in that case, it is easy to see that the following holds.
\begin{prop}\label{splitboxes}
We have the following equality in law, for any such family $(S_i)_{i\in\NN}$ of $\RR^d$ of compact sets, for any $q\in L^2$.
\begin{equation}\label{wnsplit}
q\star W = \sum\limits_{i=1}^\infty q\star (W|_{S_i}).
\end{equation}
\end{prop}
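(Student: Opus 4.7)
The plan is to exploit the fact, recalled in section \ref{p:WN}, that the law of $q\star W$ does not depend on the choice of Hilbertian basis of $L^2(\RR^d)$, and to pick a basis adapted to the partition $(S_i)_i$. Since the $S_i$ cover $\RR^d$ and pairwise intersect only on sets of Lebesgue measure zero, one has the orthogonal Hilbert sum decomposition $L^2(\RR^d)=\bigoplus_{i\in\NN}L^2(S_i)$. I would pick a Hilbertian basis $(\varphi_j^{(i)})_{j\in\NN}$ of each $L^2(S_i)$ and assemble them into a basis of $L^2(\RR^d)$, then let $(Z_j^{(i)})_{i,j\in\NN}$ be an iid family of standard Gaussians indexed by this basis. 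Applied with this basis, the definition of $q\star W$ gives
$$q\star W=\lim_{n\to\infty}\sum_{\substack{i,j\in\NN\\ i+j\leq n}}Z_j^{(i)}(q\star\varphi_j^{(i)}),$$
as a limit in law for the $\mathcal{C}^0$ topology on compacts, while applied to each $L^2(S_i)$ with the basis $(\varphi_j^{(i)})_j$ and the independent Gaussians $(Z_j^{(i)})_j$, the same definition gives
$$q\star(W|_{S_i})=\lim_{m\to\infty}\sum_{j\leq m}Z_j^{(i)}(q\star\varphi_j^{(i)}),$$
the resulting fields $(q\star(W|_{S_i}))_i$ being independent since they are built from disjoint blocks of iid Gaussians.

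It will then remain to show that the rearranged series $\sum_i q\star(W|_{S_i})$ converges in the $\mathcal{C}^0$ topology on compacts and has the same law as $q\star W$. For convergence I would proceed pointwise first: for fixed $x\in\RR^d$, each $q\star(W|_{S_i})(x)$ is centered Gaussian of variance $\int_{S_i}q(x-u)^2\,du$, so by independence the variance of the partial sums equals $\sum_{i\leq n}\int_{S_i}q(x-u)^2\,du$, which increases to $\|q\|_{L^2(\RR^d)}^2<\infty$. This gives pointwise $L^2$-convergence; tightness for the $\mathcal{C}^0$ topology on a fixed compact would then be obtained by the same regularity estimates on $q$ that make the defining series of $q\star W$ itself converge (Assumption \ref{a:basic} (\ref{i:reg}) and (\ref{i:decay})), applied uniformly to the partial sums.

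To identify the limit with $q\star W$, I would match covariances. Both sides are centered Gaussian fields, so it suffices to compute
$$\cov\Big(\sum_i q\star(W|_{S_i})(x),\ \sum_j q\star(W|_{S_j})(y)\Big)=\sum_i\int_{S_i}q(x-u)q(y-u)\,du=\int_{\RR^d}q(x-u)q(y-u)\,du,$$
the first equality by independence across $i$ and the second because the $S_i$ cover $\RR^d$ up to a null set. The right-hand side is the covariance of $q\star W$, so the two fields have the same finite-dimensional distributions, hence the same law as continuous Gaussian fields on $\RR^d$.

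The main obstacle will be the second step: upgrading pointwise $L^2$-convergence of the independent Gaussian series to convergence in law in the $\mathcal{C}^0$ topology on compacts. The covariance computation and the basis decomposition are essentially bookkeeping, but tightness requires absorbing the contributions of pieces $S_i$ far from the compact of interest using the $L^2$ decay of $q$ guaranteed by Assumption \ref{a:basic}, so that the tail of the series is uniformly small on the compact. Once this tightness is in place, the three steps above combine to yield the equality in law claimed in Proposition \ref{splitboxes}.
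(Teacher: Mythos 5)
Your argument is correct and is exactly the one the paper intends (the paper only remarks that the identity is "easy to see" after noting that a Hilbertian basis of $L^2(\RR^d)$ can be assembled from bases of the $L^2(S_i)$ and that the law of $q\star W$ does not depend on the basis). Your additional covariance computation and tightness discussion simply fill in the details of this same basis-adapted decomposition.
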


\bibliography{mybib}{}
\bibliographystyle{amsplain}
\end{document}